\theoremstyle{plain}
\newtheorem{theorem}{Theorem}[section]
\newtheorem{corollary}[theorem]{Corollary}
\newtheorem{lemma}[theorem]{Lemma}
\theoremstyle{definition}
\theoremstyle{remark}
\newtheorem{rem}{Remark}[section]
\newcommand{\ba}{\begin{eqnarray}}
\newcommand{\be}{\begin{equation}}
\newcommand{\ea}{\end{eqnarray}}
\newcommand{\ee}{\end{equation}}
\newcommand{\benn}{\begin{equation*}}
\newcommand{\eenn}{\end{equation*}}
\def\ve{\varepsilon}
\def\im{\mbox{Im}}
\numberwithin{equation}{section}
\title{On  linear water wave problem in the presence of a critically submerged body}
\author{I.V. Kamotski, V.G. Maz'ya}
\address  {Department of Mathematics, 
University College London, Gower Street, London, WC1E 6BT 
}
\email{i.kamotski@ucl.ac.uk}
\address{Department of Mathematics, Link\"{o}ping University,
  SE-581 83 Link\"{o}ping, Sweden}
\email{vlmaz@mai.liu.se }
\begin{document}

\maketitle

\bigskip

\begin{abstract}

We  study the problem of propagation of linear water waves in a deep water in the presence of a critically submerged body (i.e. the body touching the water surface). Assuming uniqueness of the solution in the energy space, we prove the existence of the solution which satisfies the radiation conditions at infinity as well as, additionally, at the cusp point where the body  touches the water surface. This solution is obtained by the limiting absorption procedure.

Next we introduce a relevant scattering matrix and analyse  its properties. Under a geometric condition introduced by Maz'ya, see \cite{M1}, we show that the method of multipliers applies to cusp singularities, thus proving a new important property of the scattering matrix, which may be interpreted  as the absence of a version of ``full internal reflection".  This property also allows us to  prove uniqueness and existence of the solution in the functional spaces  $H^2_{loc}\cap L^\infty $ and $H^2_{loc}\cap L^p $,  $2<p<6$,  provided  a spectral parameter in the boundary conditions on the surface of the water is large enough.

 This description of the solution does not rely on the radiation conditions or the limiting absorption principle.
 This is   the first result of this type known to us in the theory of linear wave problems in unbounded domains.

\end{abstract}

\bigskip
{\bf  Keywords:} Water waves, limiting absorption principle, radiation conditions, uniqueness, domains with cusps.

\newpage
\section{Introduction.}

We study the problem of propagation of linear water waves in a
domain $\Omega$, which represents  water of infinite depth in the
presence of a critically submerged body $\widetilde{\Omega}$. Let us
describe the domain $\Omega$. We fix  a Cartesian system
$x=(x_1,x_2)$ with the origin $O$ and consider a bounded domain
$\widetilde{\Omega}\subset
\mathbb{R}^2_+=\{(x_1,x_2)\in\mathbb{R}^2:x_2>0\}$ (as usual in the
water wave theory, we assume that the  axis $x_2$ points downwards).
We assume that $S:=\partial \widetilde{\Omega}$ is smooth and
touches the water surface $\Gamma:=\{x_2=0\}$ only at the origin
$O$. Further we define $\Omega:=\mathbb{R}^2_+\setminus
\overline{\widetilde{\Omega}}$ and set $\Omega_\tau:=\Omega\cap
\{|x_1|<\tau, \,\, x_2<\tau\}$, where $\tau$ is a small positive
number. We assume that $\Omega_\tau$ coincides with the set \be
\{x:|x_1|<\tau, \ 0<x_2<\phi(x_1)\} ,\ee where $\phi$ is a function
from $C^2[-\tau,\tau]$, such that \be \phi(0)=\phi'(0)=0 ,
\label{ass} \ee and \be \kappa:=\phi''(0)>0 \ . \ee Moreover, let
 $\phi$ be strongly decreasing on $(-\tau,0)$ and strongly increasing on $(0,\tau)$.
 The governing equations are the following:
\be
    \Delta u=f ,\ \text{in}\  \Omega, \label{w1}
\ee \be
    \partial_n u=g_1, \ \text{on}\   S, \label{w2}
\ee \be
    \partial_n u-\nu u=g_2,\ \text{on}\    \Gamma, \label{w3}
\ee where $n$ is the external normal to $\Omega$, $\nu>0$ is a fixed spectral parameter and $f,g_1,g_2$ are  given
functions.

The linear water waves problems for fully submerged bodies in deep water (i.e. when the  body does not touch the water surface) had been studied extensively, see e.g. \cite{J0}-\cite{M1}
 ( see also \cite{KMV}, for more references).
The presence of a critically submerged body implies that
  the domain $\Omega$ contains two { \it external cusps}.
The problems in domains with cusps were studied from various points of view in   \cite{MP}-\cite{NT20} (see also \cite{MS}, where more references can be found).

Our main condition on $\Omega$ is:

\textbf{Condition 1.} Homogeneous problem \eqref{w1}-\eqref{w3} does
not have non-trivial solutions in the energy space $V(\Omega)=\{u:
\int_\Omega |\nabla u|^2dx+\int_{\partial\Omega}
|u|^2ds<+\infty\}$.

This condition indeed holds for many fully submerged bodies. For example, it is well known, see  \cite{M1}, \cite{KMV}, that the following geometric condition implies the uniqueness for  fully submerged bodies:

\textbf{Condition 2.} Let $n(x)=(n_1(x),n_2(x))$ be the unit  normal to $S$, external to $\Omega$. Then we have
\be\label{Mazusl} x_1(x_1^2-x_2^2)n_1(x)+2x_1^2x_2n_2(x)\geq 0  ,\ \ \ x\in S.
\ee

One of the results of this paper is that Condition 2 still implies uniqueness for the case of critically submerged bodies; in fact we can say more, see Theorems \ref{uniq}, \ref{uniq1} and \ref{uniq2}.

We are interested in  existence of  solutions which satisfy
an  {\it outgoing radiation condition at  infinity} (see \eqref{radcondinf}
below, for the precise definition):
\be \label{waves12} u\sim d^+ e^{-\ i\nu x_1-\nu
x_2},\,\,\, \text{as}\ \  x_1\rightarrow +\infty,\ \text{and } \,\,\,\,\,u\sim d^- e^{ i\nu
x_1-\nu x_2},\,\,\, \text{as}\ \  x_1\rightarrow -\infty,
 \ee
where $d^+$ and $d^-$ are some constants.

If $\widetilde{\Omega}$ is completely submerged (i.e. there is no
cusp, $\overline{\widetilde{\Omega}}\in\mathbb{R}^2_+$ ) the
existence of a solution to \eqref{w1}-\eqref{w3} satisfying
radiation conditions at infinity follows immediately, under the
Condition 1, see \cite{KMV} and references therein. Our situation
is more subtle, because of two reasons: the first reason is purely
technical, namely we cannot directly apply the method of \cite{KMV}
which was based on integral equations, due to the presence of the
cusps.

 The other reason is that, depending on the parameter $\nu$,
 the solutions may be {\it not in }$H^1_{loc}(\overline{\Omega})$. The
situation is in fact even more complicated: there may be  many ``reasonable"
solutions and so we need to select only one. The latter implies that
we need to  additionally employ new
 { \it radiation conditions at the cusp}.
To be more precise, we prove, under suitable conditions on
$f,g_1,g_2$ and assuming  Condition 1, that there is a
unique solution to \eqref{w1}-\eqref{w3} satisfying radiation
conditions at infinity and such that, provided $\nu>\kappa/8$,
    \be
    u\sim
    c_1x_1^{-1/2+i\sqrt{\frac{2\nu}{\kappa}-\frac{1}{4}}},\,\, x_1\rightarrow +0,
    \,\,\,\,\,\,\,
    u\sim
    c_2|x_1|^{-1/2+i\sqrt{\frac{2\nu}{\kappa}-\frac{1}{4}}},\,\, x_1\rightarrow-0.
    \label{j1}
    \ee
    In the case
$\nu<\kappa/8$ we have
    \be
    u\sim
    c_1x_1^{-1/2+\sqrt{\frac{1}{4}-\frac{2\nu}{\kappa}}},\,\, x_1\rightarrow +0,
    \,\,\,\,\,\,\,
    u\sim
    c_2|x_1|^{-1/2+\sqrt{\frac{1}{4}-\frac{2\nu}{\kappa}}},\,\,
    x_1\rightarrow-0.\label{j2}
    \ee
In the above formulae $c_1$ and $c_2$ are some constants. For the case $\nu=\kappa/8$ we have the same expressions as in \eqref{j1} but Condition 1 needs to be modified, see Condition 1$'$ in Section 3.

Let us mention that the radiation conditions for the water wave problems in the finite geometry have been studied in \cite{N10} and \cite{KaM}.

The presence of radiation conditions both at  infinity and at the origin presents  new challenges. In particular, we need to employ  to this end a non-standard version of {\it
limiting absorption principle}, cf. e.g. \cite{T}.

 Asymptotic representations \eqref{j1}  and \eqref{j2} show, in
particular, that if $\nu<\kappa/8$ then the solution is in
the space $H^1_{loc}(\overline{\Omega})$. In the case $\nu\geq \kappa/8$
the solution does not belong to $H^1_{loc}(\overline{\Omega})$, in general.
Moreover,  in the latter case there are many solutions with similar
type of behaviour, we show however the condition  \eqref{j1} fixes the unique
one.

Expressions \eqref{waves12} and \eqref{j1} can be   interpreted as ``outgoing waves'', and their complex conjugates as ``incoming waves''. This introduces, for $\nu>\kappa/8$ , a $4\times 4$ { \it scattering matrix}, describing the relation between the incoming and outgoing solutions at both infinities but also, at two cusps.

We study properties of this scattering matrix and show that, apart from the standard ones of unitarity and symmetry, it has more subtle ``block properties'', see Theorem 4.3.
The latter ensures in particular that any combination of waves incoming from the infinities will at least partially ``scatter in the cusps'' (and visa versa). This may be interpreted as the absence of an analogue of full internal reflection (i.e. of  ``infinity to infinity'' or of ``cusps to cusps'' scattering).

The crucial ingredient for establishing the above properties of the scattering matrix is the uniqueness Theorem 4.4. roughly in the class of arbitrary combinations of the cusp incoming and outgoing waves. (In fact in the class of functions with arbitrary inverse polynomial growth at the cusp). We prove this by showing that, under Condition 2, the method of multipliers  see e.g. \cite{M1}, \cite{KMV}, surprisingly, works also in the presence of functions singular at the cusp.

Moreover, the properties of the scattering matrix allow us to establish
the uniqueness and existence results for the problem \eqref{w1}-\eqref{w3} in  various  functional spaces.
We prove that if $\nu>\kappa/8$, and $f, g$ are regular enough and have a compact support, then there exists a unique solution of  problem \eqref{w1}-\eqref{w3}
in the space $ H^2_{loc}(\overline{\Omega})\cap L^\infty(\Omega)$. Under the same conditions we also establish the existence and uniqueness in the space $ H^2_{loc}(\overline{\Omega}\setminus O)\cap L^p(\Omega),\ \ p\in (2,6) $. The former may be interpreted as a solution with no waves either incoming or outgoing to the cusps (hence bounded), and the latter with no similar waves either from or to infinity (hence localised solution in some sense).
In particular these   spaces of functions do not differentiate between the incoming and the outgoing waves and the radiation conditions are not employed anymore.

The paper is organised as follows: in Section 2 we consider
the problem without a submerged body and derive some useful estimates
which are employed in Section 3. There we prove the existence of the
solution of \eqref{w1}-\eqref{w3} in the space of  functions with the radiation conditions, using the
limiting absorption principle. In the last section we introduce the
 scattering matrix for the problem \eqref{w1}-\eqref{w3},
prove some of  its properties and establish  the uniqueness and existence results for the problem \eqref{w1}-\eqref{w3} in  various spaces of functions without radiation conditions.


\section{Problem in $\mathbb{R}_+^2$.} Here we consider an
auxiliary problem in the entire half-space:
  \be
    \Delta u=f ,\ \text{in}\  \mathbb{R}_+^2, \label{aa10}
\ee \be
    \partial_n u-\nu u=0,\ \text{on}\    \Gamma. \label{aa30}
\ee We are interested in the solutions which satisfy the following
{\it radiation condition at  infinity}: $u$ can be represented as a
sum of two outgoing waves and of a function  decaying at infinity.
To make this more precise we define the \textit{outgoing waves at  infinity}, e.g.:
 \be \label{qwe}u_1^-(x)= \chi(x_1)e^{-i\nu x_1-\nu x_2},\ \ u_2^-(x)= \chi(-x_1)e^{i\nu x_1-\nu x_2},
 \ee
where $\chi$ is the cut-off function, such that \be\label{chi}
 \chi\in C^\infty(\mathbb{R}), \ \ \chi(t)=0\  \text{ for}\  t<N,\ \,
 \chi(t)=1\  \text{for}\  t>2N,
\ee and $N$ is a fixed positive number. (Physically, function
$u_1^-$ represents an outgoing wave  moving to the right,
respectively  the outgoing wave $u_2^-$ moves to the left.) Then we
say that $u$ satisfies {\it radiation condition at   infinity}, see
e.g. \cite{KMV}, if \be u=c_1u_1^- +c_2u_2^- +\tilde{u}, \text{in}\
\Omega\setminus B_N,\ \ |\tilde{u}|+|x||\nabla
\tilde{u}|=O(|x|^{-1}) ,\  \text{as}\  |x|\rightarrow \infty,
\label{radcondinf}\ee where $c_1$ and $c_2$ are some constants and $B_N=\{x: \|x\|<N\}$.

The existence of a solution  which satisfies radiation conditions
(under certain assumptions on $f$) is well-known, see e.g.
\cite{KMV}. Below we discuss  relation of this solution to the
limiting absorption principle and derive some useful estimates which
we will apply in the next section.

Consider now the problem with a small absorption described by
$\ve>0$: \be
    \Delta u_\ve-i\ve u_\ve=f ,\ \text{in}\  \mathbb{R}_+^2, \label{aa1}
\ee \be
    \partial_n u_\ve-\nu u_\ve=g_2,\ \text{on}\    \Gamma. \label{aa3}
\ee In order to describe precisely a  solution  of \eqref{aa1},
\eqref{aa3} we introduce the following spaces. Denote $\langle x_j \rangle=(1+x_j^2)^{1/2}$, $j=1,2$, and let, for real $\beta,\gamma$ and
$l=0,1,..$, for  relevant domain $\Theta$: \be
W^l_{\beta,\gamma}(\Theta)=\big\{ u: \sum_{|\delta|\leq
l}\int_\Theta e^{2\beta \langle x_1 \rangle} \langle
x_1\rangle^{2\gamma} \langle x_2\rangle^{2}|\nabla^\delta u|^2dx
<\infty \big\} \ , \ee \be \dot{H}^l(\Theta)=\big\{ u:
\sum_{1<|\delta|\leq l}\int_\Theta |\nabla^\delta u|^2dx
+\int_\Theta\langle x_2\rangle^{-2}|u|^2dx<\infty \big\} \ , \ l\geq 1\, ,\ee
 with the corresponding definitions of the norm and of the trace spaces. For the case $\Theta=\mathbb{R}_+^2$ we omit the dependence on the domain in the notations.

Application of the Fourier transform with respect to $x_1$ and shift
of the contour of integration (see e.g. \cite{KMR1}, \cite{NP} )
yields the following result:

\begin{lemma} \label{l77} \label{l55}Let $\beta> 0$ and  $\ve$ be
such that, $\beta> -\text{Im}(\nu^2-i\ve)^{1/2}$ and
$\frac{\ve^{1/2}}{\sqrt{2}}> -\text{Im}(\nu^2-i\ve)^{1/2}$. Suppose
further that $f\in W^0_{\beta,\gamma}$ and $g_2\in
W^{1/2}_{\beta,\gamma}(\Gamma)$,  and $\gamma \in \mathbb{R}$. Then
there exists a unique solution $u_\ve\in W^2_{0,\gamma}$ of
the problem \eqref{aa1}-\eqref{aa3}, and the following
representation holds
\be \label{rep000} u_\ve=b_1^\ve
U_1^\ve+b_2^\ve U_2^\ve + \tilde{u}_\ve, \ \text{where} \
\tilde{u}_\ve \in W^2_{\beta^*,\gamma}\, , \
\beta^*<\min\{\beta, \frac{\ve^{1/2}}{\sqrt{2}}\}.
\ee
Here
\be
\label{u1} U_1^\ve(x)= \chi(x_1)e^{-i(\nu^2-i\ve)^{1/2} x_1-\nu x_2},
 \ee
 \be \label{u2}
 U_2^\ve(x)= \chi(-x_1)e^{i(\nu^2-i\ve)^{1/2} x_1-\nu x_2},
 \ee
 $ b_1^\ve, b_2^\ve $, are constants,
and \be\label{ocep}
|b_1^\ve|+|b_2^\ve|+\|\tilde{u}_\ve\|_{W^2_{\beta^*,\gamma}}
\leq c
\big(\|f\|_{W^0_{\beta,\gamma}}+\|g_2\|_{W^{1/2}_{\overline{}\beta,\gamma}}\big).
\ee
\end{lemma}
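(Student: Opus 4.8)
The plan is to follow the route indicated just before the statement: apply the Fourier transform $\mathcal F$ in the $x_1$ variable, solve the resulting one–parameter family of ordinary differential boundary value problems in $x_2$, and recover $u_\ve$ by an inverse transform whose contour we then shift in order to read off the representation \eqref{rep000}. First I would set $\hat u(\xi,x_2)=\mathcal F u_\ve$ and transform \eqref{aa1}, \eqref{aa3}. Since the external normal on $\Gamma$ is $\partial_n=-\partial_{x_2}$, the problem becomes, for each fixed $\xi$, the half-line problem $\hat u''-\mu^2\hat u=\hat f$ on $x_2>0$ with $-\hat u'(0)-\nu\hat u(0)=\hat g_2$ and $\hat u\to 0$ as $x_2\to\infty$, where $\mu=\mu(\xi)=(\xi^2+i\ve)^{1/2}$ is the branch with $\re\,\mu>0$. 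The decaying homogeneous solution is $e^{-\mu x_2}$; solving by variation of parameters and matching the boundary condition gives an explicit solution operator in which the coefficient of $e^{-\mu x_2}$ carries the factor $(\mu-\nu)^{-1}$. Hence $\hat u(\xi,\cdot)$ is meromorphic in $\xi$, its only poles coming from $\mu-\nu=0$, i.e. from $\xi^2=\nu^2-i\ve$, so they sit exactly at $\xi=\pm(\nu^2-i\ve)^{1/2}$; at each pole $\mu=\nu$, so the residual profile in $x_2$ is precisely $e^{-\nu x_2}$. For $\ve>0$ these poles lie off the real axis, with $+(\nu^2-i\ve)^{1/2}$ in the fourth quadrant and $-(\nu^2-i\ve)^{1/2}$ in the second. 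The only other singularities of $\hat u$ are the branch points of $\mu$ at $\xi=\pm\ve^{1/2}e^{-i\pi/4}$, i.e. at height $\mp\ve^{1/2}/\sqrt2$.

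Existence and uniqueness in $W^2_{0,\gamma}$ I would obtain from Plancherel on the line $\im\,\xi=0$. The poles being off this line, the solution operator is bounded there; the two–derivative weighted bound comes from the uniform ellipticity of the $x_2$–problem for large $|\xi|$ (where $\mu\sim|\xi|$), the growing weight $\langle x_2\rangle^2$ is harmless against the uniform decay $e^{-\re\,\mu\,x_2}$, and the polynomial weight $\langle x_1\rangle^{2\gamma}$ is handled by commuting with $\partial_\xi$ as in the Kondratiev calculus of \cite{KMR1}, \cite{NP}. Uniqueness follows because, for $f=g_2=0$, $\hat u(\xi,\cdot)$ solves a homogeneous problem with no decaying solution for real $\xi$, hence vanishes a.e. and $u_\ve=0$.

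To obtain \eqref{rep000} I would invoke the Paley–Wiener correspondence: the weight $e^{2\beta\langle x_1\rangle}$ on the data means $\hat f,\hat g_2$ continue analytically, with values in the $x_2$–weighted spaces, to the strip $|\im\,\xi|<\beta$. Writing $u_\ve(x_1,x_2)=\tfrac1{2\pi}\int_{\mathbb R}e^{i\xi x_1}\hat u\,d\xi$, I push the contour to $\im\,\xi=\beta^*$ for $x_1>0$ and to $\im\,\xi=-\beta^*$ for $x_1<0$; on the shifted line $|e^{i\xi x_1}|=e^{-\beta^*|x_1|}$, which is what places the remainder in $W^2_{\beta^*,\gamma}$. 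Two constraints limit $\beta^*$: the contour must stay inside the analyticity strip, forcing $\beta^*<\beta$, and it must not reach the branch points of $\mu$, forcing $\beta^*<\ve^{1/2}/\sqrt2$. The hypotheses $\beta>-\im(\nu^2-i\ve)^{1/2}$ and $\ve^{1/2}/\sqrt2>-\im(\nu^2-i\ve)^{1/2}$ are exactly what guarantees that the poles, located at height $\pm(-\im(\nu^2-i\ve)^{1/2})$, lie strictly between the real axis and the admissible shifted contours; thus one may push past the poles yet stop short of the branch points, crossing precisely one pole in each direction. The residue at $\xi=-(\nu^2-i\ve)^{1/2}$ produces a term proportional to $e^{-i(\nu^2-i\ve)^{1/2}x_1-\nu x_2}$, i.e. $b_1^\ve U_1^\ve$ up to the cut-off \eqref{chi}, while the residue at $\xi=(\nu^2-i\ve)^{1/2}$ gives $b_2^\ve U_2^\ve$; the discrepancy between the pure exponentials and the cut-off waves in the transition region $|x_1|\lesssim N$ is smooth and compactly supported and is absorbed into $\tilde u_\ve$.

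Finally, \eqref{ocep} would follow by bounding the residues, hence $b_1^\ve,b_2^\ve$, and the shifted–contour integrals, hence $\|\tilde u_\ve\|_{W^2_{\beta^*,\gamma}}$, in terms of the analytic continuations of $\hat f,\hat g_2$, which are in turn controlled by $\|f\|_{W^0_{\beta,\gamma}}$ and $\|g_2\|_{W^{1/2}_{\beta,\gamma}(\Gamma)}$. I expect the main obstacle to be quantitative rather than structural: namely, establishing resolvent estimates for the $x_2$–problem uniformly along the whole shifted contour and simultaneously in all three weights — the exponential weight, the polynomial weight $\langle x_1\rangle^{2\gamma}$, and the $x_2$–weight — while controlling the behaviour near the branch points $\pm\ve^{1/2}e^{-i\pi/4}$, and then checking that the remainder lands exactly in $W^2_{\beta^*,\gamma}$ with the correct two–derivative control. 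The pole extraction itself is clean; the care is entirely in the weighted bookkeeping of the Paley–Wiener and Kondratiev machinery.
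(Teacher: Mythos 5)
Your proposal is correct and takes essentially the same approach as the paper: the paper justifies this lemma only by the remark that applying the Fourier transform in $x_1$ and shifting the contour of integration (as in \cite{KMR1}, \cite{NP}) yields the result, and your account — poles of the transformed solution at $\xi=\pm(\nu^2-i\varepsilon)^{1/2}$ from the factor $(\mu-\nu)^{-1}$, branch points at $\pm\varepsilon^{1/2}e^{-i\pi/4}$ limiting the shift to $\beta^*<\min\{\beta,\varepsilon^{1/2}/\sqrt{2}\}$, and residues producing the terms $b_j^\varepsilon U_j^\varepsilon$ — is exactly that outline carried out in detail.
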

\begin{rem} Clearly we have $U_j^0=u_j^-$, $j=1,2$.
\end{rem}
The constant $c$ appearing in \eqref{ocep} depends on  $\ve$.
The estimate which appears in the next lemma overcomes this
disadvantage.

\begin{theorem} \label{lsem2}Suppose that the conditions of  Lemma \ref{l55} hold, and additionally let us assume that  $\gamma=1$ and $g_2=0$. Then the following estimate holds
\be\label{ocep3} |b_1^\ve|+|b_2^\ve|+\|\tilde{u}_\ve\|_{\dot{H}^2}
\leq c \|f\|_{W^0_{\beta,1}}, \ee where $c$ does not depend
on $\ve$.
\end{theorem}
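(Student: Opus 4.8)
The plan is to refine the Fourier-analytic proof of Lemma \ref{l55}, keeping the same explicit constant-coefficient machinery but now tracking the dependence on $\ve$ and trading the weighted norm of the remainder for the unweighted $\dot{H}^2$ norm. First I would apply the partial Fourier transform in $x_1$, reducing \eqref{aa1}--\eqref{aa3} (with $g_2=0$) to the family of boundary value problems $\partial_{x_2}^2\hat{u}_\ve-(\xi^2+i\ve)\hat{u}_\ve=\hat{f}$ on $x_2>0$, with $\partial_{x_2}\hat{u}_\ve(\xi,0)+\nu\hat{u}_\ve(\xi,0)=0$ and decay as $x_2\to+\infty$. Solving by the Green's function gives $\hat{u}_\ve(\xi,x_2)=\int_0^\infty G_\ve(\xi;x_2,y)\hat{f}(\xi,y)\,dy$, whose only $\xi$-singularities are the simple zeros of the Wronskian $W(\xi)=-2\lambda(\lambda-\nu)$, $\lambda=(\xi^2+i\ve)^{1/2}$, located at $\xi_\pm=\pm(\nu^2-i\ve)^{1/2}$, i.e. exactly the wavenumbers of $U_1^\ve,U_2^\ve$ in \eqref{u1}--\eqref{u2}. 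The weight $e^{2\beta\langle x_1\rangle}$ makes $\hat{f}(\cdot,y)$ holomorphic in the strip $|\im\xi|<\beta$, and, crucially, the hypothesis $\gamma=1$ (the factor $\langle x_1\rangle^2$) yields $\partial_\xi\hat{f}\in L^2$ there, so that $\xi\mapsto\hat{f}(\xi,\cdot)$ is continuous and pointwise bounded by $c\|f\|_{W^0_{\beta,1}}$ uniformly up to the real axis.

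The second step recovers the wave amplitudes as residues. The two poles $\xi_\pm$ lie within distance $\beta^*$ of the real axis on opposite sides of it, and deforming the inversion contour past them in the appropriate direction produces precisely the terms $b_1^\ve U_1^\ve+b_2^\ve U_2^\ve$ of \eqref{rep000}, the amplitudes being the residues, $b_j^\ve$ proportional to $\hat{f}(\xi_\pm,\cdot)/W'(\xi_\pm)$ paired with the surface eigenfunction $e^{-\nu x_2}$. Since $W'(\xi_\pm)$ is a nonzero multiple of $\xi_\pm$, which stays bounded away from $0$ and $\infty$ as $\ve\to0$ (it tends to $\mp 2\nu$), and $\hat{f}(\xi_\pm,\cdot)$ is controlled by the pointwise bound above, one obtains $|b_1^\ve|+|b_2^\ve|\le c\|f\|_{W^0_{\beta,1}}$ with $c$ independent of $\ve$.

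For the remainder I would write $\widehat{\tilde{u}_\ve}=\hat{u}_\ve-\widehat{b_1^\ve U_1^\ve+b_2^\ve U_2^\ve}$ and estimate its unweighted norm by Plancherel in $x_1$, controlling $\int_{\mathbb R}\big(\|\partial_{x_2}^2\widehat{\tilde{u}_\ve}\|^2+\xi^4\|\widehat{\tilde{u}_\ve}\|^2+\|\langle x_2\rangle^{-1}\widehat{\tilde{u}_\ve}\|^2\big)\,d\xi$ over $x_2$. Away from $\xi=\pm\nu$ and for large $\xi$ the symbol $G_\ve$ and its $x_2$-derivatives are bounded uniformly in $\ve$, the $x_2$-integrals being controlled by the factor $e^{-\lambda x_2}$ (the slow decay for $\xi$ near $0$ being absorbed by the $\langle x_2\rangle^{-1}$ weight, which is exactly why this weaker weight appears in $\dot{H}^2$). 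In a fixed neighbourhood of $\xi=\pm\nu$ the $1/(\lambda-\nu)\sim 1/(\xi-\xi_\pm)$ singularity of $\hat{u}_\ve$ is cancelled by the matching singularity of the subtracted transforms $\widehat{U_j^\ve}$, and the residual difference is Lipschitz in $\xi$ with constant governed by $\partial_\xi\hat{f}$ — again the quantity made $L^2$ by $\gamma=1$. This produces a symbol bounded in $L^2(\xi)$ uniformly as the poles pinch the contour, and hence the claimed $\dot{H}^2$ bound with an $\ve$-independent constant.

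The hard part will be exactly this pole region: showing that, after subtracting the explicit outgoing-wave contributions, the remaining Fourier symbol stays in $L^2$ uniformly as $\ve\to0^{+}$, that is, when the two poles collide with the integration contour. This is the limiting-absorption mechanism, and it is where the single extra power of $\langle x_1\rangle$ (the assumption $\gamma=1$) is indispensable, both for the pointwise bound on $\hat{f}(\xi_\pm,\cdot)$ and for the Lipschitz cancellation of the principal singular part. A secondary technical nuisance is the cut-off $\chi$ in \eqref{u1}--\eqref{u2}: the subtraction is exact only modulo a compactly supported commutator term coming from $[\Delta,\chi]$, which one must check contributes a correction that is smooth in $\xi$ and bounded uniformly in $\ve$.
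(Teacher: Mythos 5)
Your route is genuinely different from the paper's, and in outline it is a viable one. For the coefficients the two arguments essentially coincide: the paper obtains $b_j^\ve$ by testing \eqref{aa1} against the homogeneous solutions, which yields exactly your residue formulas \eqref{bbb1}--\eqref{bbb2} ($\hat f$ evaluated at the complex wavenumber, paired with $e^{-\nu x_2}$), and the uniform bound is then a single Cauchy--Schwarz against the weight --- for this part the full strength $\partial_\xi\hat f\in L^2$ is not even needed. For the remainder, however, the paper does not return to the Fourier side at all: it uses the method of projections (decomposition \eqref{razl0} onto the surface mode $e^{-\nu x_2}$ and its orthogonal complement), treats the complement $w_2$ by the energy identity combined with the spectral--gap inequality \eqref{vps} and a Hardy inequality in $x_2$ (giving \eqref{S1} with no $\ve$-dependence and, notably, no decay assumption in $x_1$), and treats the one-dimensional mode $w_1$, which carries all the resonant behaviour, by a Morawetz-type multiplier $t e^{\alpha|t|}\overline{u_t}$ with $\alpha=|\ve|/\nu$ applied to the Schr\"odinger equation \eqref{hhh}. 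Your approach instead keeps everything in the symbol calculus and handles the resonance by subtracting the pole part and controlling the difference quotient. Each has its merits: the paper's argument is coordinate-free near the poles and gives clean, quantitative constants in $\nu$; yours makes the mechanism ``one extra power of $\langle x_1\rangle$ buys one $\xi$-derivative of $\hat f$'' completely explicit.

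One step of your sketch is wrong as literally stated and must be repaired. From $\gamma=1$ you get $\hat f(\cdot,y)\in H^1(\mathbb{R}_\xi)$, hence H\"older continuity of exponent $1/2$, \emph{not} Lipschitz continuity; the claim that the residual symbol $(h(\xi)-h(\xi_\pm))/(\xi-\xi_\pm)$ is ``Lipschitz with constant governed by $\partial_\xi\hat f$'' fails (it is not even pointwise bounded uniformly near the pole). The correct tool is the one-dimensional Hardy inequality $\int|\,(h(\xi)-h(a))/(\xi-a)|^2d\xi\lesssim\int|h'|^2d\xi$, which delivers exactly the $L^2(\xi)$ bound you need --- but only directly when $a$ is real. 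Since for $\ve>0$ the poles $\xi_\pm$ sit off the real axis, you must additionally use the analyticity of $\hat f$ in the strip $|\im\xi|<\beta$ (provided by the $e^{2\beta\langle x_1\rangle}$ weight) to shift the evaluation point to $\re\xi_\pm$, estimating $|h(\xi_\pm)-h(\re\xi_\pm)|\lesssim|\im\xi_\pm|^{1/2}$ so that the extra term $\int|\im\xi_\pm|\,|\xi-\xi_\pm|^{-2}d\xi$ stays bounded as the poles pinch the contour. With these two repairs (Hardy in place of Lipschitz, and the quantitative strip estimate for the complex pole), together with the check you already flag for the $[\Delta,\chi]$ commutator and for the degenerating $x_2$-decay near $\xi=0$, your argument closes and yields \eqref{ocep3}.
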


(Henceforth $c$ is a constant whose value may change from line to line.)

The Theorem \ref{lsem2} is proved in the Appendix.

The above statement allows us to pass to the limit in
\eqref{aa1},\eqref{aa3}, and we have $b_1^\ve\rightarrow b_1$,
$b_2^\ve\rightarrow b_2$, and $\tilde{u}_\ve$ converges to
$\tilde{u}$ weakly in the space $\dot{H}^2$,  as $\ve\rightarrow 0$.
As a result we obtain a solution $u$ to the problem
\eqref{aa10},\eqref{aa30}, which can be represented in the form
$u=b_1 u_1^-+b_2u^-_2+\tilde{u}$.

\section{Critically submerged body}

Consider now the original problem \eqref{w1}-\eqref{w3} with critically submerged body $\widetilde{\Omega}$.
Let us associate with this problem  an ``energy space'': $V=\{u: \int_\Omega |\nabla u|^2dx+\int_{\partial\Omega } |u|^2dS<\infty\}$.
Let us notice that
\be \label{f1}
\int_\Omega (x_2^2+1)^{-1}|v|^2\lesssim  \int_\Omega |\nabla v|^2+\int_{\partial \Omega}|v|^2dS.
\ee
(From now on, $\lesssim$\,  denotes  $\leq c$ with a constant $c$.)
This inequality follows from two obvious inequalities:
\be \int_{\Omega\cap\{|x_1|>N\}} \frac{|u|^2}{x_2^2+1}dx\lesssim  \int_{\Omega\cap\{|x_1|>N\}}
\left|\frac{\partial u}{\partial x_2}\right|^2dx+\int_{\{x_2=0,|x_1|>N\}} |u|^2dx_1,
\ee
\be \int_{\Omega\setminus B_N} \frac{|u|^2}{x_2^2+x_1^2}dx\lesssim  \int_{\Omega\setminus B_N}
\frac{1}{r}\left|\frac{\partial u}{\partial \theta}\right|^2d\theta dr+\int_{\{x_2=0,|x_1|>N\}}|u|^2dx_1,
\ee
and the Friedrichs  inequality
\be    \int_{\Omega\cap B_N} |u|^2dx\lesssim  \int_{\Omega\cap B_N}
|\nabla u|^2dx+\int_{\partial (\Omega\cap B_N)}|u^2|dS,
\ee
which is valid for {\it any} bounded domain, see \cite{M2} \S4.11.1. Here we assume that constant $N$ from the previous section, is such that $\widetilde{\Omega}\subset B_N:=\{x:\|x\|<N\}$.

We are planning to find a solution to the problem \eqref{w1}-\eqref{w3} by employing the principle of  limiting absorption.
In fact we need an absorption in the equation \eqref{w1} and in the boundary conditions \eqref{w2},\eqref{w3}, locally in the neighbourhood of the origin.

Let us fix a cut-off function $\mu\in C_0^\infty(\Gamma)$, such that $\mu(x_1)=1,\ |x_1|<N$ and $\mu(x_1)=0,\
|x_1|>2N$.
Consider now the following problem with a small absorption $\ve\geq 0$:
\be
    \Delta v_\ve-i\ve v_\ve=f ,\ \text{in}\  \Omega, \label{a1}
\ee
\be
    \partial_n v_\ve+i\ve v_\ve=g_1, \ \text{on}\   S, \label{a2}
\ee
    \be
    \partial_n v_\ve-\left(\nu-i\ve\mu \right) v_\ve=g_2,\ \text{on}\    \Gamma, \label{a3}
    \ee
and the corresponding energy space  : $$\mathbf{V}:=\{u: \int_\Omega |\nabla
u|^2dx+\int_{\partial\Omega } |u|^2dS+ \int_\Omega |
u|^2dx<\infty\}.$$

\begin{lemma} \label{exist}Let
$g_1\in L_2(S),\ g_2\in L_2(\Gamma)$ and $ f\in L_2(\Omega)$.
 Then for  any $\ve>0$ there exists a unique solution $v_\ve\in \mathbf{V}$
 of the problem \eqref{a1}-\eqref{a3}.
\end{lemma}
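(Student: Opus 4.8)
The plan is to set up the problem \eqref{a1}-\eqref{a3} as a coercive sesquilinear form on the Hilbert space $\mathbf{V}$ and apply the Lax--Milgram theorem. First I would write down the weak formulation: multiplying \eqref{a1} by a test function $\overline{w}$, integrating over $\Omega$, and using Green's identity together with the boundary conditions \eqref{a2}, \eqref{a3}, one arrives at the identity
\be
a_\ve(v,w):=\int_\Omega \nabla v\cdot\overline{\nabla w}\,dx
+i\ve\int_\Omega v\,\overline{w}\,dx
+i\ve\int_S v\,\overline{w}\,dS
-\int_\Gamma(\nu-i\ve\mu)v\,\overline{w}\,dS
=F(w),
\ee
where $F(w)=-\int_\Omega f\overline{w}\,dx+\int_S g_1\overline{w}\,dS+\int_\Gamma g_2\overline{w}\,dS$ is the right-hand-side functional. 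The boundary integrals are well defined because elements of $\mathbf{V}$ have traces in $L_2(\partial\Omega)$ by the very definition of the space, and $\mu$ is a bounded compactly supported multiplier on $\Gamma$.

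The heart of the argument is coercivity. The form $a_\ve$ is not coercive on $\mathbf{V}$ directly, because of the negative term $-\nu\int_\Gamma|v|^2\,dS$ coming from \eqref{a3}; the standard device is to test with a suitable complex combination. I would consider the combination $\re\big(e^{i\theta}a_\ve(v,v)\big)$ for an appropriately chosen phase, or equivalently split into real and imaginary parts. The imaginary part gives
\be
\im a_\ve(v,v)=\ve\Big(\int_\Omega|v|^2\,dx+\int_S|v|^2\,dS+\int_\Gamma\mu|v|^2\,dS\Big),
\ee
which, since $\ve>0$, controls the full $L_2(\Omega)$ norm of $v$ and the boundary $L_2$ norms over the compactly supported regions. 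The real part is
\be
\re a_\ve(v,v)=\int_\Omega|\nabla v|^2\,dx-\nu\int_\Gamma|v|^2\,dS.
\ee
Combining the two: the negative boundary term is supported partly inside $\{|x_1|<2N\}$, where it is dominated by the $\int_\Gamma\mu|v|^2\,dS$ term available from the imaginary part (at the cost of an $\ve$-dependent constant, which is permitted since $\ve>0$ is fixed), and partly outside, where I would absorb it using inequality \eqref{f1} together with a one-dimensional trace estimate bounding $\int_{\Gamma\cap\{|x_1|>2N\}}|v|^2\,dx_1$ by the Dirichlet energy plus lower-order terms. The upshot is an estimate of the form $|a_\ve(v,v)|\geq c_\ve\|v\|_{\mathbf{V}}^2$, with $c_\ve>0$ depending on $\ve$.

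The main obstacle will be this coercivity step: making the absorption of the negative surface term $-\nu\int_\Gamma|v|^2$ rigorous, since $\Omega$ is unbounded and contains the two cusps, so one cannot simply invoke compactness of a trace embedding. I would handle the far field ($|x_1|>2N$) via the elementary trace inequalities already displayed before \eqref{f1}, which bound the surface integral by the gradient energy, and the near field by the $\ve$-weighted volume term; the cusp geometry does not actually obstruct this because $\mathbf{V}$ builds in both the gradient and full $L_2$ control. Once coercivity and the obvious boundedness $|a_\ve(v,w)|\lesssim_\ve\|v\|_{\mathbf{V}}\|w\|_{\mathbf{V}}$ and $|F(w)|\lesssim\|w\|_{\mathbf{V}}$ are in hand, Lax--Milgram yields a unique $v_\ve\in\mathbf{V}$ solving the weak problem, and elliptic regularity identifies it as a solution of \eqref{a1}-\eqref{a3} in the stated sense.
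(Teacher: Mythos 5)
Your proposal follows essentially the same route as the paper: the identical weak formulation with the sesquilinear form $a_\ve$ on $\mathbf{V}$ and an application of the Lax--Milgram lemma. The only difference is that the paper simply asserts continuity and coercivity of $a_\ve$, whereas you spell out the coercivity argument (imaginary part for the $L_2$ terms, trace inequality plus the $\ve\mu$ term to absorb $-\nu\int_\Gamma|v|^2$), which is correct and fills in what the paper leaves implicit.
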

\begin{proof}
 Let us associate with \eqref{a1}-\eqref{a3} a variational
problem: Find $v_\ve$ such that \be  \label{varr}
a_\ve(v_\ve,\varphi):=F(\varphi), \  \forall \varphi\in \mathbf{V}. \ee
Here \be a_\ve(v,\varphi):=\int_\Omega \nabla v\cdot
\overline{\nabla\varphi}\, dx - \int_\Gamma \left(\nu-i\ve\mu(x_1)
\right)v\overline{\varphi}\,dx_1 \ee
$$+i\ve\int_S v\overline{\varphi}\,dS+i\ve \int_\Omega v_\ve\overline{\varphi}\,dx,$$

and \be F(\varphi):=-\int_\Omega  f\overline{\varphi}\, dx
+\int_\Gamma g_2\overline{\varphi}\,dx_1+\int_S
g_1\overline{\varphi}\,dS. \ee
Sesquilinear form
$a_\ve(\cdot,\cdot)$ is clearly continuous and coercive on $\mathbf{V}$,
and
 $F$ is an anti-linear continuous functional on $\mathbf{V}$,
 and the application of  Lax-Milgram lemma, see e.g. \cite{SP},
 gives us a unique solution $v_\ve$ from energy space $\mathbf{V}$.
 Due to ellipticity, the local estimates give us
 $v_\ve \in H^2(\Omega\setminus B_\sigma)$ for any positive $\sigma$.
\end{proof}

 We aim to pass to the limit in \eqref{varr} as $\ve\rightarrow 0$. The main difficulty is
the absence of compactness of embedding of $H^1(\Omega)$ into
$L_2(\partial\Omega)$ and $L_2(\Omega)$ at  infinity, and lack of
compactness of embedding of $H^1(\Omega)$ into $L_2(\partial\Omega)$
in the neighbourhood of the origin due to the presence of the external
quadratic cusps.

 To overcome this problem, we need to employ more
detailed information about properties of the solutions.

We start form the description of $v_\ve$ in right and left
neighbourhoods of the origin:
    \be \Omega_\tau^+:=\Omega_\tau\cap
\{0<x_1\}, \ \ \Omega_\tau^-:=\Omega_\tau \cap \{x_1<0\}  ,
\Upsilon_\tau^\pm:=\partial \Omega_\tau^\pm\cap \partial\Omega \,.
    \ee
In order to describe precisely a  solution  of \eqref{varr},
 we need to introduce the following weighted Sobolev spaces:
 let $\Xi$ be a domain and let $\gamma$ be
real,  $l=0,1,...$, then we define $\mathcal{W}_{\gamma}^l(\Xi)$ and
$V_{\gamma}^l(\Xi)$ as the closures of the set
$C_0^\infty(\overline{\Xi}\setminus O)$ with respect to the norms
    \be
        \|u\|^2_{\mathcal{W}_{\gamma}^l
        (\Xi)}:=
    \label{www}
    \sum_{|\delta|\leq l}\int_{\Xi}
        |x_1|^{4(\gamma-l+|\delta|)}
        |\partial_x^\delta u|^2dx,
    \ee
     \be
        \|u\|^2_{V_{\gamma}^l
        (\Xi)}:=
\label{www66}
    \sum_{|\delta|\leq l}\int_{\Xi}
        |x_1|^{2(\gamma-l+|\delta|)}
        |\partial_x^\delta u|^2dx,
    \ee
respectively, where $\delta\in\mathbb{Z}_+^2$ is the usual multi-index.
Furthermore, for $ l\geq1$ we define
$\mathcal{W}_{\gamma}^{l-1/2}(\partial\Xi)$ and
$V_{\gamma}^{l-1/2}(\partial\Xi)$ as the trace space for
$\mathcal{W}_{\gamma}^l(\Xi)$ and $V_{\gamma}^l(\Xi)$ on the
boundary $\partial\Xi$.

Finally we define the space
$$ \mathcal{V}^2_\gamma(\Omega_\tau^+)=\{u\in V^2_{2\gamma} (\Omega_\tau^+):
P_2u \in \mathcal{W}^2_{\gamma}(\Omega_\tau^+)\},$$ with the norm
\be \|u\|_{\mathcal{V}^2_\gamma(\Omega_\tau^+)}=
\|u\|_{V^2_{2\gamma}(\Omega_\tau^+)}+\|P_2u\|_{\mathcal{W}^2_{\gamma}(\Omega_\tau^+
)}. \ee
Here the projection operator $P_2$ is defined as follows. We represent
$u\in V^2_{2\gamma} (\Omega_\tau^+)$ as
 \be u(x_1,x_2)=u_1(x_1)+u_2(x_1,x_2),\ \ 0<x_1<\tau, \
\ 0<x_2<\phi(x_1), \ee where
$$u_1(x_1)=\phi(x_1)^{-1}\int_0^{\phi(x_1)}u(x_1,x_2)dx_2,$$
and define $$P_1u:=u_1, \,\,\,\  P_2u:=u-u_1:=u_2.$$
We also define fully analogous space $\mathcal{V}^2_\gamma(\Omega_\tau^-)$.

One of  characteristic properties of the above scale of spaces is
the following. If $u\in\mathcal{V}^2_\gamma(\Omega_\tau^+)$ then for
    \be
        \mathcal{L}u:=(\Delta -i\ve) u,
            \ee
and
    \be
        \mathcal{B}u:=\left\{(\partial_n +i\ve )u|_{S\cap \Upsilon_\tau^+},
         (\partial_n -\nu+i\ve)u|_{\Gamma\cap\Upsilon_\tau^+} \right\}
    \ee
we have  $\left\{\mathcal{L}u,\mathcal{B}u
\right\}\in\mathcal{W}_{\gamma}^{0}(\Omega_\tau^+)\times
\mathcal{W}_{\gamma}^{1/2}(\Upsilon_\tau^+)$.
Denote
\be
\label{lambda66}
\lambda_\ve:=2(\nu-2i\ve)/\kappa-1/4.
\ee
The following theorem was proved in \cite{KaM}.
\begin{theorem} (see \cite{KaM}, Theorem 4.3)\label{nfred}
Let  $\gamma\neq
1/2\pm1/2\im \sqrt{\lambda_\ve }$. Suppose that
$u_\ve\in\mathcal{V}^2_\gamma(\Omega_\tau^+)$ is a solution of the
problem
    \be
    \Delta u_\ve-i\ve u_\ve=f ,\ \text{in}\  \Omega_\tau^+, \label{b1}
\ee \be
    \partial_n u_\ve+i\ve u_\ve=g_1, \ \text{on}\   S\cap \Upsilon_\tau^+,
    \label{b2}
\ee
    \be
    \partial_n u_\ve-\left(\nu-i\ve\right) u_\ve=g_2,\ \text{on}\
     \Gamma\cap\Upsilon_\tau^+, \label{b3}
    \ee
where $g:=(g_1,g_2)$ and $(f,g)\in
\mathcal{W}_{\gamma}^{0}(\Omega_\tau^+)\times
\mathcal{W}_{\gamma}^{1/2}(\Upsilon_\tau^+)$. Then  for any $\ve $,
there exists $\delta_0>0$ such that, for any $0<\delta<\delta_0$,
solution $u_\ve$ satisfies the estimate \be \label{llll}
\|u\|_{\mathcal{V}^2_\gamma(\Omega_{\delta/2}^+)}\leq c\left(
\|f\|_{\mathcal{W}^{0}_{\gamma}(\Omega_{\delta}^+)}+
\|g\|_{\mathcal{W}^{1/2}_{\gamma}(\Upsilon_\delta^+)}+
\|u\|_{L_2(\Omega_{\delta}^+\setminus \Omega_{\delta/2}^+)}\right).
 \ee
Here constant $c$ is independent of $f,g$ and $u_\ve$.
 \end{theorem}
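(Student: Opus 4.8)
The plan is to recognise Theorem \ref{nfred} as an a priori (coercive) estimate with remainder for an elliptic boundary value problem in a quadratic cusp, and to prove it by the Kondrat'ev--Maz'ya--Plamenevskii technique: reduce to a model problem on a half-strip, analyse the associated operator pencil, perturb, and localise. First I would straighten the cusp $\Omega_\tau^+=\{0<x_1<\tau,\ 0<x_2<\phi(x_1)\}$ by the change of variables $\theta=x_2/\phi(x_1)\in(0,1)$, $t=-\log x_1\to+\infty$, mapping $\Omega_\tau^+$ onto a half-strip $\Pi=\{t>T_0,\ 0<\theta<1\}$, where $T_0=-\log\delta_0$. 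Since $\phi(x_1)=\tfrac{\kappa}{2}x_1^2(1+o(1))$ with $\phi\in C^2$ and $\phi(0)=\phi'(0)=0$, the Laplacian becomes a second order operator whose frozen-coefficient principal part $\mathcal{L}_0$ is independent of $t$, the $t$-dependent remainder being $o(1)$ as $t\to\infty$; the boundary operators in \eqref{b2}, \eqref{b3} (at $\theta=1$ on $S$ and at $\theta=0$ on $\Gamma$) transform likewise into $t$-independent model operators $\mathcal{B}_0$ plus decaying remainders. In these coordinates the weighted norms \eqref{www}, \eqref{www66} become exponentially weighted Sobolev norms $\int_\Pi e^{2\beta t}|\cdots|^2$ with $\beta$ an affine function of $\gamma$; the reason $\mathcal{W}_\gamma$ carries the exponent $|x_1|^{4(\cdots)}$ while $V_\gamma$ carries $|x_1|^{2(\cdots)}$ is exactly the quadratic degeneration $\phi\sim x_1^2$ of the transverse scale, and the splitting $u=P_1u+P_2u$ isolates the zeroth transverse mode, which degenerates at a different rate from the higher modes.

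Next I would apply the Mellin transform in $x_1$ (equivalently the Fourier transform in $t$) to the model operators, producing a holomorphic family $\mathcal{A}(\lambda)$ of two-point ODE boundary value problems on $(0,1)$ with spectral parameter $\lambda$. A direct computation gives its eigenvalues explicitly; their real parts are the two exponents $\tfrac12\pm\tfrac12\im\sqrt{\lambda_\ve}$, with $\lambda_\ve$ as in \eqref{lambda66}, so that $\mathcal{A}(\lambda)$ is invertible on every vertical line $\{\re\lambda=\gamma\}$ that avoids them --- exactly the lines admitted by the hypothesis $\gamma\neq\tfrac12\pm\tfrac12\im\sqrt{\lambda_\ve}$. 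On such a line the symbol is elliptic with parameter $\im\lambda$, whence $\|\mathcal{A}(\lambda)^{-1}\|$ enjoys the polynomial bounds needed to control every derivative counted in the $\mathcal{V}^2_\gamma$-norm. Parseval's identity in the weighted $L_2(\Pi)$ setting then yields the coercive model estimate on the whole strip, namely $\|u\|_{\mathcal{V}^2_\gamma}\lesssim\|\mathcal{L}_0u\|_{\mathcal{W}^0_\gamma}+\|\mathcal{B}_0u\|_{\mathcal{W}^{1/2}_\gamma}$.

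Then I would restore the true problem by perturbation and localise. The difference $\mathcal{L}-\mathcal{L}_0$, the analogous boundary differences, and the fixed lower order absorption terms $-i\ve$, $+i\ve$ all have weighted operator norm tending to zero on the shrinking cusp $\{t>T_0\}=\{x_1<\delta_0\}$; hence for $\delta_0=\delta_0(\ve)$ small enough this difference is absorbed into the left-hand side and the model estimate survives for the genuine operators on $\Omega_{\delta_0}^+$. To convert the global-on-$\Pi$ estimate into the local estimate \eqref{llll} with a remainder, I would multiply $u$ by a cut-off $\zeta$ equal to $1$ on $\Omega_{\delta/2}^+$ and supported in $\Omega_\delta^+$, apply the preceding estimate to $\zeta u$, and observe that the commutators $[\mathcal{L},\zeta]u$ and $[\mathcal{B},\zeta]u$ are supported in the fixed annular region $\Omega_\delta^+\setminus\Omega_{\delta/2}^+$, where the weights are bounded above and below and only first order derivatives of $u$ enter. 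Standard interior and boundary elliptic regularity on this annulus (which is bounded away from the cusp tip), together with the data norms already present on the right of \eqref{llll}, bound these commutator terms by $\|u\|_{L_2(\Omega_\delta^+\setminus\Omega_{\delta/2}^+)}$, giving precisely \eqref{llll}.

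The main obstacle I anticipate is the correct treatment of the two transverse scales encoded by the splitting $u=P_1u+P_2u$ and the mismatched weight exponents of $\mathcal{W}_\gamma$ versus $V_\gamma$: the averaged component $P_1u$ and its complement degenerate at different rates at the cusp, so the model pencil $\mathcal{A}(\lambda)$ must be shown invertible in the composite space $\mathcal{V}^2_\gamma$ for both components simultaneously, with the line $\{\re\lambda=\gamma\}$ threading between its eigenvalues. Establishing this invertibility with the parameter-dependent bounds required for the $\mathcal{V}^2_\gamma$-estimate --- rather than merely a Fredholm property --- is the delicate point on which the whole a priori estimate rests.
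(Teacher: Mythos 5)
First, note that the paper does not prove this statement at all: it is quoted verbatim from \cite{KaM} (Theorem 4.3 there), so there is no in-paper argument to compare against. Judged on its own merits, your proposal has the right general philosophy (model problem, operator pencil, perturbation, cut-off/commutator localisation, and you correctly identify the $P_1u/P_2u$ splitting and the mismatch between the $\mathcal{W}_\gamma$ and $V_\gamma$ weights as the crux), but the concrete reduction you build the proof on does not exist for a cusp. Under $\theta=x_2/\phi(x_1)$, $t=-\log x_1$, the domain does become a half-strip, but the Laplacian becomes (up to lower-order terms) $e^{2t}\partial_t^2+(4/\kappa^2)e^{4t}\partial_\theta^2+\dots$: the two principal terms carry \emph{different} exponential factors because $\phi\sim\kappa x_1^2/2$ degenerates quadratically rather than linearly. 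No uniform rescaling makes the frozen-coefficient principal part $t$-independent with an $o(1)$ remainder; that reduction is available for conical points, not for cusps. Consequently the central step of your argument --- a two-dimensional, $t$-independent Mellin pencil $\mathcal{A}(\lambda)$ on $(0,1)$ whose eigenvalues sit at $\re\lambda=\tfrac12\pm\tfrac12\im\sqrt{\lambda_\ve}$, invertible on vertical lines with parameter-elliptic bounds, followed by Parseval --- is not available as stated. You flag exactly this two-scale difficulty as ``the delicate point on which the whole a priori estimate rests,'' but you do not resolve it, and it is not a technicality: it is the reason the problem falls outside standard Kondrat'ev theory.

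The way this is actually handled in \cite{KaM} is to make the decomposition $u=P_1u+P_2u$ the engine of the proof rather than a complication of it. For the zero-mean component $P_2u$ one uses the Poincar\'e inequality on the cross-section, $\|P_2u\|_{L_2(0,\phi(x_1))}\lesssim\phi(x_1)\|\partial_{x_2}P_2u\|_{L_2(0,\phi(x_1))}$, which together with ellipticity yields a coercive estimate in the $\mathcal{W}^2_\gamma$-scale (whence the exponent $|x_1|^{4(\cdot)}$) with \emph{no} spectral restriction on $\gamma$. The averaged component $u_1=P_1u$ satisfies, after integrating the equation and the Robin condition over the cross-section, a degenerate one-dimensional Euler-type equation $(\phi u_1')'+\nu u_1=\dots$, i.e.\ essentially $x_1^2u_1''+2x_1u_1'+(2\nu/\kappa)u_1=\dots$, whose indicial roots are $-\Lambda^\pm_\ve=-\tfrac12\mp i\sqrt{\lambda_\ve}$; one-dimensional Mellin analysis of \emph{this} equation is what produces the condition $\gamma\neq\tfrac12\pm\tfrac12\im\sqrt{\lambda_\ve}$ and the $V^2_{2\gamma}$-estimate. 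The coupling between the two components and the $\ve$-terms are lower order on $\Omega^+_{\delta}$ for small $\delta$ and are absorbed, and your final localisation step (cut-off, commutators supported in $\Omega^+_\delta\setminus\Omega^+_{\delta/2}$, standard elliptic estimates there) is fine. So the gap is concrete: you need to replace the half-strip pencil argument by the cross-sectional averaging plus the one-dimensional Euler/Mellin analysis; without that, the coercive model estimate you invoke has no proof.
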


Next Theorem from \cite{KaM} describes the structure of the solution.

 \begin{theorem} \label{pred2}
Let  $\gamma$,$\gamma_1$ be real numbers, and $\gamma,\gamma_1\neq 1/2\pm
1/2 \im \sqrt{\lambda_\ve}$,  and $(f,g)\in
\mathcal{W}_{\gamma_1}^{0}(\Omega_\tau^+)\times
\mathcal{W}_{\gamma_1}^{1/2}(\Upsilon_\tau^+)$. Suppose that
$v_\ve\in\mathcal{W}_{\gamma}^{2}(\Omega)$ is a solution of the
 problem \eqref{b1}-\eqref{b3}. Then the solution $v_\ve$ admits
representation
    \be\label{exp}
    v_\ve=c^+Y_\ve^++c^-Y_\ve^-+\widetilde{Y_\ve},\
     \text{in} \ \Omega_{\delta}^+,
    \ee
for sufficiently small positive $\delta$. Here $\widetilde{Y}_\ve\in
\mathcal{V}_{\gamma_1}^{2}(\Omega_{\delta}^+)$,
 $Y_\ve^\pm$ are solutions of homogeneous problem \eqref{b1}-\eqref{b3}, and  $ c^\pm$ are constants.
\end{theorem}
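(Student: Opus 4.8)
The plan is to regard \eqref{b1}--\eqref{b3} as an elliptic boundary value problem on a domain with a quadratic (cuspidal, non-conical) outer point and to obtain \eqref{exp} from the Kondratiev--Maz'ya--Plamenevskii theory of point asymptotics, in the cusp-adapted form of \cite{KaM}. The geometry behind the two weighted scales is the fibrewise averaging $u=P_1u+P_2u$ over the thin sections $\{0<x_2<\phi(x_1)\}$, $\phi(x_1)\sim\tfrac{\kappa}{2}x_1^2$: by the fibrewise Poincar\'e/Friedrichs inequality the transverse part $P_2u$ is naturally measured in the stronger $\mathcal{W}$-weights $|x_1|^{4(\cdots)}$, while the average $u_1=P_1u$ is a function of $x_1$ alone. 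First I would integrate \eqref{b1} across a fibre, substitute the boundary data from \eqref{b2}--\eqref{b3}, and check that, modulo terms controlled by $(f,g)$ and by $P_2u$, the average obeys the effective one-dimensional equation $(\phi\,u_1')'+(\nu-2i\ve)u_1=\mathrm{RHS}$. Because $\phi\sim\tfrac{\kappa}{2}x_1^2$ this is an Euler equation $x_1^2u_1''+2x_1u_1'+\tfrac{2(\nu-2i\ve)}{\kappa}u_1=0$ whose indicial roots are $-\tfrac12\pm i\sqrt{\lambda_\ve}$, with $\lambda_\ve$ as in \eqref{lambda66}; this is the algebraic source of the two singular solutions $Y_\ve^\pm$.

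Second, I would pass to a model operator by the anisotropic change of variables $t=-\ln x_1$, $\xi=x_2/\phi(x_1)$, which straightens $\Omega_\delta^+$ into a semi-infinite strip and makes the problem amenable to the Mellin transform in $x_1$ (equivalently the Fourier--Laplace transform in $t$). This yields a holomorphic operator pencil $\mathfrak{A}_\ve(\zeta)$ in the transverse variable whose non-invertibility points are discrete and, in the strip between the two weight lines, reduce to the pair attached to the indicial roots above; in the normalization of the scales $\mathcal{W}^l_\gamma,V^l_\gamma$ these are exactly the excluded weights $\gamma=1/2\pm\tfrac12\im\sqrt{\lambda_\ve}$. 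The a priori estimate of Theorem~\ref{nfred} encodes precisely the invertibility of $\mathfrak{A}_\ve(\zeta)$ off its spectrum, so it guarantees that $\mathfrak{A}_\ve(\zeta)^{-1}$ exists and is bounded on each of the lines $\re\zeta=\gamma$ and $\re\zeta=\gamma_1$, which is legitimate exactly because the hypotheses keep $\gamma,\gamma_1$ off the spectrum.

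Third is the residue (contour-shift) step that produces \eqref{exp}. Writing $v_\ve$ near the cusp as the inverse Mellin transform along $\re\zeta=\gamma$, and using that the transformed right-hand side stays holomorphic up to the line $\re\zeta=\gamma_1$ --- this is where the hypothesis $(f,g)\in\mathcal{W}^0_{\gamma_1}\times\mathcal{W}^{1/2}_{\gamma_1}$ enters --- I would shift the contour from $\gamma$ to $\gamma_1$. The only singularities crossed are the two poles of $\mathfrak{A}_\ve(\zeta)^{-1}$, and the sum of their residues is a finite combination $c^+Y_\ve^++c^-Y_\ve^-$ of homogeneous solutions, where $Y_\ve^\pm$ are the exact homogeneous solutions obtained by perturbing the leading modes $x_1^{-1/2\pm i\sqrt{\lambda_\ve}}$ (times the transverse profile) via Theorem~\ref{nfred}; the remaining integral along $\re\zeta=\gamma_1$, pulled back to the original variables, is the remainder $\widetilde{Y}_\ve$, which by construction lands in the finer space $\mathcal{V}^2_{\gamma_1}(\Omega_\delta^+)$.

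The main obstacle is that the point is a genuine quadratic cusp rather than a cone, so the homogeneous dilations underpinning classical Kondratiev theory are unavailable and have to be replaced by the anisotropic scaling $x_1\mapsto sx_1$, $x_2\mapsto s^2x_2$ --- the reason the two weight families carry the exponents $4(\cdots)$ and $2(\cdots)$. The delicate point is to show that $\widetilde{Y}_\ve$ belongs to the hybrid space $\mathcal{V}^2_{\gamma_1}$ and not merely to $V^2_{2\gamma_1}$, i.e.\ that its transverse component $P_2\widetilde{Y}_\ve$ still sits in the stronger $\mathcal{W}$-norm; establishing this requires combining the fibrewise Poincar\'e inequality with a careful analysis of $\mathfrak{A}_\ve(\zeta)$ near its two eigenvalues. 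As all of this is carried out in \cite{KaM}, the proof reduces to invoking Theorem~\ref{nfred} together with the asymptotic expansion established there.
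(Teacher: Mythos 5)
The paper offers no proof of this statement at all: it is imported verbatim from the companion paper \cite{KaM} (``Next Theorem from \cite{KaM} describes the structure of the solution''), so there is nothing internal to compare against. Your sketch --- fibrewise averaging $u=P_1u+P_2u$, the effective Euler equation with indicial roots $-1/2\pm i\sqrt{\lambda_\ve}$ consistent with \eqref{lambda66} and $\Lambda^\pm_\ve$, the anisotropic straightening $z=x_2/\phi(x_1)$, and the Mellin contour shift producing the two residue terms plus a remainder in $\mathcal{V}^2_{\gamma_1}$ --- is a faithful reconstruction of the cusp-adapted Kondratiev--Maz'ya--Plamenevskii argument carried out in \cite{KaM}, and, like the paper, it ultimately defers the technical details (in particular the membership of $P_2\widetilde{Y}_\ve$ in the $\mathcal{W}$-scale) to that reference.
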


Properties of the special solutions $Y_\ve^\pm$ had been described in \cite{KaM},
see Theorem 4.4,  which can be reformulated in our
context as follows.

\begin{theorem} \label{vv} Suppose that in \eqref{lambda66} $\lambda_\ve\neq 0$. Then there exist  solutions $Y_\ve^+$
and $Y_\ve^-$ of the homogeneous problem
\eqref{b1}-\eqref{b3} in $\Omega_\delta^+$ for small enough  positive $\delta$, such that
\be
\label{igriki}
Y_\ve^\pm(x)=y_1^\pm(x_1,\ve)+y_2^\pm(x,\ve),\
\int_0^{\phi(x_1)}y_2^\pm(x)dx_2=0,\ x_1<\delta,
\ee
where \be \label{yy}
y_1^\pm(x_1,\ve)=x_1^{-\Lambda^\pm_\ve}+\widetilde{y}_1^\pm(x_1,\ve),\
y_1^\pm\in V^2_{2\gamma_\pm}(\Omega_\delta), \
\forall \gamma_\pm>\mp \im \sqrt{\lambda_\ve}/2, \ee
$$
y_2^\pm(x_1,x_2,\ve)=x_1^{2-\Lambda^\pm_\ve}Q_\ve^\pm(z)+\widetilde{y}_2^\pm(x,\ve),\
\widetilde{y}_2^\pm \in
\mathcal{W}^2_{\gamma_\pm}(\Omega_\delta), \ \forall \gamma_\pm>\mp \im \sqrt{\lambda_\ve}/2.
$$
Here
$$z=\frac{x_2}{\phi(x_1)},
$$

\be Q_\ve^\pm(z)=\frac{\kappa}{2}\left(\nu-i\ve\right)\left(\frac{(z-1)^2}{2}-\frac{1}{6}\right)
-\frac{\kappa}{2}\left(\kappa\Lambda^\pm_\ve+i\ve\right)\left(\frac{z^2}{2}-\frac{1}{6}\right),
\label{QQ}\ee
and
$$\Lambda^\pm_\ve = 1/2\pm i \sqrt{\lambda_\ve}.
$$
\end{theorem}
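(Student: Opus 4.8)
The plan is to read Theorem \ref{vv} as the construction of the two independent solutions of the model (limit) problem attached to the quadratic cusp, via the operator–pencil machinery for elliptic problems in cuspidal domains (the approach of \cite{KaM}). First I would exploit the decomposition $Y=P_1Y+P_2Y=y_1(x_1)+y_2(x_1,x_2)$, with $\int_0^{\phi(x_1)}y_2\,dx_2=0$, which underlies the very definition of the spaces $\mathcal V^2_\gamma$, and pass to the stretched cross-sectional variable $z=x_2/\phi(x_1)$, mapping the thin region $\Omega_\delta^+$ onto the half-strip $\{0<x_1<\delta,\ 0<z<1\}$. Near the origin $\phi(x_1)=\tfrac{\kappa}{2}x_1^2(1+o(1))$, so to leading order the problem \eqref{b1}--\eqref{b3} is invariant under the scaling $x_1\mapsto tx_1$, and one looks for self-similar solutions $Y\sim x_1^{-\Lambda}w(z)$. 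This is the standard reduction to a one-dimensional spectral pencil in the variable $z$.

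The indicial equation is obtained by averaging the interior equation across the section: testing $\Delta Y-i\ve Y=0$ against functions of $x_1$ alone and integrating by parts converts \eqref{b1}--\eqref{b3}, after dropping the subordinate volume term $i\ve\phi u_1$, into the divergence-form ODE
\be
(\phi\,u_1')'+(\nu-2i\ve)\,u_1=0,
\ee
where the surface condition \eqref{b3} contributes $(\nu-i\ve)u_1$ and the body condition \eqref{b2} contributes $-i\ve u_1$, combining to the coefficient $\nu-2i\ve$. Substituting $u_1=x_1^{-\Lambda}$ and $\phi=\tfrac{\kappa}{2}x_1^2$ yields
\be
\frac{\kappa}{2}\Lambda(\Lambda-1)+(\nu-2i\ve)=0,
\ee
whose roots are exactly $\Lambda^\pm_\ve=1/2\pm i\sqrt{\lambda_\ve}$ with $\lambda_\ve$ as in \eqref{lambda66}; the hypothesis $\lambda_\ve\neq0$ guarantees that they are simple and distinct, so no logarithmic terms arise and the branches $Y_\ve^\pm$ are independent. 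I would then build the first cross-sectional corrector: writing $y_2=x_1^{2-\Lambda}Q(z)$, the dominant balance of the interior equation forces $\tfrac{4}{\kappa^2}Q''=-\Lambda(\Lambda+1)$, while the leading orders of \eqref{b3} and \eqref{b2} (using $\partial_n\approx\partial_{x_2}-\phi'\partial_{x_1}$ on $S$) supply the Neumann data $Q'(0)=-\tfrac{\kappa}{2}(\nu-i\ve)$ and $Q'(1)=-\tfrac{\kappa}{2}(\kappa\Lambda+i\ve)$. The compatibility relation $Q'(1)-Q'(0)=\int_0^1 Q''\,dz$ for this one-dimensional Neumann problem reproduces precisely the indicial equation, and the unique mean-zero solution is the explicit polynomial $Q_\ve^\pm$ in \eqref{QQ}; one checks directly the values of $Q'(0)$, $Q'(1)$, $Q''$ and $\int_0^1 Q_\ve^\pm\,dz=0$.

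It remains to upgrade this formal expansion to an exact solution with a controlled remainder. Setting $Y_\ve^\pm=x_1^{-\Lambda^\pm}+x_1^{2-\Lambda^\pm}Q_\ve^\pm(z)+R^\pm$ and inserting into \eqref{b1}--\eqref{b3}, the leading and corrector terms were chosen so that the residual data $(\Delta-i\ve)(\cdots)$ and the boundary residuals gain the factor $x_1^2$ relative to the input; hence they lie in $\mathcal W^0_{\gamma_1}(\Omega_\delta^+)\times\mathcal W^{1/2}_{\gamma_1}(\Upsilon_\delta^+)$ for a weight index $\gamma_1$ strictly better than the one attached to $\Lambda^\pm$. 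The solvability and regularity of $R^\pm$ then follow from the Fredholm/isomorphism theory of the pencil in the weighted spaces, i.e. from the a priori estimate \eqref{llll} of Theorem \ref{nfred} together with the structural Theorem \ref{pred2}, which place the remainders $\tilde y_1^\pm,\tilde y_2^\pm$ in $V^2_{2\gamma_\pm}$ and $\mathcal W^2_{\gamma_\pm}$ for every $\gamma_\pm>\mp\,\im\sqrt{\lambda_\ve}/2$. The restriction on $\gamma_\pm$ is exactly what separates the admissible exponents $\Lambda^+$ and $\Lambda^-$, so that fixing the weight isolates a single homogeneous solution and renders $Y_\ve^\pm$ well defined up to remainder.

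The main obstacle is this last rigorous step. One must verify that every neglected contribution---the deviation of $\phi$ from $\tfrac{\kappa}{2}x_1^2$, the terms $\partial_{x_1}^2 y_2$ and $i\ve y_2$, and the curvature corrections in the normal derivative on $S$---is genuinely subordinate, i.e. produces right-hand sides in the weighted space with the improved index, so that \eqref{llll} closes the argument through a Neumann series or a fixed-point iteration in $\mathcal V^2_{\gamma_\pm}$. A second delicate point is the dependence on $\ve$: one must fix a single branch of $\sqrt{\lambda_\ve}$ so that $\Lambda^+_\ve$ and $\Lambda^-_\ve$ remain distinct and the weight thresholds $\mp\,\im\sqrt{\lambda_\ve}/2$ consistently isolate one exponent each as $\ve\to0$. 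This is where $\lambda_\ve\neq0$ is essential, and it is precisely the content established in \cite{KaM}, Theorem 4.4.
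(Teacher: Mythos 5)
The paper contains no proof of this statement: Theorem \ref{vv} is imported verbatim from \cite{KaM} (Theorem 4.4 there), with only the remark that it ``can be reformulated in our context''. So there is no internal argument to compare yours against, and your reconstruction has to be judged on its own. On that basis it is essentially correct, and it is the route one expects the cited reference to take. Your formal computations all check out: averaging \eqref{b1}--\eqref{b3} over the cross-section does give $(\phi u_1')'+(\nu-2i\ve)u_1=0$ up to the subordinate term $i\ve\phi u_1$ (the surface condition contributing $\nu-i\ve$ and the body condition $-i\ve$); substituting $\phi=\kappa x_1^2/2$, $u_1=x_1^{-\Lambda}$ yields $\tfrac{\kappa}{2}\Lambda(\Lambda-1)+\nu-2i\ve=0$, whose roots are exactly $\Lambda^\pm_\ve=1/2\pm i\sqrt{\lambda_\ve}$ with $\lambda_\ve$ as in \eqref{lambda66}; and the Neumann data $Q'(0)=-\tfrac{\kappa}{2}(\nu-i\ve)$, $Q'(1)=-\tfrac{\kappa}{2}(\kappa\Lambda+i\ve)$, the constant $Q''=-\tfrac{\kappa^2}{4}\Lambda(\Lambda+1)$, the compatibility relation reproducing the indicial equation, and the zero-mean property are all satisfied by the explicit polynomial \eqref{QQ}. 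The one soft spot is your closing step: Theorem \ref{nfred} is only an a priori estimate and Theorem \ref{pred2} is logically downstream of Theorem \ref{vv} (its representation \eqref{exp} is phrased in terms of the very functions $Y_\ve^\pm$ you are constructing), so quoting these two results does not by itself deliver \emph{existence} of the remainder $R^\pm$ in the improved weighted class. What is actually needed is the isomorphism/Fredholm property of the model cusp operator between weighted spaces whose indices lie in the strip free of pencil eigenvalues (the Kondratiev-type theory of \cite{KMR1}, carried out for quadratic cusps in \cite{KaM}), applied to the residual data, which your computation correctly shows gains a factor $x_1^2$. You flag this yourself as the main obstacle and correctly attribute it to \cite{KaM}, so the gap is one of attribution rather than of substance; but as written the last paragraph leans on theorems of this paper that do not quite carry that weight.
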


\begin{rem} \label{vip} Let us notice that $Y_\ve^\pm
\in {\mathcal{V}^2_{\sigma_\pm}(\Omega_{\delta}^+)},\ \ \forall
\sigma_\pm>\mp \im \sqrt{\lambda_\ve}/2+1/2$. It will be  useful in what
follows to use another representation for $Y_\ve^\pm$ instead
of \eqref{igriki}, namely \be
Y_\ve^\pm=\textbf{v}^\pm_\ve+\widetilde{\textbf{\textit{v}}}^\pm_\ve,
\ee where \be\label{sss}
\textbf{v}^\pm_\ve(x)=|x_1|^{-\Lambda^\pm_\ve}+|x_1|^{2-\Lambda^\pm_\ve}Q_\ve^\pm\left(x_2/\phi(x_1)\right), \ee
and
$$\widetilde{\textbf{\textit{v}}}^\pm_\ve\in{\mathcal{V}^2_{\gamma_\pm}(\Omega_{\delta}^+)},\ \ \forall \gamma_\pm>\mp \im \sqrt{\lambda_\ve}/2.$$
\end{rem}
\begin{rem} \label{vip2} In the case $\lambda_\ve=0$, i.e $\ve=0$ and $\nu=\kappa/8$, see \eqref{lambda66}, functions  $Y_\ve^\pm$ do still exist and   belong to $ {\mathcal{V}^2_{\sigma_\pm}(\Omega_{\delta}^+)},\ \ \forall
\sigma_\pm>1/2$. We have the following representation for them,
 \be
Y_0^\pm=\textbf{v}^\pm+\widetilde{\textbf{\textit{v}}}^\pm,
\ee where \be\label{sss5}
\textbf{v}^-(x)=|x_1|^{-1/2}+|x_1|^{3/2}Q_0^-\left(x_2/\phi(x_1)\right), \ee
\be\label{sss6}
\textbf{v}^+(x)=|x_1|^{-1/2}\ln{|x_1|}+|x_1|^{3/2}\ln{|x_1|}Q_0^+\left(x_2/\phi(x_1)\right)\ee
$$+|x_1|^{3/2}Q\left(x_2/\phi(x_1)\right),
$$
and
$$\widetilde{\textbf{\textit{v}}}^\pm\in{\mathcal{V}^2_{\gamma}(\Omega_{\delta})},\ \ \forall \gamma>0.$$
\end{rem}
Here $Q_0^\pm$ is defined according to \eqref{QQ} ($Q_0^-=Q_0^+$ in this case)
 and
 \be Q(z)=-\frac{\kappa^2}{2}\left(\frac{z^2}{2}-\frac{1}{6}\right).
 \ee

As we have seen, the structure of the  solution crucially depends on the relation between $\nu$ and $\kappa/8$ (which determines the real part of $\lambda_\ve$ according to \eqref{lambda66}).
Let us start from the most singular case $\nu>\kappa/8$.

Let us check the implications of the above theorems for the solution
$v_\ve$ of the boundary value problem \eqref{a1}-\eqref{a3}, under
assumption that the pair
$$(f,g)\in \mathcal{W}_{0}^{0}(\Omega_\tau^+)\times
\mathcal{W}_{0}^{1/2}(\Upsilon_\tau^+)$$
 and have compact support.
It follows from
Theorems \ref{pred2}, \ref{vv}   and Remark \ref{vip} that
$$v_\ve=d_\ve\textbf{v}^{+}_\ve+c_\ve\textbf{v}^-_\ve+w_\ve,\ \ \text{in}\  \Omega_{\tau}^+, $$
 where $w_\ve\in \mathcal{V}_{0}^{2}(\Omega_{\tau}^+)$ and $c_\ve$, $d_\ve$ are some constants.
 Moreover, Theorem \ref{vv} provides  an information about functions $\textbf{v}^{+}_\ve$ and $\textbf{v}^-_\ve$,  i.e.
$$ \textbf{v}^+_\ve(x)=O\left(x_1^{-1/2-i(\lambda_0-4i\ve\kappa^{-1})^{1/2}}\right),\ \textbf{v}^-_\ve(x)=O\left(x_1^{-1/2+i(\lambda_0-4i\ve\kappa^{-1})^{1/2}}\right),\ \text{as}\  x_1\rightarrow 0,
$$
and consequently $d_\ve=0$ since $v^\ve \in H^1(\Omega)$.

It is easy to see that $w^\ve\in \mathcal{V}_{0}^{2}(\Omega_{\tau}^+)$  solves the problem:
    \be
    \Delta w^\ve=f-
    \big(\Delta -i\ve \big)c_\ve\textbf{v}^-_\ve +i\ve w^\ve,\ \text{in}\
    \Omega_\tau^+, \label{c1}
\ee \be
    \partial_n w^\ve =
    g_1-\big(\partial_n +i\ve\big)c_\ve\textbf{v}_\ve^--i\ve w^\ve, \ \text{on}\   S\cap \Upsilon_\tau^+,
    \label{c2}
\ee
    \be
    \big(\partial_n -\nu \big)w^\ve=
    g_2-\big(\partial_n -\nu+i\ve\big)c_\ve\textbf{v}_\ve^--i\ve w^\ve
    ,\ \text{on}\
     \Gamma\cap\Upsilon_\tau^+. \label{c3}
    \ee
Clearly $R_\ve:=\left(\big(\Delta -i\ve \big)\textbf{v}^-_\ve
,\{\big(\partial_n +i\ve\big)\textbf{v}_\ve^-,\big(\partial_n
-\nu+i\ve\big)\textbf{v}_\ve^-\}\right)\in\mathcal{W}_{\gamma}^{0}(\Omega_\tau^+)\times
\mathcal{W}_{\gamma}^{1/2}(\Upsilon_\tau^+)$,  $\forall \gamma>
2^{-1} \im \sqrt{\lambda_\ve }$. Moreover, for any $\gamma>0$, its norm is uniformly bounded with
respect to $\ve$, due to explicit expression for $\textbf{v}^-_\ve$,
see \eqref{sss}. Next, applying Theorem \ref{nfred} (with $\ve=0$) we obtain,
\be \label{wwww}
\|w_\ve\|_{\mathcal{V}^2_\gamma(\Omega_{\delta/2}^+)}\leq c\left(
\|f\|_{\mathcal{W}^{0}_{\gamma}(\Omega_{\delta}^+)}+
\|g\|_{\mathcal{W}^{1/2}_{\gamma}(\Upsilon_\delta^+)}+
\|w_\ve\|_{L_2(\Omega_{\delta}^+\setminus \Omega_{\delta/2}^+)}+|c_\ve|+\ve\|w_\ve\|_{\mathcal{W}^{0}_{\gamma}(\Omega_{\delta}^+)}\right),
 \ee
for any $\gamma>0$ such that $\gamma\neq
1/2-1/2 \im \sqrt{\lambda_\ve }$. Consequently for any $\gamma>0$, $\gamma\neq 1/2$  and for sufficiently small $\ve>0$, we have,
\be \label{w43}
\|w_\ve\|_{\mathcal{V}^2_\gamma(\Omega_{\delta/2}^+)}\leq c\left(
\|f\|_{\mathcal{W}^{0}_{\gamma}(\Omega_{\delta}^+)}+
\|g\|_{\mathcal{W}^{1/2}_{\gamma}(\Upsilon_\delta^+)}+
\|w_\ve\|_{L_2(\Omega_{\delta}^+\setminus \Omega_{\delta/2}^+)}+|c_\ve|\right).
 \ee
Let us emphasise that constant $c$ in the above formula is independent $\ve, f, g, c_\ve$ and $w_\ve$.

Following the same reasoning, for $v_\ve$ in $\Omega_\tau^-$ we obtain,
\be
      v_\ve(x)= b_\ve \textbf{v}^-_\ve(x) +w_\ve(x), \    x \in \Omega_\tau^-,
\ee
where $b_\ve$ is some constant, $w_\ve\in \mathcal{V}^2_\gamma(\Omega_{\tau}^-)$ for any $\gamma>0$, and
\be \label{w44}
\|w_\ve\|_{\mathcal{V}^2_\gamma(\Omega_{\delta/2}^-)}\leq c\left(
\|f\|_{\mathcal{W}^{0}_{\gamma}(\Omega_{\delta}^-)}+
\|g\|_{\mathcal{W}^{1/2}_{\gamma}(\Upsilon_\delta^-)}+
\|w_\ve\|_{L_2(\Omega_{\delta}^-\setminus \Omega_{\delta/2}^-)}+|b_\ve|\right),
 \ee
where constant $c$  is independent $\ve, f, g, b_\ve$ and $w_\ve$.

Now let us consider $v_\ve$ in $\Omega\setminus B_{2N}$. Using Theorem \ref{lsem2} (we choose $N$ such that $\text{supp} \ g \subset (-N,N)$ ) we obtain,
\be
    v_\ve=b_1^\ve
U_1^\ve+b_2^\ve U_2^\ve + \widetilde{w}_\ve,
\ee
and
\be\label{ocep33} |b_1^\ve|+|b_2^\ve|+\|\tilde{u}_\ve\|_{\dot{H}^2(\Omega\setminus B_{2N})}
\leq c\left(\|f\|_{W^0_{\beta,1}}+
\|w_\ve\|_{L_2(\Omega\cap B_N)}\right),\ee
where constant $c$ in the above formula is independent of $\ve, f, g$ and $w_\ve$.

In the intermediate region, say $\Omega\cap B_{2N}\setminus \Omega_{\delta/2}$, we apply the usual elliptic estimates, yielding
\be \label{5z}\|v^\ve\|_{H^2(\Omega\cap B_{2N}\setminus \Omega_{\delta/2})}\leq c
\left( \|f\|_{L_2(\Omega\cap B_{4N}\setminus \Omega_{\delta/4})}+\|g\|_{H^{1/2}(\partial \Omega)}+\|v^\ve\|_{L_2(\Omega\cap B_{4N}\setminus \Omega_{\delta/4})} \right),
\ee
 where $c$ obviously does not depend on $\ve$.

 Now we are going to combine estimates \eqref{w43},\eqref{w44},\eqref{ocep33} and \eqref{5z}. With this purpose we introduce the following weighted space
\be    \mathcal{H}^2_{\gamma} (\Omega):=\{ \widetilde{v}\in H_{loc}^2(\overline{\Omega}\setminus O):\widetilde{v}\in \mathcal{V}^2_\gamma(\Omega_{\tau}),  \widetilde{v}\in \dot{H}^2(\Omega\setminus B_{\tau/2})\},
\ee
$$   \|\widetilde{v}\|_{\mathcal{H}^2_{\gamma} (\Omega)}^2:=\|\widetilde{v}\|_{\mathcal{V}^2_\gamma(\Omega_{\tau})}^2+\|\widetilde{v}\|_{\dot{H}^2(\Omega\setminus B_{\tau/2})}^2,
$$
and
a space with ``detached asymptotics '':
 \be   v\in \mathbb{H}^2_{\gamma,\ve} (\Omega)\Leftrightarrow v= \sum_{j=1}^4 c_j U_j^\ve+\widetilde{v},\ \ \ \widetilde{v}\in  \mathcal{H}^2_{\gamma} (\Omega),\ \ c_j\in\mathbb{C}, \ j=1,..4.
 \ee
Here $U_1^\ve$ and $U_2^\ve$ are as introduced in \eqref{u1}, and
\be  \label{k5} U_3^\ve(x):=\zeta^+_\tau(x)\textbf{v}^-_\ve(x),\ \ \ U_4^\ve(x):=\zeta^-_\tau(x)\textbf{v}^-_\ve(x),
\ee
 $\zeta^\pm_\tau\in C^\infty (\overline{\Omega}\setminus O)$ are a cut-off functions, such that $\zeta^\pm_\tau=1$ in $\Omega_{\tau/2}^\pm$ and $\zeta^\pm_\tau=0$ in $\Omega\setminus\Omega_{\tau}^\pm$.

 We will refer to $\mathbb{H}^2_{\gamma,0} (\Omega)$ as \textit{space with radiation conditions at the infinity and at the origin}.

Finally, we obtain:
\begin{lemma} \label{got1}Let  $\{f,g\}\in \mathcal{W}^0_{0}(\Omega)\times \mathcal{W}^{1/2}_{0}(\partial \Omega)$ and have compact support.
 Then for  any $\ve>0$ the unique solution $v_\ve\in V$ of the problem \eqref{a1}-\eqref{a3} ensured   by Lemma \ref{exist}, belongs to the space
 $ \mathbb{H}^2_{\gamma,\ve} (\Omega)$ for any $\gamma>0$, in particular
 \be  \label{3l} v_\ve= \sum_{j=1}^4 c_j^\ve U_j^\ve+\widetilde{v}_\ve.
 \ee
 Moreover, for any $\gamma\in(0,1/2)$ and sufficiently small $\ve$, the following estimate is  valid,
\be\label{glavoc} \|\tilde{v}_\ve\|_{\mathcal{V}^2_\gamma(\Omega_{\tau})} +\|\tilde{v}_\ve\|_{\dot{H}^2(\Omega\setminus B_{\tau/2})}
\ee
$$
\leq c \left( \|f\|_{\mathcal{W}^0_{\gamma}(\Omega{})}+\|g\|_{\mathcal{W}^{1/2}_{\gamma}(\partial \Omega)}+\|v_\ve\|_{L_2(\Omega\cap  B_{4N}\setminus \Omega_\tau)}+\sum_{j=1}^4|c_j^{\,\ve}| \right),
$$
where $c$ does not depend on $f,g,c_j$ and $\ve$.
\end{lemma}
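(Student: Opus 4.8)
The plan is to glue the four local representations of $v_\ve$ that have already been extracted into the single global decomposition \eqref{3l}, and then to add up the corresponding local estimates; the only real work is the bookkeeping of the intermediate regions and the uniformity in $\ve$. Concretely, Theorems \ref{pred2}, \ref{vv} and Remark \ref{vip} give, near the right cusp, $v_\ve=c_\ve\textbf{v}^-_\ve+w_\ve$ with $w_\ve\in\mathcal{V}^2_0(\Omega_\tau^+)$ (the $\textbf{v}^+_\ve$ coefficient being forced to vanish by $v_\ve\in H^1(\Omega)$), near the left cusp $v_\ve=b_\ve\textbf{v}^-_\ve+w_\ve$, and, by Theorem \ref{lsem2} applied in $\Omega\setminus B_{2N}$, $v_\ve=b_1^\ve U_1^\ve+b_2^\ve U_2^\ve+\widetilde w_\ve$. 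Setting $c_1^\ve:=b_1^\ve$, $c_2^\ve:=b_2^\ve$, $c_3^\ve:=c_\ve$, $c_4^\ve:=b_\ve$ and recalling \eqref{k5}, I define $\widetilde v_\ve:=v_\ve-\sum_{j=1}^4 c_j^\ve U_j^\ve$, which is exactly \eqref{3l}.

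Next I would verify $\widetilde v_\ve\in\mathcal{H}^2_\gamma(\Omega)$ region by region. On $\Omega^\pm_{\tau/2}$, where $\zeta^\pm_\tau\equiv1$ and $U_1^\ve=U_2^\ve=0$, one has $\widetilde v_\ve=w_\ve\in\mathcal{V}^2_\gamma$; on the transition annulus of $\zeta^\pm_\tau$, which is separated from $O$, the correction $(1-\zeta^\pm_\tau)\textbf{v}^-_\ve$ is smooth and again lies in $\mathcal{V}^2_\gamma$. At infinity $\widetilde v_\ve=\widetilde w_\ve\in\dot H^2(\Omega\setminus B_{2N})$, and on the remaining bounded collar elliptic regularity (as in Lemma \ref{exist}) gives $\widetilde v_\ve\in H^2$, which matches both weighted norms there since their weights are bounded above and below away from $O$ and from infinity. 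Hence $\widetilde v_\ve\in\mathcal{H}^2_\gamma(\Omega)$ for every $\gamma>0$, i.e. $v_\ve\in\mathbb{H}^2_{\gamma,\ve}(\Omega)$.

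For the estimate I would add \eqref{w43}, \eqref{w44}, \eqref{ocep33} and \eqref{5z}. I would first fix $\tau$ small enough that $\Omega_\tau\subset\Omega_{\delta/4}$ and $\Omega_\delta\subset B_{2N}$, so that the weighted cusp estimates already cover $\Omega_\tau^\pm$ while every remainder set on their right-hand sides — the annuli $\Omega_\delta^\pm\setminus\Omega_{\delta/2}^\pm$ together with the bounded $L_2$-regions of \eqref{ocep33} and \eqref{5z} — lies in the single bounded annulus $\Omega\cap B_{4N}\setminus\Omega_\tau$, separated from both $O$ and infinity. In each such set I substitute $w_\ve=v_\ve-c_j^\ve\textbf{v}^-_\ve$ and use that $\|\textbf{v}^-_\ve\|_{L_2}$ and $\|U_{1,2}^\ve\|_{L_2}$ over these regions are bounded uniformly in small $\ve$ (from the explicit formulas \eqref{sss}, \eqref{u1}, \eqref{u2}); then every intermediate $L_2$-norm is dominated by $\|v_\ve\|_{L_2(\Omega\cap B_{4N}\setminus\Omega_\tau)}+\sum_{j}|c_j^\ve|$, any residual overlap with $\Omega_\tau$ being absorbed into the weighted left-hand norm. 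The $\dot H^2(\Omega\setminus B_{\tau/2})$ part of the left-hand side is reassembled from \eqref{ocep33} at infinity, \eqref{5z} on the collar, and the cusp estimates on $\Omega_{\delta/2}^\pm\setminus B_{\tau/2}$, where $\mathcal{V}^2_\gamma$ and $\dot H^2$ are equivalent. Restricting to $\gamma\in(0,1/2)$ keeps us off the exceptional exponents $1/2-\frac{1}{2}\im\sqrt{\lambda_\ve}$ of Theorems \ref{nfred}, \ref{pred2}; and since each ingredient carries an $\ve$-independent constant for small $\ve$ (the term $\ve\|w_\ve\|_{\mathcal{W}^0_\gamma}$ being absorbed when passing from \eqref{wwww} to \eqref{w43}), the resulting constant is independent of $\ve$. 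This yields \eqref{glavoc}.

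The main obstacle is precisely this geometric bookkeeping together with the $\ve$-uniformity. The $L_2$-remainders produced by the cusp estimates live on annuli $\Omega_\delta^\pm\setminus\Omega_{\delta/2}^\pm$ that would sit inside $\Omega_\tau$ unless $\delta$ is calibrated large relative to $\tau$, so one must choose $\tau,\delta$ and route these norms through the elliptic estimate \eqref{5z} so as to express them through $v_\ve$ on the target annulus $\Omega\cap B_{4N}\setminus\Omega_\tau$ \emph{without} reproducing the weighted left-hand norm on the right; crucially $v_\ve$, not $\widetilde v_\ve$, must survive there, since that bounded $L_2$-term is exactly what a later compactness argument will dispose of in the limit $\ve\to0$. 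Guaranteeing one constant uniform simultaneously in $\ve,f,g$ and the $c_j^\ve$ is what forced the earlier use of Theorem \ref{lsem2} in place of Lemma \ref{l55} and the absorption step behind \eqref{w43}.
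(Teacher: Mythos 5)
Your proposal is correct and follows essentially the same route as the paper: the lemma is presented there without a separate proof, as the direct combination of the local representations near the cusps (Theorems \ref{pred2}, \ref{vv}, Remark \ref{vip}), the representation at infinity (Theorem \ref{lsem2}), and the estimates \eqref{w43}, \eqref{w44}, \eqref{ocep33}, \eqref{5z}, which is exactly what you assemble. Your explicit bookkeeping of the intermediate annuli, the calibration of $\tau$ against $\delta$, and the $\ve$-uniformity supplies details the paper leaves implicit, but introduces no new idea.
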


 In order to pass to the limit in \eqref{a1}-\eqref{a3}, we need to demonstrate that the ``extra" quantity which appears on the right hand side of \eqref{glavoc}, namely
 $$b_\ve:=\|v_\ve\|_{L_2(\Omega\cap  B_{4N}\setminus \Omega_\tau)}+\sum_{j=1}^4|c_j^{\,\ve}|\, ,$$
 is bounded.

 \begin{lemma} \label{occom} Under Condition 1, we have
\end{lemma}

 \be b_\ve\leq c \left( \|f\|_{\mathcal{W}^0_{\gamma}(\Omega{})}+\|g\|_{\mathcal{W}^{1/2}_{\gamma}(\partial \Omega)} \right),
 \ee
 where $c$ does not depend on $\ve$, $f$ and $g$.
 \begin{proof}
 Let us assume that $b_\ve$ is not bounded, then there exists a subsequence $\ve_{k}$, such that
$b_{\ve_k}>k\left( \|f\|_{\mathcal{W}^0_{\gamma}(\Omega{})}+\|g\|_{\mathcal{W}^{1/2}_{\gamma}(\partial \Omega)} \right),\ k=1,2,...$. Consider a ``normalised" subsequence of $\tilde{v}^\ve$ , $\tilde{u}^{\ve_k}:=\frac{\tilde{v}^{\ve_k}}{ b_{\ve_k}}$ (which we still denote $\tilde{u}^{\ve}$).
The following representation is now valid for this subsequence, cf. \eqref {3l}:
 $$ u_\ve=\tilde{u}_\ve+\sum_{j=1}^4\alpha^\ve_jU_j^{\ve},$$
 and
 \be\label{nnn}\|u_\ve\|_{L_2(\Omega\cap  B_{4N}\setminus \Omega_\tau)}+\sum_{j=1}^4|\alpha_j^{\,\ve}|=1.
 \ee

 Then it follows from \eqref{glavoc}    that we can choose a subsequence (which we still denote $u_\ve$) such that $u_\ve \stackrel{\text{rad}}\rightharpoonup u $ as $\ve\rightarrow 0$. Here the ``weak convergence with radiation conditions", denoted  $\stackrel{\text{rad}}\rightharpoonup$, is understood in the following sense:

 1.   $u$  can be represented as
  $$ u=\tilde{u}+\sum_{j=1}^4\alpha_jU_j^{0},\ \
 $$

 2. $\alpha_j^\ve\rightarrow \alpha_j$, $\tilde{u}_\ve$ converges weakly to $\tilde{u}$ in $ \mathcal{H}^2_{\gamma} (\Omega)$, for any $\gamma>0$,  $U_j^\ve\rightarrow U_j^0$ in $H^2(K)$, where $K$ is any compact set, not containing the singularity point $O$.

 Let us notice that convergence of $U_j^\ve$  to $U_j^0$ follows from explicit expressions for $U_1^\ve$, $U_2^\ve$  see \eqref{u1}, \eqref{u2}, and explicit formulae for  $U_3^\ve$, $U_4^\ve$  see formulae \eqref{k5} and Remark \ref{vip} to Theorem \ref{vv}.
 This allows us to pass to the limit in the  problem \eqref{a1}--\eqref{a3}  (with $u_\ve$ instead of $v_\ve$ and $\frac{1}{ b_{\ve}} \{f,g\}$ instead of $\{f,g\}$) and conclude that $u\in  \mathbb{H}^2_{\gamma,0} (\Omega)$ for any $\gamma>0$ and is a solution of homogeneous problem. Standard trick with integration by parts (see Remark \ref{rem8} below) shows that $\alpha_j=0$, $j=1,..4$,  and consequently $u\in  \mathcal{H}^2_{\gamma} (\Omega)$ for any $\gamma>0$ and in particular $u\in V$ , which, due to Condition 1, implies $u=0$. This contradicts  to \eqref{nnn}, since clearly weak convergence of $\tilde{u}_\ve$ in $  \mathcal{H}^2_{\gamma} (\Omega)$ for any $\gamma>0$ and convergence of $\alpha_j^\ve, \ j=1,..4,$ imply the strong convergence of $u_\ve$ in $L_2$ on compact sets.
\end{proof}

Finally, combining Lemma \ref{exist}, Lemma \ref{got1}, and then treating $v_\ve$ in the same way as $u_\ve$ in Lemma \ref{occom},  we arrive at the following results.

\begin{theorem} \label{toer} Suppose Condition 1 is satisfied and $\nu>\kappa/8$. Assume further that
$\{f,g\}\in \mathcal{W}^0_{0}(\Omega)\times \mathcal{W}^{1/2}_{0}(\partial \Omega)$ and have compact support.
 Then there is  the unique solution of the problem \eqref{w1}-\eqref{w3}, $v\in \mathbb{H}^2_{\gamma,0} (\Omega)$ for any $\gamma>0$,
  in particular
 \be  v= \sum_{j=1}^4 c_j U_j^{0}+\widetilde{v},
 \ee
and for any $\gamma\in(0,1/2)$, the following estimate is  valid,
\be\label{glavoc1} \sum_{j=1}^4|c_j|+\|\tilde{v}\|_{\mathcal{V}^2_\gamma(\Omega_{\tau})} +\|\tilde{v}\|_{\dot{H}^2(\Omega\setminus B_{\tau/2})}
\ee
$$
\leq c \left( \|f\|_{\mathcal{W}^0_{\gamma}(\Omega{})}+\|g\|_{\mathcal{W}^{1/2}_{\gamma}(\partial \Omega)} \right),
$$
where $c$ does not depend on $f$ and $g$.

Moreover, this solution can be obtained as the result of limiting absorption procedure, namely
 $v^\ve\stackrel{\text{rad}}\rightharpoonup v$ where $v_\ve$ is a solution of \eqref{a1}-\eqref{a3}.
\end{theorem}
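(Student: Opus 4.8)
The plan is to assemble the three preceding lemmas and then run a limiting absorption argument identical in spirit to the one in the proof of Lemma \ref{occom}, but now applied to $v_\ve$ itself rather than to a normalised sequence. First I would invoke Lemma \ref{exist} to produce, for each $\ve>0$, the unique solution $v_\ve\in\mathbf{V}$ of \eqref{a1}--\eqref{a3}, and Lemma \ref{got1} to record that $v_\ve\in\mathbb{H}^2_{\gamma,\ve}(\Omega)$ with the detached-asymptotics representation \eqref{3l} and the estimate \eqref{glavoc}. The decisive input is Lemma \ref{occom}: it bounds $b_\ve=\|v_\ve\|_{L_2(\Omega\cap B_{4N}\setminus\Omega_\tau)}+\sum_{j=1}^4|c_j^\ve|$ uniformly in $\ve$ by $\|f\|_{\mathcal{W}^0_\gamma(\Omega)}+\|g\|_{\mathcal{W}^{1/2}_\gamma(\partial\Omega)}$. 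Feeding this back into \eqref{glavoc} yields, for $\gamma\in(0,1/2)$, a bound on $\|\tilde v_\ve\|_{\mathcal{V}^2_\gamma(\Omega_\tau)}+\|\tilde v_\ve\|_{\dot H^2(\Omega\setminus B_{\tau/2})}$ and on each $|c_j^\ve|$ that is uniform in $\ve$.

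With these uniform bounds in hand I would extract a subsequence along which $v_\ve\stackrel{\text{rad}}\rightharpoonup v$ in the sense introduced in the proof of Lemma \ref{occom}: the coefficients converge, $c_j^\ve\to c_j$, the remainders converge weakly, $\tilde v_\ve\rightharpoonup\tilde v$ in $\mathcal{H}^2_\gamma(\Omega)$ for every $\gamma>0$, and $U_j^\ve\to U_j^0$ in $H^2(K)$ on every compact $K\not\ni O$ (this last convergence being read off the explicit formulae \eqref{u1}, \eqref{u2}, \eqref{k5} together with Remark \ref{vip}). Since weak convergence of $\tilde v_\ve$ together with convergence of the $c_j^\ve$ forces strong $L_2$ convergence of $v_\ve$ on compact sets, I can pass to the limit in the variational identity \eqref{varr} as $\ve\to 0$ and conclude that $v=\sum_{j=1}^4 c_j U_j^0+\tilde v\in\mathbb{H}^2_{\gamma,0}(\Omega)$ solves \eqref{w1}--\eqref{w3}. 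The estimate \eqref{glavoc1} then follows by combining the bound of Lemma \ref{occom} with \eqref{glavoc} and using weak lower semicontinuity of the norms under $\stackrel{\text{rad}}\rightharpoonup$. The genuine difficulty here has already been absorbed into Lemma \ref{occom} (it is precisely the uniform control of $b_\ve$, which in turn rests on Condition 1); once that bound is available the passage to the limit is essentially bookkeeping, the only point requiring care being that the limit retains the outgoing radiation structure, guaranteed by the convergence $U_j^\ve\to U_j^0$.

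For uniqueness, suppose $v^{(1)},v^{(2)}\in\mathbb{H}^2_{\gamma,0}(\Omega)$ both solve \eqref{w1}--\eqref{w3}; their difference $w=v^{(1)}-v^{(2)}$ is a solution of the homogeneous problem lying in $\mathbb{H}^2_{\gamma,0}(\Omega)$, so $w=\sum_{j=1}^4\alpha_j U_j^0+\tilde w$ with $\tilde w\in\mathcal{H}^2_\gamma(\Omega)$. I would apply the integration-by-parts (energy-flux) identity of Remark \ref{rem8}: evaluating the imaginary part of Green's identity on a domain truncated near infinity and near the cusp, all interior and finite-boundary contributions cancel for a homogeneous solution, leaving only the flux carried by the outgoing waves $U_j^0$; because all four are outgoing, these flux terms have a single sign and their vanishing forces $\alpha_j=0$ for $j=1,\dots,4$. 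Consequently $w=\tilde w\in\mathcal{H}^2_\gamma(\Omega)\subset V$, and Condition 1 gives $w=0$. Finally, the limiting absorption statement is exactly the $\stackrel{\text{rad}}\rightharpoonup$ convergence established in the existence step, and uniqueness shows the limit is independent of the chosen subsequence, so the whole family $v_\ve$ converges.
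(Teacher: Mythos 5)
Your proposal is correct and follows essentially the same route as the paper, which proves this theorem simply by "combining Lemma \ref{exist}, Lemma \ref{got1}, and then treating $v_\ve$ in the same way as $u_\ve$ in Lemma \ref{occom}" — i.e.\ exactly the uniform bound on $b_\ve$, the subsequence extraction in the $\stackrel{\text{rad}}\rightharpoonup$ sense, passage to the limit in the variational identity, and the integration-by-parts argument of Remark \ref{rem8} plus Condition 1 for uniqueness that you describe. The details you fill in (lower semicontinuity for \eqref{glavoc1}, uniqueness forcing convergence of the whole family) are the intended ones.
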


If $\nu=\kappa/8$  then we apply  arguments as above, employing Remark \ref{vip2} instead of Theorem \ref{vv}. However  in order to obtain the result of the Theorem \ref{toer} we need to employ a stronger condition.

\textbf{Condition 1$'$.} Homogeneous problem \eqref{w1}-\eqref{w3} does not have non-trivial solutions in the space $V'(\Omega)=\{u: u=c^+x_1^{-\frac{1}{2}}\zeta_\tau^++c^-|x_1|^{-\frac{1}{2}}\zeta_\tau^-+\widetilde{u},\ \ c^\pm\in\mathbb{C}, \  \int_\Omega |\nabla \widetilde{u}|^2dx+\int_{\partial\Omega}  |\widetilde{u}|^2ds<+\infty\}$.

The difference comes from the fact that we are able only to prove, in the analogue of Lemma \ref{occom}, that solution of homogeneous problem from  the space with radiation conditions   is actually in energy space $V'(\Omega)$. (For the case  $\nu>\kappa/8$ we were able to deduce that solution is in $V(\Omega)$.) As a  result we conclude,
\begin{corollary} \label{toercrit} If Condition 1$'$ is satisfied then condition $\nu>\kappa/8$ in Theorem \ref{toer} can be relaxed to $\nu\geq\kappa/8$.
\end{corollary}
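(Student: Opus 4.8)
The plan is to re-run the limiting-absorption argument that produced Theorem \ref{toer}, now with the degenerate value $\nu=\kappa/8$ (so that $\lambda_0=0$ in \eqref{lambda66}) in place of $\nu>\kappa/8$, and to track where the borderline cusp exponent forces the weaker Condition 1$'$. For every fixed $\ve>0$ we still have $\lambda_\ve=-4i\ve/\kappa\neq 0$, so Theorems \ref{nfred}, \ref{pred2} and \ref{vv} apply verbatim: Lemma \ref{exist} gives $v_\ve\in V$, the cusp profiles $U_3^\ve,U_4^\ve$ are again built from the $H^1$-admissible branch $\textbf{v}^-_\ve$ (the coefficient of the more singular branch vanishing because $v_\ve\in H^1(\Omega)$), and the decomposition and uniform estimate of Lemma \ref{got1} are reproduced. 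The only structural change occurs in the limit $\ve\to 0$: by Remark \ref{vip2} the profiles converge to $U_3^0=\zeta_\tau^+\textbf{v}^-$ and $U_4^0=\zeta_\tau^-\textbf{v}^-$ with $\textbf{v}^-\sim|x_1|^{-1/2}$ from \eqref{sss5}, i.e. a genuine power $-1/2$ carrying no oscillation. First I would check that the uniform-in-$\ve$ bounds behind \eqref{glavoc} survive the logarithmic corrections of Remark \ref{vip2}; these enter only through the excluded branch and through data-independent remainders, so the estimate is unaffected.

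The heart of the matter is the analogue of Lemma \ref{occom}, which I would prove by contradiction exactly as there: if $b_\ve$ is unbounded, normalise and extract a subsequence with $u_\ve\stackrel{\text{rad}}\rightharpoonup u$, where $u=\widetilde u+\sum_{j=1}^4\alpha_jU_j^0$ solves the homogeneous problem and satisfies \eqref{nnn}. The integration-by-parts identity of Remark \ref{rem8} is then applied on a domain truncated both near infinity and near the cusp. The waves $U_1^0,U_2^0$ at infinity carry strictly positive outgoing energy flux, so this step forces $\alpha_1=\alpha_2=0$ precisely as in the proof of Theorem \ref{toer}.

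The decisive obstacle, and the reason Condition 1$'$ is unavoidable, is that the same flux computation yields nothing at the cusp. Since $\lambda_0=0$, the cusp profiles $U_3^0,U_4^0\sim|x_1|^{-1/2}$ are real (the oscillatory factor $|x_1|^{\pm i\sqrt{\lambda_0}}$ present for $\nu>\kappa/8$ has degenerated to $1$), so $u\,\overline{\partial_n u}$ is real on the arcs shrinking to $O$ and the imaginary part of the flux vanishes identically; the identity therefore cannot force $\alpha_3=\alpha_4=0$. Consequently the limit is only $u=\widetilde u+\alpha_3U_3^0+\alpha_4U_4^0$, and after absorbing the energy-space correction $|x_1|^{3/2}Q_0^-$ into the remainder this is exactly an element of $V'(\Omega)$, with $c^\pm$ the surviving coefficients $\alpha_3,\alpha_4$. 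Here I would invoke Condition 1$'$ rather than Condition 1 to conclude $u=0$, contradicting \eqref{nnn} and proving $b_\ve$ bounded. With boundedness in hand, the passage to the limit $\ve\to 0$ and the final estimate follow verbatim as in Theorem \ref{toer}, now valid for $\nu=\kappa/8$; combined with the already-established case $\nu>\kappa/8$ this relaxes the hypothesis to $\nu\geq\kappa/8$. The main difficulty is thus conceptual rather than computational: recognising that the borderline power $|x_1|^{-1/2}$ carries zero cusp flux, so that the surviving cusp coefficients can be eliminated only by strengthening the uniqueness hypothesis from $V(\Omega)$ to $V'(\Omega)$.
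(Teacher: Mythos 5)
Your proposal is correct and follows essentially the same route as the paper: re-run the limiting absorption argument of Theorem \ref{toer} with Remark \ref{vip2} in place of Theorem \ref{vv}, and observe that the flux identity at the cusp no longer eliminates the coefficients of the threshold waves $|x_1|^{-1/2}\zeta_\tau^{\pm}$, so the limiting homogeneous solution can only be placed in $V'(\Omega)$ and Condition 1$'$ must be invoked instead of Condition 1. Your explanation of \emph{why} the cusp flux degenerates (the oscillatory factor $|x_1|^{\pm i\omega}$ collapses to $1$ at $\nu=\kappa/8$) is a correct elaboration of a point the paper leaves implicit.
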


If $\nu<\kappa/8$,  then there is no need to isolate waves in the cusp.
 Let us introduce the space with ``detached asymptotics" at the infinity only:
 \be   v\in \mathbb{F}^2_{\gamma,\ve} (\Omega)\Leftrightarrow v= \sum_{j=1}^2 c_j U_j^\ve+\widetilde{v}, \ \ \  \widetilde{v}\in \mathcal{H}^2_{\gamma} (\Omega),
 \ee
where $c_j\in\mathbb{C}$.

We have the following analog of Lemma \ref{got1}:
\begin{lemma} \label{got2}Let  $\{f,g\}\in \mathcal{W}^0_{1/2}(\Omega)\times \mathcal{W}^{1/2}_{1/2}(\partial \Omega)$ and have compact support.
 Then for  any $\ve>0$ the unique solution $v_\ve$ of the problem \eqref{a1}-\eqref{a3}, delivered by Lemma \ref{exist}, belongs to the space
 $ \mathbb{F}^2_{1/2,\ve} (\Omega)$, in particular
 \be  v_\ve= \sum_{j=1}^2 c_j^\ve U_j^\ve+\widetilde{v}_\ve.
 \ee
 Moreover, for  sufficiently small $\ve$, the following estimate is  valid,
\be\label{glavoc2} \|\tilde{v}_\ve\|_{\mathcal{V}^2_{1/2}(\Omega_{\tau})} +\|\tilde{v}_\ve\|_{\dot{H}^2(\Omega\setminus B_{\tau/2})}
\ee
$$
\leq c \left( \|f\|_{\mathcal{W}^0_{1/2}(\Omega{})}+\|g\|_{\mathcal{W}^{1/2}_{1/2}(\partial \Omega)}+\|v_\ve\|_{L_2(\Omega\cap  B_{4N}\setminus \Omega_\tau)}+\sum_{j=1}^2|c_j^{\,\ve}| \right),
$$
where $c$ does not depend on $\ve$, $f$ and $g$.
\end{lemma}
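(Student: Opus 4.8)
The plan is to repeat the derivation that precedes Lemma \ref{got1} almost verbatim, the only genuine change occurring at the two cusps. First, by Lemma \ref{exist} there is for each $\ve>0$ a unique $v_\ve\in\mathbf{V}$, and by interior elliptic regularity $v_\ve\in H^2(\Omega\setminus B_\sigma)$ for every $\sigma>0$. The finite–energy membership $v_\ve\in\mathbf{V}\subset H^1(\Omega)$ supplies $v_\ve\in\mathcal{W}^2_\gamma(\Omega_\tau^\pm)$ for $\gamma$ large enough, which is the entry hypothesis for Theorem \ref{pred2}; applying Theorems \ref{pred2} and \ref{vv} together with Remark \ref{vip} in $\Omega_\tau^\pm$ yields $v_\ve=d_\ve\textbf{v}^+_\ve+c_\ve\textbf{v}^-_\ve+w_\ve$ with $w_\ve\in\mathcal{V}^2_{1/2}(\Omega_\tau^\pm)$.

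The decisive point is the sign of $\lambda_0=2\nu/\kappa-1/4$, which is \emph{negative} for $\nu<\kappa/8$; see \eqref{lambda66}. As $\ve\to0$ the two cusp exponents tend to $-1/2\mp\sqrt{1/4-2\nu/\kappa}$ (cf.\ \eqref{j2}), so precisely one of the homogeneous solutions, namely $\textbf{v}^+_\ve\sim x_1^{-1/2-\sqrt{1/4-2\nu/\kappa}}$, fails to lie in $H^1$. Since $v_\ve\in\mathbf{V}\subset H^1(\Omega)$ I conclude $d_\ve=0$, exactly as $d_\ve=0$ was forced in the treatment preceding Lemma \ref{got1}. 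In contrast to the case $\nu>\kappa/8$, the surviving term $\textbf{v}^-_\ve\sim x_1^{-1/2+\sqrt{1/4-2\nu/\kappa}}$ is now regular: by Remark \ref{vip} one has $\textbf{v}^-_\ve\in\mathcal{V}^2_{\sigma_-}$ for every $\sigma_->\tfrac12+\tfrac12\im\sqrt{\lambda_\ve}$, and since $|\im\sqrt{\lambda_\ve}|\to\sqrt{1/4-2\nu/\kappa}>0$ the bound on the right is strictly below $1/2$, so the choice $\sigma_-=1/2$ is admissible, with $\mathcal{V}^2_{1/2}$–norm bounded uniformly in $\ve$ by the explicit formula \eqref{sss}. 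Consequently $c_\ve\textbf{v}^-_\ve$ may be absorbed into the regular part, giving $v_\ve\in\mathcal{V}^2_{1/2}(\Omega_\tau^\pm)$; no cusp waves $U_3^\ve,U_4^\ve$ need be detached, which is exactly why the target space is $\mathbb{F}^2_{1/2,\ve}$ rather than $\mathbb{H}^2_{\gamma,\ve}$.

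With $v_\ve\in\mathcal{V}^2_{1/2}(\Omega_\tau^\pm)$ secured, I apply Theorem \ref{nfred} with $\ve=0$, transferring the absorption terms $i\ve v_\ve$ to the right-hand sides of \eqref{b1}-\eqref{b3} as in the passage from \eqref{wwww} to \eqref{w43}; the extra contribution $\ve\|v_\ve\|_{\mathcal{W}^0_{1/2}}$ is absorbed into the left-hand side for $\ve$ small, producing an $\ve$–uniform local estimate. Here the weight $\gamma=1/2$ is admissible because the forbidden values $\tfrac12\pm\tfrac12\im\sqrt{\lambda_\ve}$ stay bounded away from $1/2$ uniformly for small $\ve$ — this is the one place the \emph{strict} inequality $\nu<\kappa/8$ is essential, and it is precisely why $\gamma=1/2$ is permitted here although Lemma \ref{got1} required $\gamma\neq1/2$. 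The far field and the intermediate annulus are treated exactly as before: Theorem \ref{lsem2}, with $N$ chosen so that $\mathrm{supp}\,g\subset(-N,N)$, furnishes the two outgoing waves in $v_\ve=b_1^\ve U_1^\ve+b_2^\ve U_2^\ve+\widetilde{w}_\ve$ with the $\ve$–uniform bound \eqref{ocep33}, and the standard elliptic estimate \eqref{5z} controls $v_\ve$ on $\Omega\cap B_{2N}\setminus\Omega_{\delta/2}$.

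Assembling the cusp, intermediate and far-field estimates as in the derivation of \eqref{glavoc} then gives the representation $v_\ve=\sum_{j=1}^2 c_j^\ve U_j^\ve+\widetilde{v}_\ve$ with $\widetilde{v}_\ve\in\mathcal{H}^2_{1/2}(\Omega)$ and the bound \eqref{glavoc2}. The main technical point to watch — and the only genuine departure from Lemma \ref{got1} — is the uniform-in-$\ve$ control of the surviving singular term $\textbf{v}^-_\ve$ in $\mathcal{V}^2_{1/2}$ and the verification that $\gamma=1/2$ remains strictly admissible for Theorem \ref{nfred} as $\ve\to0$; both reduce to the quantitative gap $\sqrt{1/4-2\nu/\kappa}>0$ guaranteed by $\nu<\kappa/8$.
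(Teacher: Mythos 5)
Your argument is correct and is essentially the paper's own: the paper states Lemma \ref{got2} only as "the analog of Lemma \ref{got1}", and your proof is exactly the intended adaptation — $d_\ve=0$ from $v_\ve\in H^1$, the surviving branch $\textbf{v}^-_\ve$ already lying in $\mathcal{V}^2_{1/2}$ uniformly in $\ve$ so that no cusp waves need be detached, $\gamma=1/2$ admissible in Theorem \ref{nfred} because the forbidden exponents $\tfrac12\pm\tfrac12\im\sqrt{\lambda_\ve}$ stay away from $\tfrac12$ when $\nu<\kappa/8$, and the far-field/intermediate estimates unchanged. Your branch bookkeeping for $\sqrt{\lambda_\ve}$ (continuation from $\ve>0$, giving $\im\sqrt{\lambda_\ve}<0$) is consistent with Remark \ref{vip} and with \eqref{j2}.
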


Employing similar arguments to the above, we arrive at:
 \begin{theorem} \label{toer2} Suppose Condition 1 is satisfied and $\nu<\kappa/8$. Assume further that
$\{f,g\}\in \mathcal{W}^0_{1/2}(\Omega)\times \mathcal{W}^{1/2}_{1/2}(\partial \Omega)$ and have compact support.
 Then there is  the unique solution of the problem \eqref{w1}-\eqref{w3}, $v\in \mathbb{F}^2_{1/2,0} (\Omega)$,
  in particular
 \be  v= \sum_{j=1}^2 c_j U_j^{0}+\widetilde{v},
 \ee
and  the following estimate is  valid,
\be\label{glavoc122} \sum_{j=1}^2|c_j|+\|\tilde{v}\|_{\mathcal{V}^2_{1/2}(\Omega_{\tau})} +\|\tilde{v}\|_{\dot{H}^2(\Omega\setminus B_{\tau/2})}
\ee
$$
\leq c \left( \|f\|_{\mathcal{W}^0_{1/2}(\Omega{})}+\|g\|_{\mathcal{W}^{1/2}_{1/2}(\partial \Omega)} \right),
$$
where $c$ does not depend on $f$ and $g$.

Moreover, this solution can be obtained as the result of limiting absorption procedure, namely
 $v^\ve\stackrel{\text{rad}}\rightharpoonup v$ in the space $ \mathbb{F}^2_{1/2,0}$, where $v_\ve$ is a solution of \eqref{a1}-\eqref{a3}.
\end{theorem}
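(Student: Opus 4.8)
The plan is to reproduce, almost verbatim, the argument that established Theorem \ref{toer}, the only structural simplification being that for $\nu<\kappa/8$ the singular cusp exponents $-1/2\pm\sqrt{1/4-2\nu/\kappa}$ are real and the less singular one lies in $(-1/2,0)$; consequently the solution is already harmless at the cusp in the energy sense, and we need only detach the two outgoing waves $U_1^0,U_2^0$ at infinity, which is exactly the content of the space $\mathbb{F}^2_{1/2,\ve}$. First I would invoke Lemma \ref{exist} to produce, for each $\ve>0$, the solution $v_\ve\in V$ of the absorptive problem \eqref{a1}-\eqref{a3}, and then Lemma \ref{got2} to obtain the representation $v_\ve=\sum_{j=1}^2 c_j^\ve U_j^\ve+\widetilde v_\ve$ together with the estimate \eqref{glavoc2}, whose right-hand side is controlled up to the quantity $b_\ve:=\|v_\ve\|_{L_2(\Omega\cap B_{4N}\setminus\Omega_\tau)}+\sum_{j=1}^2|c_j^\ve|$.

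The key step is the analogue of Lemma \ref{occom}: the uniform bound $b_\ve\lesssim \|f\|_{\mathcal{W}^0_{1/2}(\Omega)}+\|g\|_{\mathcal{W}^{1/2}_{1/2}(\partial\Omega)}$. I would argue by contradiction exactly as there. Assuming $b_\ve$ is unbounded, I extract a subsequence and normalise, setting $u_\ve:=\widetilde v_\ve/b_\ve$, so that $\|u_\ve\|_{L_2(\Omega\cap B_{4N}\setminus\Omega_\tau)}+\sum_{j=1}^2|\alpha_j^\ve|=1$; by \eqref{glavoc2} one may pass to a subsequence with $u_\ve\stackrel{\text{rad}}\rightharpoonup u$, where the coefficients converge, $\widetilde u_\ve$ converges weakly in $\mathcal{H}^2_{1/2}(\Omega)$, and the profiles $U_j^\ve$ converge to $U_j^0$ on compacta away from $O$. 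Since the data have been divided by $b_\ve\to\infty$, the limit $u\in\mathbb{F}^2_{1/2,0}(\Omega)$ solves the homogeneous problem \eqref{w1}-\eqref{w3}. The standard integration-by-parts (flux) identity at infinity, see Remark \ref{rem8}, then forces the outgoing coefficients $\alpha_1=\alpha_2=0$, whence $u=\widetilde u\in\mathcal{H}^2_{1/2}(\Omega)$.

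At this point the decisive observation, and the place where the restriction $\nu<\kappa/8$ is used, is that $\mathcal{H}^2_{1/2}(\Omega)\subset V(\Omega)$. Indeed, near the cusp membership in $\mathcal{V}^2_{1/2}(\Omega_\tau)$ entails $u\in V^2_1(\Omega_\tau)$, whose $|\delta|=1$ weight in \eqref{www66} equals $1$, so that $\int_{\Omega_\tau}|\nabla u|^2\,dx<\infty$; together with the trace estimate this controls $\int_{\Upsilon_\tau^\pm}|u|^2\,dS$. Away from the cusp, $\dot H^2(\Omega\setminus B_{\tau/2})$ combined with \eqref{f1} supplies the finiteness of the energy at infinity. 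Thus $u\in V$, and Condition 1 yields $u=0$, contradicting the normalisation, since weak convergence in $\mathcal{H}^2_{1/2}$ together with convergence of the $\alpha_j^\ve$ gives strong $L_2$-convergence of $u_\ve$ on compacta. This proves that $b_\ve$ is bounded.

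With $b_\ve$ bounded, I would repeat the limiting-absorption passage: estimate \eqref{glavoc2} now has a right-hand side independent of $\ve$, so $c_j^\ve\to c_j$ and $\widetilde v_\ve\rightharpoonup\widetilde v$ weakly in $\mathcal{H}^2_{1/2}$, producing $v=\sum_{j=1}^2 c_j U_j^0+\widetilde v\in\mathbb{F}^2_{1/2,0}(\Omega)$ solving \eqref{w1}-\eqref{w3} and obeying \eqref{glavoc122}; this is the asserted limiting-absorption solution. Uniqueness is immediate from the same ingredients: any homogeneous solution in $\mathbb{F}^2_{1/2,0}(\Omega)$ has vanishing outgoing coefficients by the flux identity of Remark \ref{rem8}, hence lies in $\mathcal{H}^2_{1/2}\subset V$, and Condition 1 forces it to vanish. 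The main obstacle is precisely the clean inclusion $\mathcal{H}^2_{1/2}\subset V$: it is this that lets us invoke Condition 1 directly, whereas at the borderline $\nu=\kappa/8$ the corresponding limit only lands in the larger space $V'$ and the stronger Condition 1$'$ is required (cf. Corollary \ref{toercrit} and Remark \ref{vip2}). One must also check that $\gamma=1/2$ is an admissible, non-critical weight in Theorem \ref{nfred}, i.e. $1/2\neq 1/2\pm\tfrac12\sqrt{1/4-2\nu/\kappa}$, which holds precisely because $\nu\neq\kappa/8$.
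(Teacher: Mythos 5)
Your proposal is correct and follows essentially the same route as the paper, which itself only says ``employing similar arguments to the above'' and relies on Lemma \ref{exist}, Lemma \ref{got2}, the analogue of Lemma \ref{occom}, and the passage to the limit exactly as you describe. You have in fact made explicit the one point the paper leaves implicit (and alludes to when contrasting with the threshold case $\nu=\kappa/8$): that for $\nu<\kappa/8$ the admissible cusp singularity $|x_1|^{-1/2+\sqrt{1/4-2\nu/\kappa}}$ lies in $\mathcal{V}^2_{1/2}(\Omega_\tau)$ and hence the limit of the normalised sequence lands in the energy space $V$, so Condition 1 (rather than Condition 1$'$) applies.
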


\begin{rem} Now formulae \eqref{j2} follows from Theorems \ref{pred2} and \ref{vv} .
\end{rem}

Finally let us comment on the applicability of Condition 1. It has been proved in \cite{M1} that in the case of fully submerged body Condition 2 implies the uniqueness. Various examples  of bodies satisfying Condition 2 can  be found  in \cite{KMV}. In particular, Condition 2 is satisfied by ellipses whose major axis is parallel to  the  $x_2$ axis, see \cite{H}.

One can apply the method of \cite{M1} for  the case of critically submerged body. This method is based on multipliers techniques and integration by parts. So we  only need to verify that integration by parts in the neighbourhood of the origin is justified. For the case $\nu>\kappa/8$, this  can easily be seen, as  a solution of homogeneous problem \eqref{w1}-\eqref{w3}, which is in $V$, belongs to the space $\mathcal{H}^2_{\gamma}(\Omega)$ for any $\gamma$, see Theorem \ref{pred2}. This means that the  solution decays quickly (in fact exponentially) in the neighbourhood of the origin  and the integration by parts is  justified. In other words, for the case $\nu>\kappa/8$ Condition 2 implies Condition 1. The same remains true, if $\nu<\kappa/8$. Then solution of homogeneous problem \eqref{w1}-\eqref{w3}, which is in $V$, belongs to the space $\mathcal{H}^2_{\gamma}(\Omega)$ for some $\gamma<1/2$, see Theorem \ref{pred2} and analysis of possible singularities shows that we still can integrate by parts, see
see Theorem \ref{uniq} below for the details.

The critical case $\nu=\kappa/8$ ,where we need to check Condition 1$'$, is more subtle, but still one can prove that  Condition 2 implies Condition 1$'$, see Remark \ref{coluniq}.

\section{Scattering matrix and its properties}
Let us define the usual scattering matrix for  $\nu<\kappa/8$. In this case we need to employ only waves at the infinity. First we need to introduce "incoming waves":
\be \label{qwe2}u_1^+(x)= \chi(x_1)e^{i\nu x_1-\nu x_2},\ \ u_2^+(x)= \chi(-x_1)e^{-i\nu x_1-\nu x_2},
 \ee
 compare the above with \eqref{qwe}.
\begin{theorem} \label{pros} Suppose that $\nu<\kappa/8$ and  Condition 1 is satisfied. Then there exist two linearly independent solutions of homogeneous problem \eqref{w1}-\eqref{w3}, $\eta_j, j=1,2$ ,
such that
\be \eta_j=u_j^{+}+\sum_{n=1}^{2}s_{jn}u_n^{-}+ \tilde{\eta}_j, \label{rrrr}
\ee
where $\tilde{\eta}_j\in \mathcal{H}^2_{1/2}$.  Condition \ref{rrrr} determines $\eta_j$ uniquely.
The scattering matrix $s=(s_{jn})_{j,n=1}^2$ is unitary and $s_{jn}=s_{nj}$, $j,n=1,2$.
\end{theorem}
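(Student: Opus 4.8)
The plan is to construct the two scattering solutions $\eta_j$ by applying the existence result of Theorem \ref{toer2} to suitably chosen right-hand sides, and then to extract unitarity and symmetry from a Green's-type identity. First I would produce $\eta_j$ as follows. Each incoming wave $u_j^+$ satisfies the homogeneous equations \eqref{w1}--\eqref{w3} only asymptotically; applying the operators $(\Delta, \partial_n, \partial_n - \nu)$ to the cut-off function $u_j^+$ (which equals a genuine solution for large $|x_1|$) produces data $\{f_j, g_j\}$ supported in the compact annulus where $\chi'$ is nonzero. Since $u_j^+$ decays like $e^{-\nu x_2}$ and $\chi'$ has compact support, these data lie in $\mathcal{W}^0_{1/2}(\Omega) \times \mathcal{W}^{1/2}_{1/2}(\partial\Omega)$ and have compact support, so Theorem \ref{toer2} yields a unique $w_j \in \mathbb{F}^2_{1/2,0}(\Omega)$ solving the inhomogeneous problem with $\{-f_j, -g_j\}$ on the right. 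Setting $\eta_j := u_j^+ + w_j$ then solves the homogeneous problem, and expanding $w_j = \sum_n s_{jn} u_n^- + \tilde\eta_j$ with $\tilde\eta_j \in \mathcal{H}^2_{1/2}$ gives exactly \eqref{rrrr}. Linear independence is immediate from the incoming parts $u_j^+$ being independent.

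Next I would establish uniqueness of the representation \eqref{rrrr}, i.e. that the decomposition determines $\eta_j$. If two solutions had the same incoming and outgoing coefficients, their difference would be a homogeneous solution lying purely in $\mathcal{H}^2_{1/2} \subset V$ (no incoming or outgoing waves), hence in the energy space $V(\Omega)$; Condition 1 then forces it to vanish. This also shows the $s_{jn}$ are well-defined.

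For unitarity and symmetry I would use the bilinear form $J(u,v) := \int_{\Omega_R \setminus \Omega_{\delta}} (u\,\Delta \bar v - \bar v\, \Delta u)\,dx$ applied to pairs drawn from $\{\eta_1, \eta_2, \bar\eta_1, \bar\eta_2\}$, and integrate by parts. The boundary conditions \eqref{w2}--\eqref{w3} kill the contributions on $S$ and $\Gamma$, so only the far-field circle $|x| = R$ and the two small cusp arcs at $|x_1| = \delta$ survive. On the far-field circle, inserting the asymptotics \eqref{radcondinf} / \eqref{waves12} and using the orthogonality of $e^{\pm i\nu x_1}$ together with the $e^{-\nu x_2}$ decay, the limit $R \to \infty$ produces the standard quadratic expressions in the coefficients $c_j^\pm$; on the cusp arcs, since $\tilde\eta_j \in \mathcal{H}^2_{1/2}$ decays fast enough (there are no singular cusp waves present because $\nu < \kappa/8$), the limit $\delta \to 0$ gives zero. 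Comparing the resulting identities — using $J(\eta_i, \eta_j) = 0$ to get symmetry $s_{jn} = s_{nj}$, and $J(\eta_i, \bar\eta_j)$-type combinations to get $s s^* = I$ — yields both claimed properties.

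The main obstacle will be the careful bookkeeping of the boundary terms in the Green's identity, specifically verifying that the cusp contributions vanish. This requires knowing that for $\nu < \kappa/8$ the remainders $\tilde\eta_j$ lie in $\mathcal{V}^2_{1/2}(\Omega_\tau)$ and that no singular waves $\textbf{v}^\pm$ appear in the homogeneous solution (which is guaranteed here because the expansion in Theorem \ref{pred2} together with the condition $\nu < \kappa/8$ places all singular exponents outside the range that would allow nonzero cusp flux). One must check that the weight $|x_1|^{2(\gamma - 2 + |\delta|)}$ with $\gamma = 1/2$ controls the boundary integrand on $|x_1| = \delta$ so that it tends to zero as $\delta \to 0$; this is a routine but delicate estimate, and it is precisely the point where the case $\nu < \kappa/8$ is easier than $\nu > \kappa/8$, since there the genuine cusp waves would contribute and a separate cusp scattering analysis would be needed.
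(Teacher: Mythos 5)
Your construction follows the same overall route as the paper: correct the incoming wave by a solution of the inhomogeneous problem supplied by Theorem \ref{toer2}, then extract unitarity from a Green's identity over $\Omega\cap\{|x_1|<M\}$. However, there is a concrete slip in the first step. The data obtained by applying $(\Delta,\partial_n,\partial_n-\nu)$ to the cut-off wave $\chi(x_1)e^{i\nu x_1-\nu x_2}$ is supported where $\chi'\neq 0$, which inside $\Omega$ is an \emph{unbounded vertical strip} $\{N<x_1<2N,\ x_2>0\}$, not a compact annulus; so the hypothesis of Theorem \ref{toer2} that $\{f,g\}$ have compact support is not met as you claim. The paper sidesteps this by writing $\eta_1=e^{i\nu x_1-\nu x_2}+\xi_1$ with the \emph{full} plane wave, which satisfies \eqref{w1} and \eqref{w3} exactly everywhere, so the only data is $g_1=-\partial_n e^{i\nu x_1-\nu x_2}|_S$ on the compact body boundary $S$; decomposing the plane wave as $u_1^++u_2^-+(\text{compactly supported piece})$ then yields \eqref{rrrr} with $s_{12}=c_2+1$. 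Your version can be repaired by noting that the commutator data decays like $e^{-\nu x_2}$ and so lies in the exponentially weighted space $W^0_{\beta,1}$ actually used in the far-field part of the argument, but as written the theorem you invoke does not apply.

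A second, smaller gap: your uniqueness argument only treats two candidates with the \emph{same} outgoing coefficients. To show that \eqref{rrrr} determines $\eta_j$ (and hence that the $s_{jn}$ are well defined) you must rule out a nonzero homogeneous solution of the form $\sum_n a_n u_n^-+\tilde w$ with $\tilde w\in\mathcal{H}^2_{1/2}$ and the $a_n$ not all zero; this is precisely the uniqueness assertion of Theorem \ref{toer2}, or follows from the same flux identity you use for unitarity (which forces $\sum_n|a_n|^2=0$), after which Condition 1 finishes the job. The rest of your plan---the Green's identity, the vanishing cusp contribution because $\tilde\eta_j\in\mathcal{V}^2_{1/2}(\Omega_\tau)$ and no singular cusp waves occur for $\nu<\kappa/8$---matches the paper; the only stylistic difference is that the paper obtains $s_{jn}=s_{nj}$ more cheaply from realness of the coefficients, unitarity, and $u_j^-=\overline{u_j^+}$, whereas you propose the bilinear Green identity, which also works.
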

\begin{proof} The arguments are standard, see e.g. \cite{NP}. Consider for example case $j=1$. We are looking for $\eta _1$ in the form,
\be\eta_1(x)=e^{i\nu x_1-\nu x_2}+\xi_1(x),
\ee
where $\xi_1(x)$ is a solution of the problem \eqref{w1}-\eqref{w3} with $f=0, g_2=0$ and $g_1=-\partial_n e^{i\nu x_1-\nu x_2}|_S$.
The solution to this problem
exists in the space
$ \mathbb{H}^2_{1/2,0} (\Omega)$, due to Theorem \ref{toer2}. In   particular
 \be  \xi_1= \sum_{j=1}^2 c_j U_j^{0}+\widetilde{v},
 \ee
where $\widetilde{v}\in \mathcal{H}^2_{1/2}(\Omega)$. Since $U_j^{0}=u_j^-$ (compare \eqref{u1},\eqref{u2} with \eqref{qwe}), we see that
$\eta_1(x)=e^{i\nu x_1-\nu x_2}+\xi_1(x)$ satisfies \eqref{rrrr} with $s_{11}=c_1$ and $s_{12}=c_2+1$. Clearly this solution is unique.
The same argument applies to $\eta_2$.

Next we verify the properties of scattering matrix $s$. We know that $\eta_j $ solves the problem \be
    \Delta \eta_j=0 ,\ \text{in}\  \Omega, \label{ww10}
\ee \be
    \partial_n \eta_j=0, \ \text{on}\   S, \label{ww20}
\ee \be
    \partial_n \eta_j=\nu \eta_j,\ \text{on}\    \Gamma. \label{ww30}
\ee

Let us  multiply \eqref{ww10} by $\overline{\eta_n}$, $n=1,2$, integrate over $\Omega \cap \{|x_1|<M\} $ and integrate by parts twice
(which is justified since $\eta_j \in \mathcal{H}^2_{1/2}(\Omega_\tau)$ and due to conditions \eqref{radcondinf}). We have:
\be   0=\int_0^{+\infty}\overline{\eta_n}\partial _{x_1}\eta_j |_{x_1=M}dx_2-\int_0^{+\infty}\overline{\eta_n}\partial _{x_1}\eta_j |_{x_1=-M}dx_2
\ee
$$-\int_0^{+\infty}\eta_j\overline{\partial _{x_1}\eta_j} |_{x_1=M}dx_2+\int_0^{+\infty}\eta_j\overline{\partial _{x_1}\eta_j} |_{x_1=-M}dx_2
.$$
Next using \eqref{rrrr} and \eqref{qwe2}, we
pass to the limit as $M\rightarrow +\infty$, and  obtain
\be 0=\delta_{jn} - \sum_{p=1}^2\overline{s_{np}}s_{jp}.
\ee
So $s$ is indeed unitary.

Now the symmetry property $s_{jn}=s_{nj}$, $j,n=1,2$ follows easily, since \eqref{ww10}-\eqref{ww30} is a problem with real coefficients, $s$ is unitary and
\be  u_j^-=\overline{u_j^+},\ \ j=1,2.
\ee
\end{proof}

In the case $\nu>\kappa/8$, there are { \it four}  linearly independent solutions to homogeneous problem \eqref{w1}-\eqref{w3}, viewed as  solutions of a scattering problem.
First we renormalise functions  $U_j^0$ , $j=3,4$ (see \eqref{sss} and \eqref{k5}):
\be      u_3^-(x):=(\omega\kappa)^{-\frac{1}{2}}U_3^0(x)=(\omega\kappa)^{-\frac{1}{2}}\textbf{v}^{-}_0(x)\zeta_\tau^+
=\label{u3-}\ee
$$(\omega\kappa)^{-\frac{1}{2}}|x_1|^{-\frac{1}{2}+\, i\omega}\left(1+x_1^{2}Q_0^-\left(x_2/\phi(x_1)\right)\right)\zeta_\tau^+,
$$
and
\be      u_4^-(x):=(\omega\kappa)^{-\frac{1}{2}}U_4^0(x)=(\omega\kappa)^{-\frac{1}{2}}\textbf{v}^{-}_0(x)\zeta_\tau^-
=
\label{u4-}\ee
$$(\omega\kappa)^{-\frac{1}{2}}|x_1|^{-\frac{1}{2}+ \, i\omega}\left(1+x_1^{2}Q_0^-\left(x_2/\phi(x_1)\right)\right)\zeta_\tau^-,
$$
\be \omega:=\sqrt{\frac{2\nu}{\kappa}-\frac{1}{4}}. \label{ome}\ee
Similarly to \eqref{qwe} we will refer to these functions as\textit{ outgoing waves in the cusps}. Namely $u_3^-$ is the outgoing wave in the right cusp $\Omega_\tau^+$ and $u_4^-$ is the outgoing wave in the left cusp $\Omega_\tau^-$. In a similar way we introduce \textit{incoming waves in the cusps}:
\be      u_3^+(x):=(\omega\kappa)^{-\frac{1}{2}}\textbf{v}^{+}_0(x)\zeta_\tau^+=(\omega\kappa)^{-\frac{1}{2}}x_1^{-\frac{1}{2}-\, i\omega}\left(1+x_1^{2}Q_0^+\left(x_2/\phi(x_1)\right)\right)\zeta_\tau^+(x),
\label{u3+}\ee
\be      u_4^+(x):=(\omega\kappa)^{-\frac{1}{2}}\textbf{v}^{+}_0(x)\zeta_\tau^-=(\omega\kappa)^{-\frac{1}{2}}|x_1|^{-\frac{1}{2}- \, i\omega}\left(1+x_1^{2}Q_0^+\left(x_2/\phi(x_1)\right)\right)\zeta_\tau^-(x),
\label{u4+}\ee
see \eqref{sss}.
\begin{theorem} \label{SSS} Suppose that $\nu>\kappa/8$ and  Condition 1 is satisfied. Then there exist four linearly independent solutions of homogeneous problem \eqref{w1}-\eqref{w3}, $\eta_j, j=1,..,4$,
such that
\be \eta_j=u_j^{+}+\sum_{n=1}^{4}S_{jn}u_n^{-}+ \tilde{\eta}_j, \label{rrrr2}
\ee
where $\tilde{\eta}_j\in \mathcal{H}^2_{\gamma}$, for all $\gamma>0$.  Condition \eqref{rrrr2} determines $\eta_j$ uniquely.
The scattering matrix $S=(S_{jn})_{j,n=1}^4$ is unitary and $S_{jn}=S_{jn},\ \ j,n=1,..,4$.
\end{theorem}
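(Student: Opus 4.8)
The plan is to repeat the scheme of Theorem~\ref{pros}, the genuinely new ingredient being the two extra ``channels'' carried by the cusp waves $u_3^\pm,u_4^\pm$ and the flux balance at the cusp. \emph{Existence and the form \eqref{rrrr2}:} for $j=1,2$ I would seek $\eta_j=u_j^++\xi_j$ exactly as in Theorem~\ref{pros}, the forcing being the commutator $[\Delta,\chi]$, which is smooth and compactly supported away from $O$. For $j=3,4$ I would instead set $\eta_j=(\omega\kappa)^{-1/2}\zeta_\tau^{\pm}Y_0^{+}+\xi_j$, using the \emph{exact} homogeneous cusp solution $Y_0^+$ of Theorem~\ref{vv} (which at $\ve=0$, $\lambda_0=\omega^2>0$, has incoming leading term $|x_1|^{-1/2-i\omega}$), so that the forcing is only the cut-off commutator $[\Delta,\zeta_\tau^\pm]Y_0^+$, supported in $\{\tau/2<|x_1|<\tau\}$ where $Y_0^+\in H^2$; this data is compactly supported, smooth and away from $O$, hence in $\mathcal W^0_0\times\mathcal W^{1/2}_0$. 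In all four cases Theorem~\ref{toer} then furnishes a unique $\xi_j\in\mathbb H^2_{\gamma,0}(\Omega)$ for every $\gamma>0$, with detached asymptotics $\xi_j=\sum_{n=1}^4c_nU_n^0+\tilde\xi_j$. Since $U_1^0=u_1^-$, $U_2^0=u_2^-$, $U_{3,4}^0=(\omega\kappa)^{1/2}u_{3,4}^-$ (compare \eqref{u3-}, \eqref{u4-} with \eqref{k5}), and since the remainder $(\omega\kappa)^{-1/2}\zeta_\tau^\pm\tilde{\textbf{v}}_0^+$ produced by $Y_0^+=\textbf{v}_0^++\tilde{\textbf{v}}_0^+$ lies in $\mathcal H^2_\gamma$, collecting terms gives \eqref{rrrr2} with $\tilde\eta_j\in\mathcal H^2_\gamma$ for all $\gamma>0$ and with $S_{jn}$ read off from the $c_n$.

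\emph{Uniqueness and independence:} if two functions satisfy \eqref{rrrr2} with the same $j$, their difference has no incoming part, hence lies in $\mathbb H^2_{\gamma,0}(\Omega)$ and solves the homogeneous problem; the uniqueness in Theorem~\ref{toer} forces it to vanish. The four incoming amplitudes $u_j^+$ are mutually distinguishable (two oscillate at $\pm\infty$, the other two carry the cusp exponent $|x_1|^{-1/2-i\omega}$, distinct from the outgoing $|x_1|^{-1/2+i\omega}$), so $\sum_j a_j\eta_j=0$ forces all $a_j=0$; this gives linear independence.

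\emph{Unitarity and symmetry:} multiply $\Delta\eta_j=0$ by $\overline{\eta_n}$ and apply Green's second identity on $\Omega\cap\{\delta<|x_1|<M\}$. The integrals over $S$ and $\Gamma$ vanish ($\partial_n\eta=0$ on $S$; $\partial_n\eta=\nu\eta$ with $\nu$ real on $\Gamma$), leaving only the boundary flux $J[\,\cdot\,,\cdot\,]$, with integrand $\overline{\eta_n}\,\partial_{x_1}\eta_j-\eta_j\,\overline{\partial_{x_1}\eta_n}$, through the cross-sections $x_1=\pm M$ and $x_1=\pm\delta$. As $M\to\infty$ the sections $x_1=\pm M$ reproduce, as in Theorem~\ref{pros}, the $u_{1,2}^\pm$ fluxes (the remainder drops since $\tilde\eta_j\in\dot{H}^2(\Omega\setminus B_{\tau/2})$). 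As $\delta\to0$ the sections $x_1=\pm\delta$ give the cusp fluxes: for the model profiles a direct computation yields $J[u_3^+,u_3^-]=0$ while the self-fluxes $J[u_{3,4}^\pm,u_{3,4}^\pm]$ are finite and equal to $\mp i$, because the factor $|x_1|^{-2}$ is compensated by $\int_0^{\phi(\delta)}dx_2\sim\tfrac{\kappa}{2}\delta^2$ together with the normalisation $(\omega\kappa)^{-1/2}$ of \eqref{u3-}--\eqref{u4+}; the cross terms with $\tilde\eta_j$ and with the subleading parts of the profiles vanish since $\tilde\eta_j\in\mathcal V^2_\gamma(\Omega_\tau^{\pm})$ for all $\gamma>0$. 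Collecting the four channels yields $\delta_{jn}=\sum_{p=1}^4\overline{S_{np}}S_{jp}$, i.e. $SS^*=I$. For symmetry, reality of the coefficients makes $\overline{\eta_n}$ a homogeneous solution, and $u_j^-=\overline{u_j^+}$ for all $j$ (for $j=3,4$ this follows from $\overline{\Lambda_0^+}=\Lambda_0^-$, whence $\overline{Q_0^+}=Q_0^-$ in \eqref{QQ}); thus $\overline{\eta_n}-\sum_k\overline{S_{nk}}\eta_k$ has no incoming part and vanishes by the uniqueness above, and matching outgoing amplitudes gives $\bar S S=I$, which together with $SS^*=I$ forces $\bar S=\bar S^{\,T}$, i.e. $S_{jn}=S_{nj}$.

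The main obstacle is precisely the cusp flux balance: one must verify that the model self-fluxes $J[u^\pm_{3,4},u^\pm_{3,4}]$ converge to the finite limits $\mp i$ as $\delta\to0$ --- this is exactly where the exponent $-1/2\pm i\omega$, the quadratic cusp $\phi\sim\kappa x_1^2/2$ and the normalisation \eqref{ome} conspire --- and that all cross terms involving the remainder $\tilde\eta_j$ or the lower-order parts of $u_{3,4}^\pm$ are negligible in the limit, using the strong decay $\tilde\eta_j\in\mathcal V^2_\gamma(\Omega_\tau^\pm)$, $\forall\gamma>0$. Everything else is a routine transcription of the arguments in Theorem~\ref{pros}.
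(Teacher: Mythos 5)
Your proposal follows essentially the same route as the paper: the ansatz $\eta_{3,4}=(\omega\kappa)^{-1/2}\zeta^{\pm}Y_0^{+}+\xi_{3,4}$ with the exact homogeneous cusp solution $Y_0^+$ so that the forcing is a cut-off commutator handled by Theorem \ref{toer}, uniqueness via the absence of incoming parts, and unitarity via Green's identity on $\Omega\setminus\Omega_\delta\cap\{|x_1|<M\}$ with the cusp flux computed from the normalisation \eqref{u3-}--\eqref{u4+}. The details you flag as the main obstacle (finiteness of the cusp self-fluxes and negligibility of the remainder cross terms) are exactly the ones the paper relies on, so the argument is correct and matches.
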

\begin{proof} The proof of the existence of $\eta_1,\eta_2$ follows the arguments   of Theorem \ref{pros}, with reference to Theorem \ref{toer} instead of Theorem \ref{toer2}. As for $\eta_3$ and $\eta_4$, we need to take some more care. Consider for example $\eta_3$.
Let us look for $\eta_3$ in the form
\be\eta_3=(\omega\kappa)^{-\frac{1}{2}} Y_0^+ \zeta_\delta^+ +\xi_3,
\ee
where $\textit{\textbf{v}}^+$ is the function described in Theorem \ref{vv}, $\delta$ is sufficiently small and
$\xi_3(x)$ is a solution of the problem \eqref{w1}-\eqref{w3} with $f=(\omega\kappa)^{-\frac{1}{2}}\Delta \left(Y_0^+\zeta_\delta^+\right)$ and
$$g_1=-(\omega\kappa)^{-\frac{1}{2}}\partial_n\left(Y_0^+\zeta_\delta^+\right)|_S\, , \,\,\,
g_2=-(\omega\kappa)^{-\frac{1}{2}}(\partial_n-\nu)\left(Y_0^+\zeta_\delta^+\right)|_\Gamma\,.$$ Since $Y_0^+$ is a solution of homogeneous problem \eqref{b1}-\eqref{b3} (with $\ve=0$) for small enough $\delta$, we conclude that $(f,g)\in \mathcal{W}_{\gamma}^{0}(\Omega)\times
\mathcal{W}_{\gamma}^{1/2}(\partial \Omega)$  and Theorem \ref{toer} applies. As a result there is a solution of the problem for $\xi_3$ in the space
$ \mathbb{H}^2_{\gamma,0} (\Omega)$ for any $\gamma>0$. In   particular
 \be  \xi_3= \sum_{j=1}^4 c_j U_j^{0}+\widetilde{v},
 \ee
where $\widetilde{v}\in \mathcal{H}^2_{\gamma}(\Omega)$ for any $\gamma>0$. Since $u_j^-=U_j^{0}$, $j=1,2$ and $u_p^-=(\omega\kappa)^{-\frac{1}{2}}U_p^{0}$, $p=3,4$ we see that
$\eta_3(x)=(\omega\kappa)^{-\frac{1}{2}}Y_0^+\zeta_\delta^++\xi_3$ satisfies \eqref{rrrr2} with $s_{3j}=c_j$, $j=1,2$ and $s_{3p}=(\omega\kappa)^{-\frac{1}{2}}c_p$, $p=1,2$. Clearly this solution is unique.
The same argument applies to $\eta_4$.

Next we verify the properties of scattering matrix $S$. We know that $\eta_j $ solves the problem \be
    \Delta \eta_j=0 ,\ \text{in}\  \Omega, \label{ww1}
\ee \be
    \partial_n \eta_j=0, \ \text{on}\   S, \label{ww2}
\ee \be
    \partial_n \eta_j=\nu \eta_j,\ \text{on}\    \Gamma. \label{ww3}
\ee

Let us  multiply \eqref{ww1} by $\overline{\eta_n}$, $n=1,..,4$, integrate over $\Omega \setminus \Omega_\delta \cap \{|x_1|<M\} $ and integrate by parts twice
(which is justified  due to conditions \eqref{radcondinf}). As a result we have:
\be   0=\int_0^{+\infty}\overline{\eta_n}\partial _{x_1}\eta_j |_{x_1=M}dx_2-\int_0^{+\infty}\overline{\eta_n}\partial _{x_1}\eta_j |_{x_1=-M}dx_2
\ee
$$-\int_0^{+\infty}\eta_j\overline{\partial _{x_1}\eta_j} |_{x_1=M}dx_2+\int_0^{+\infty}\eta_j\overline{\partial _{x_1}\eta_j} |_{x_1=-M}dx_2
$$
$$-\int_0^{\phi(\delta)}\overline{\eta_n}\partial _{x_1}\eta_j |_{x_1=\delta}dx_2+\int_0^{\phi(-\delta)}\overline{\eta_n}\partial _{x_1}\eta_j |_{x_1=-\delta}dx_2
$$
$$+\int_0^{\phi(\delta)}\eta_j\overline{\partial _{x_1}\eta_j} |_{x_1=\delta}dx_2-\int_0^{\phi(-\delta)}\eta_j\overline{\partial _{x_1}\eta_j} |_{x_1=-\delta}dx_2.
$$
Passing to the limits as $M\rightarrow +\infty$ and  $\delta\rightarrow 0$, and using  \eqref{rrrr2},  \eqref{u3-}, \eqref{u4-}, \eqref{u3+}, \eqref{u4+}, \eqref{qwe} and \eqref{qwe2} we obtain
\be 0=i\delta_{jn} - i\sum_{p=1}^4\overline{S_{np}}S_{jp},
\ee
so $S$ is unitary. The property $S_{jn}=S_{jn},\ \ j,n=1,..,4$ can be verified in the same way as in Theorem \ref{pros}.
\end{proof}

\begin{rem} \label{rem8} This type of argument with integration by parts implies the fact (which we used in Lemma \ref{occom}) that if $v\in \mathbb{H}^2_{\gamma,0}$ for all $\gamma>0$ and is a solution of homogeneous problem \eqref{w1}-\eqref{w3}, then $v\in \mathcal{H}^2_{\gamma}$ for all $\gamma>0$.
\end{rem}

Next we describe an important non-standard property of the scattering matrix $S$. First we decompose $S$ as follows:
\be
S=\left(
    \begin{array}{cc}
      S_{(1,1)} & S_{(1,2)} \\
      S_{(2,1)} & S_{(2,2)} \\
    \end{array}
  \right),
\ee
where
\be
S_{(1,1)}=\left(
    \begin{array}{cc}
      S_{1,1} & S_{1,2} \\
      S_{2,1} & S_{2,2} \\
    \end{array}
  \right),
  \ \ \
 S_{(1,2)}=\left(
    \begin{array}{cc}
      S_{1,3} & S_{1,4} \\
      S_{2,3} & S_{2,4} \\
    \end{array}
  \right),
\ee
and
\be
S_{(2,1)}=\left(
    \begin{array}{cc}
      S_{3,1} & S_{3,2} \\
      S_{4,1} & S_{4,2} \\
    \end{array}
  \right),
  \ \ \
 S_{(2,2)}=\left(
    \begin{array}{cc}
      S_{3,3} & S_{3,4} \\
      S_{4,3} & S_{4,4} \\
    \end{array}
  \right).
\ee
\begin{theorem} \label{s22} Suppose  Condition 2 is satisfied and  $\nu>\kappa/8$.  Then $\det S_{(1,2)}\neq 0$,  $\det S_{(2,1)}\neq 0$ and absolute values of eigenvalues  of the matrices
 $S_{(2,2)}$ and  $S_{(1,1)}$ are strictly less then 1.
\end{theorem}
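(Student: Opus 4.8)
The plan is to reduce the whole statement to the single assertion $\det S_{(1,2)}\neq 0$ and to extract the latter from the uniqueness result announced in the Introduction. First, since $S=S^{T}$ (the symmetry property) we have $S_{(2,1)}=S_{(1,2)}^{T}$, so $\det S_{(2,1)}=\det S_{(1,2)}$ and the two determinant claims coincide. Writing the blocks as $A=S_{(1,1)}$, $B=S_{(1,2)}$, $C=S_{(2,1)}$, $D=S_{(2,2)}$, the identity $S^{*}S=I$ yields the block relations $A^{*}A+C^{*}C=I$ and $B^{*}B+D^{*}D=I$. If $\det C\neq 0$ then $C^{*}C$ is positive definite, so for every unit vector $v$ we get $\|Av\|^{2}=1-\|Cv\|^{2}\leq 1-\sigma_{\min}(C)^{2}<1$; hence $\|S_{(1,1)}\|<1$ in operator norm, and since the spectral radius is bounded by the operator norm, every eigenvalue of $S_{(1,1)}$ lies strictly inside the unit disk. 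The same computation applied to $B^{*}B+D^{*}D=I$, using $\det B\neq 0$, bounds the eigenvalues of $S_{(2,2)}$. Thus all four conclusions follow once $\det S_{(1,2)}\neq 0$ is in hand.

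To prove $\det S_{(1,2)}\neq 0$ I argue by contradiction. Suppose $S_{(1,2)}b=0$ for some $b=(b_{3},b_{4})\neq 0$, and set $\eta:=b_{3}\eta_{3}+b_{4}\eta_{4}$, where $\eta_{3},\eta_{4}$ are the scattering solutions furnished by Theorem~\ref{SSS}; by their linear independence $\eta\neq 0$. Collecting coefficients in \eqref{rrrr2}, the incoming part of $\eta$ consists only of the cusp waves $b_{3}u_{3}^{+}+b_{4}u_{4}^{+}$, and the outgoing coefficients at infinity are $\sum_{j=3,4}b_{j}S_{jn}=(S_{(2,1)}^{T}b)_{n}=(S_{(1,2)}b)_{n}=0$ for $n=1,2$. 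Therefore $\eta$ carries no wave at infinity (its remainder there lies in $\mathcal{H}^{2}_{\gamma}$), so $\eta$ decays at infinity, while at the cusps it is an arbitrary superposition of the incoming waves $u_{3}^{+},u_{4}^{+}$ and the outgoing waves $u_{3}^{-},u_{4}^{-}$ (with coefficient vector $S_{(2,2)}b$) modulo $\mathcal{H}^{2}_{\gamma}$.

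Consequently $\eta$ is a nontrivial homogeneous solution of \eqref{w1}-\eqref{w3} lying precisely in the class for which the uniqueness theorem (the crucial ingredient, Theorem~4.4) applies: decaying at infinity while being an arbitrary combination of incoming and outgoing cusp waves. Under Condition~2 that theorem forces $\eta=0$, contradicting $\eta\neq 0$; hence $\det S_{(1,2)}\neq 0$, and the proof is complete. The genuine obstacle is entirely absorbed into that uniqueness statement: one must show that Maz'ya's multiplier identity, built from the vector field appearing in \eqref{Mazusl}, may still be integrated by parts against the cusp-singular profiles $|x_{1}|^{-1/2\pm i\omega}$ of \eqref{sss}, with the flux and boundary terms on the cross-sections $x_{1}=\pm\delta$ controlled as $\delta\to 0$ by the explicit asymptotics of Theorem~\ref{vv}. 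Granting this, the present theorem is pure linear algebra resting on the unitarity and symmetry of $S$.
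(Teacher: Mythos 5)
Your proposal is correct and follows essentially the same route as the paper: the paper likewise reduces everything to the non-vanishing of one off-diagonal block determinant (it works with a left null row $a$ of $S_{(2,1)}$ rather than a right null vector of $S_{(1,2)}$, which is the same thing by symmetry), forms the combination $a_1\eta_3+a_2\eta_4$ with no waves at infinity, and invokes the uniqueness Theorem~\ref{uniq} to get a contradiction. The only difference is that you spell out the unitarity block identities $A^{*}A+C^{*}C=I$ and $B^{*}B+D^{*}D=I$ that the paper leaves implicit behind the phrase ``it is enough to prove only one of the claims.''
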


This theorem will follow from the following uniqueness result.
\begin{theorem} \label{uniq}Suppose  Condition 2 is satisfied and  $\nu>\kappa/8$. If  $v$ is a solution of the homogeneous problem \eqref{w1}-\eqref{w3} and
$v\in \mathcal{H}^2_\gamma(\Omega)$ for some $\gamma$, then $v\equiv0$.
\end{theorem}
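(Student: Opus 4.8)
The plan is to prove Theorem \ref{uniq} by the method of multipliers of Maz'ya (\cite{M1}, \cite{KMV}), the whole point being to show that the integration by parts underlying that method survives the passage to the cusp. First I would fix the solution $v$ and record its behaviour near the origin. By Theorems \ref{pred2} and \ref{vv} applied with $\ve=0$, in each cusp neighbourhood $\Omega_\delta^\pm$ one has $v=c^\pm\textbf{v}^+_0+b^\pm\textbf{v}^-_0+\widetilde v$, where $\textbf{v}^\pm_0$ are the explicit cusp waves of Remark \ref{vip}, with leading singularity $|x_1|^{-1/2\mp i\omega}$ and correction $|x_1|^{3/2\mp i\omega}Q_0^\pm(x_2/\phi(x_1))$, see \eqref{sss}, while $\widetilde v$ belongs to $\mathcal V^2_{\gamma_1}$ with $\gamma_1$ as large as we please. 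In particular the worst singularity that can occur is $|v|\lesssim|x_1|^{-1/2}$, $|\nabla v|\lesssim|x_1|^{-3/2}$ near each cusp, and at infinity $v$ has the radiation structure \eqref{radcondinf} with a $\dot H^2$ remainder.

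Next I would write the multiplier identity $2\,\re\int\overline{\mathcal M v}\,\Delta v\,dx=0$ on the truncated domain $(\Omega\setminus\Omega_\delta)\cap\{|x_1|<M\}$, where $\mathcal M$ is the Maz'ya multiplier of \cite{M1}, \cite{KMV}, tailored so that its $S$-boundary contribution, after using $\partial_n v=0$, is exactly the quadratic form $\bigl(x_1(x_1^2-x_2^2)n_1+2x_1^2x_2n_2\bigr)|\nabla v|^2$ of \eqref{Mazusl}. Integrating by parts twice and invoking $\Delta v=0$ together with the free surface condition on $\Gamma$, the volume integrals and the $\Gamma$-integral combine exactly as in the fully submerged case, leaving an identity of the schematic form
\[
0=\int_S\bigl(x_1(x_1^2-x_2^2)n_1+2x_1^2x_2n_2\bigr)\,|\nabla v|^2\,dS+I_\infty(M)+I_{\mathrm{cusp}}(\delta),
\]
where $I_\infty(M)$ is the flux through the cross sections $|x_1|=M$ and $I_{\mathrm{cusp}}(\delta)$ the flux through the two arcs $\{|x_1|=\delta\}$ inside the cusps. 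Under Condition 2 the $S$-integral is nonnegative, so everything reduces to showing $I_\infty(M)\to0$ and $I_{\mathrm{cusp}}(\delta)\to0$. The first is standard: it follows from the radiation representation \eqref{radcondinf} and the $\dot H^2$-decay of the remainder, exactly as for a fully submerged body.

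The genuinely new step, and the one I expect to be the main obstacle, is the vanishing of $I_{\mathrm{cusp}}(\delta)$. A priori this is unclear, since $v$ is singular at the cusp and the factor $|\nabla v|^2$ in the flux is of size $|x_1|^{-3}$ on $\{|x_1|=\delta\}$. What rescues the argument is a threefold balance: the multiplier degenerates at the cusp tip (its $x_1$-component is $O(x_1^3)$, owing to the extra factor $x_1$ present in \eqref{Mazusl}), and the cross section has width $\phi(\delta)\sim\tfrac\kappa2\delta^2$. Inserting the explicit asymptotics of Theorem \ref{vv} and Remark \ref{vip} one finds that $I_{\mathrm{cusp}}(\delta)$ splits into a diagonal part of size $\delta^3\cdot\delta^{-3}=O(1)$ pointwise, hence $O(\delta^2)$ after integration across the cusp, and oscillatory cross terms of the same order carrying factors $\delta^{\pm2i\omega}$; both tend to $0$, and the remainder $\widetilde v\in\mathcal V^2_{\gamma_1}$ contributes even less for $\gamma_1$ large. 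Thus $I_{\mathrm{cusp}}(\delta)\to0$ at both cusps, which is precisely the statement that \emph{the method of multipliers survives the cusp}. Carrying this out rigorously, and in particular justifying the double integration by parts up to the singular point using the weighted $H^2$ control afforded by $v\in\mathcal H^2_\gamma(\Omega)$, is the technical heart of the proof.

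Passing to the limits $M\to\infty$ and $\delta\to0$ then yields $\int_S\bigl(x_1(x_1^2-x_2^2)n_1+2x_1^2x_2n_2\bigr)|\nabla v|^2\,dS=0$. Since the integrand is nonnegative by Condition 2, $|\nabla v|$ vanishes on the subset of $S$ where \eqref{Mazusl} is strict; as this contains an arc of $S$, and $v$ is harmonic there with $\partial_n v=0$, unique continuation for the Laplacian from the vanishing Cauchy data forces $v$ to be constant near the arc and hence, by connectedness of $\Omega$, throughout. The free surface condition $\partial_n v-\nu v=0$ on $\Gamma$ with $\nu>0$ then turns this constant into $0$, so $v\equiv0$. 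The degenerate case in which \eqref{Mazusl} vanishes identically on $S$ is excluded as in \cite{M1}, \cite{KMV}.
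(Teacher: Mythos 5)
Your overall strategy (Maz'ya multipliers with $Z$, $H$ as in \eqref{zh}, truncation at $|x_1|=\delta$ and $|x_1|=M$, Condition 2 giving the sign of the $S$-integral) is the paper's strategy, but the step you yourself identify as the heart of the matter is computed incorrectly, and the error is decisive. You claim the cusp flux $I_{\mathrm{cusp}}(\delta)\to 0$, arguing that the multiplier degenerates like $O(x_1^3)$ at the tip. It does not: the $x_1$-component of the paper's multiplier is $Z_1=x_1(x_1^2-x_2^2)/(x_1^2+x_2^2)$, which in the cusp (where $x_2=O(x_1^2)$) behaves like $x_1$, not $x_1^3$ --- you have dropped the denominator $(x_1^2+x_2^2)$, i.e.\ the normalisation that makes the $S$-integrand equal to $|\nabla u|^2(Z\cdot n)$ with $(x_1^2+x_2^2)(Z\cdot n)$ the expression in \eqref{Mazusl}. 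With $Z_1\sim\delta$, $|\nabla u|^2\sim\delta^{-3}$ and cross-section width $\phi(\delta)\sim\kappa\delta^2/2$, the flux is $O(1)$, not $O(\delta^2)$. The paper's computation (using $u\sim a^\pm|x_1|^{-1/2}\cos(\omega\ln|x_1|+b^\pm)$ from Theorem \ref{vv}) shows that after adding the boundary terms $B_\delta^\pm$ coming from the free-surface integral at $x_1=\pm\delta$ --- terms absent from your account, and needed to cancel the oscillatory contributions $\cos^2(\omega\ln\delta+b^\pm)$ --- the cusp flux converges to the \emph{strictly negative} limit $-\tfrac12\kappa\omega^2\bigl((a^+)^2+(a^-)^2\bigr)$ whenever $a^\pm\neq 0$. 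The point of the theorem is precisely that this limit has a favourable sign: in the final identity \eqref{kkk} all three terms are non-positive, so each vanishes, forcing $a^\pm=0$ (no singular cusp waves) as well as $(Q\nabla u)\cdot\nabla u\equiv 0$.

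A secondary inaccuracy: in your schematic identity the volume term $\int_\Omega(Q\nabla u)\cdot\nabla u\,dx$ has disappeared ("the volume integrals combine exactly as in the fully submerged case"), yet it is this non-positive volume term, via \eqref{mazq}, that drives the final conclusion $u\equiv0$ in the paper (constancy of $u$ along the level curves of the degenerate form, plus decay), rather than your proposed route through strict positivity of \eqref{Mazusl} on an arc of $S$ and unique continuation --- Condition 2 only guarantees $\geq 0$ on $S$, so strict positivity on an arc is not available in general. You should redo the flux computation keeping the exact $Z$ of \eqref{zh}, include the $B_\delta^\pm$ terms, and replace "the flux vanishes" by "the flux has a non-positive limit which forces $a^\pm=0$".
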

\begin{proof}
Due to Theorems \ref{pred2} and \ref{vv} we have
\be  \label{qqqq}  v=\sum_{j=3}^4 c_j^+u_j^++\sum_{j=3}^4 c_j^-u_j^-+\widetilde{v},
\ee
where $c_j^\pm$, $j=3,4$ are some constants and $\widetilde{v}\in  \mathcal{H}^2_\gamma(\Omega)$ for any $\gamma>0$. Let us consider the real part of $v$, which we   denote $u$. It is  a solution of the homogeneous problem \eqref{w1}-\eqref{w3} and has same structure as \eqref{qqqq}.

We start by recalling the method of multipliers of \cite{M1}, \cite{KMV}, where it was applied for the case of fully submerged bodies.
 Let $Z=(Z_1,Z_2)$ be a real vector field in $\Omega$ with
at most linear growth as $|x|\rightarrow \infty$ and $Z_2(x_1,0)=0$ for
all $x_1$, and $H$ be a constant. The following identity, which can be
found in \cite{KMV}, p.71, can be verified directly:
    $$
        2\{(Z\cdot\nabla u +H u)\Delta u\}=
        2\nabla\cdot\{(Z\cdot\nabla u+H u)\nabla u\}
    $$
    \be
        +(Q\nabla u)\cdot\nabla u-
        \nabla\cdot\left(|\nabla u|^2Z\right)\,
        \label{ident}
    \ee
Here $Q$ is a $2\times 2 $ matrix with components $Q_{ij}=(\nabla\cdot Z-2H)\delta_{ij}-(\partial_i
Z_j+\partial_jZ_i),\ i,j=1,2$.  Let us choose small positive $\delta<\tau$, integrate \eqref{ident} over $\Omega\setminus \Omega_\delta$ and
integrate  by parts:
    \be\label{oooo}
    0=2\int_{\partial \Omega\setminus \partial \Omega_\delta}(Z\cdot\nabla u+H u)\partial_n
     u ds+\int_{\Omega\setminus \Omega_\delta} (Q\nabla u)\cdot\nabla udx-
\ee
$$  \int_{\partial \Omega\setminus \partial \Omega_\delta}|\nabla u|^2(Z\cdot n)ds+A_\delta^++A_\delta^-\, ,
$$
where
  \be  A_\delta^\pm=\mp2\int_0^{\phi( \pm\delta)}(Z\cdot\nabla u+H u)\partial_{x_1} u|_{x_1=\pm\delta}dx_2 \pm \int_0^{\phi( \pm\delta)}|\nabla u|^2 Z_1|_{x_1=\pm\delta}dx_2\, . \label{AAA}
  \ee
  Hence,
    $$
    0=2\int_{\Gamma\setminus \partial \Omega_\delta}(Z\cdot\nabla u+H u)(\partial_{n}-\nu)
     u dx_1+2\nu\int_{\Gamma\setminus \partial \Omega_\delta}(Z\cdot\nabla u+H u)
     u dx_1$$
     $$+2\int_{S\setminus \partial \Omega_\delta}(Z\cdot\nabla u+H u)\partial_n
     u ds+
    $$
     $$
     +\int_{\Omega\setminus \Omega_\delta}(Q\nabla u)\cdot\nabla udx-
        \int_{S\setminus \partial \Omega_\delta}|\nabla u|^2(Z\cdot n)ds+A_\delta^++A_\delta^-=
    $$
    $$
    2\nu\int_{\Gamma\setminus \partial \Omega_\delta}(Z_1\partial_{x_1}u+H u)
     u dx_1
     +$$
     $$\int_{\Omega\setminus \Omega_\delta} (Q\nabla u)\cdot\nabla udx-
        \int_{S\setminus \partial \Omega_\delta}|\nabla u|^2(Z\cdot n)ds+A_\delta^++A_\delta^-=
    $$
    $$
    \nu\int_{\Gamma\setminus \partial \Omega_\delta}(2H -\partial_{x_1} Z_1  )
     |u|^2 dx_1+$$
     \be \label{zvezda123}
\int_{\Omega\setminus \Omega_\delta} (Q\nabla u)\cdot\nabla udx-
        \int_{S\setminus \partial \Omega_\delta}|\nabla u|^2(Z\cdot n)ds +A_\delta^++A_\delta^-+B_\delta^++B_\delta^-.
   \ee
  Here
  \be  B_\delta^\pm=\mp\nu Z_1(\pm\delta, 0)
     u^2(\pm \delta,0).
  \ee
    Following \cite{KMV} (see p.76), we choose
    \be
    Z(x_1,x_2)=\left(x_1\frac{x_1^2-x_2^2}{x_1^2+x_2^2},\frac{2x_1^2x_2}{x_1^2+x_2^2}\right) \ \ \text{ and}
    \ \ H=1/2
    .
    \label{zh}
    \ee
    Then, in particular, the first term in the right hand side of
    \eqref{zvezda123} is equal to zero. Moreover it was also  verified in \cite{KMV}   (see p.76) that  the quadratic form $(Q\nabla u)\cdot\nabla u$ is non--positive. In fact it has been shown in \cite{KMV}, and can be verified by direct inspection, that
    \be    (Q\nabla u)\cdot\nabla u=-\left (2x_1x_2\partial_{x_1} u+ (x_2^2-x_1^2)\partial_{x_2} u\right)^2(x_1^2+x_2^2)^{-2}.
 \label{mazq}   \ee
Finally, Condition 2 insurers that
    \be \label{Mazcon55}(Z\cdot n)\geq 0 \,\,\, \text{ on}\,\,\  S .\ee

    Now  we need to investigate the behaviour of $A_\delta^\pm+B_\delta^\pm$ as $\delta\rightarrow 0$.
Due to \eqref{qqqq}, we have
\be\label{u6}u(x)=a^+x_1^{-1/2}\cos(\omega\ln x_1 +b^+)+O(x_1^{1/2}) , \ \    x\in\Omega_\tau^+, \ \ x_1\rightarrow +0,
\ee
and
$$u(x)=a^-(-x_1)^{-1/2}\cos(\omega\ln (-x_1) +b^-)+O(x_1^{1/2}) , \ \    x\in\Omega_\tau^-, \ \ x_1\rightarrow -0,
$$
where  $a^\pm$ and $ b^\pm$ are some real constants.
Consider for definiteness  $A_\delta^++B_\delta^+$. Then we employ Theorem \ref{vv} and obtain
$$
\partial_{x_1}u(x)=
a^+x_1^{-3/2}\left( -2^{-1}\cos(\omega\ln x_1 +b)-\omega \sin(\omega\ln x_1 +b )\right)$$
$$+O(x_1^{-1/2}), \ x\in\Omega_\tau^+, \ \ x_1\rightarrow +0,
$$
$$
\partial_{x_2} u(x)=O(x_1^{-1/2}), \ x\in\Omega_\tau^+, \ \ x_1\rightarrow +0.
$$
Moreover, we have from \eqref{zh}
 $$Z_1(x)=x_1+O(x_1^2),    \ x\in\Omega_\tau^+, \ \ x_1\rightarrow +0,$$
 and
 $$Z_2(x)=O(x_1^2),    \ x\in\Omega_\tau^+, \ \ x_1\rightarrow +0,$$
and consequently
$$  \left(\big(\partial_{x_1}u(x)\big)^2+\big(\partial_{x_2}(x)u\big)^2\right) Z_1(x)-2(Z(x)\cdot\nabla u(x)+H u(x)) \partial_{x_1}u(x)
$$
$$=- \partial_{x_1}u(x)\big(x_1 \partial_{x_1}u(x)+u(x)\big)+O(x_1^{-1})
$$
$$=-(a^+)^2x_1^{-2}\left( -2^{-1}\cos(\omega\ln x_1 +b)-\omega \sin(\omega\ln x_1 +b )\right)\times
$$
$$\left( 2^{-1}\cos(\omega\ln x_1 +b)-\omega \sin(\omega\ln x_1 +b )\right)+O(x_1^{-1})
$$
$$=(a^+)^2x_1^{-2}\left\{ 4^{-1}\cos^2(\omega\ln x_1 +b^+) -\omega^2 \sin^2(\omega\ln x_1 +b^+)\right\}
+O(x_1^{-1})
$$
$$=(a^+)^2x_1^{-2}\left\{ \left(4^{-1}+\omega^2\right)\cos^2(\omega\ln x_1 +b^+) - \omega^2\right\}
+O(x_1^{-1}),\ \ \ x\in\Omega_\tau^+, \ \ x_1\rightarrow +0.
$$
Next, using
$$\phi(x_1)=\kappa x_1^2/2+O(x_1^3),$$
we get
\be   A^{+}_\delta=2^{-1}(a^+)^2\kappa\left\{ \left(4^{-1}+\omega^2\right)\cos^2(\omega\ln \delta +b^+) - \omega^2\right\}
+O(\delta),\ \ \ \text{as} \ \delta\rightarrow +0.
\ee
For $B_\delta^+$, we have, using \eqref{u6},
$$ B_\delta^+=-\nu (a^+)^2 \cos^2(\omega\ln \delta +b^+)+O(\delta),\ \ \ \text{as} \ \delta\rightarrow +0.
$$
Finally using \eqref{ome}, we obtain
\be  A^{+}_\delta+B_\delta^+=-2^{-1}(a^+)^2\kappa\omega^2
+O(\delta),\ \ \ \text{as} \ \delta\rightarrow +0.
\ee
In the same way we derive,
\be  A^{-}_\delta+B_\delta^-=-2^{-1}(a^-)^2\kappa\omega^2
+O(\delta),\ \ \ \text{as} \ \delta\rightarrow +0.
\ee
Now we can pass to the limit in \eqref{zvezda123}, as $\delta\rightarrow 0$. We get
\be 0=
\int_{\Omega} (Q\nabla u)\cdot\nabla udx-
        \int_{S}|\nabla u|^2(Z\cdot n)ds-2^{-1}\kappa\omega^2\left((a^-)^2 +(a^+)^2\right) .
\label{kkk}\ee
As result we conclude via \eqref{mazq} and \eqref{Mazcon55} that $u\equiv0$.

Applying the same arguments to imaginary part of $v$ we obtain the same result.
\end{proof}
\begin{corollary} \label{coluniq} It follows from the proof of Theorem \ref{uniq} that Condition 2 implies Condition 1$'$.
\end{corollary}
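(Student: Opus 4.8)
The plan is to rerun the multiplier computation from the proof of Theorem \ref{uniq} in the critical regime $\nu=\kappa/8$, in which $\lambda_0=0$ and $\omega=\sqrt{2\nu/\kappa-1/4}=0$. Let $u\in V'(\Omega)$ solve the homogeneous problem \eqref{w1}--\eqref{w3}; as in Theorem \ref{uniq} it suffices to argue for the real and imaginary parts separately, so assume $u$ real. The first task is to identify the behaviour of $u$ at the two cusps. By Theorem \ref{pred2} together with the $\lambda_0=0$ structure recorded in Remark \ref{vip2}, near each cusp $u$ is a linear combination of $\textbf{v}^+$ and $\textbf{v}^-$ modulo a remainder in $\mathcal{V}^2_\gamma(\Omega_\delta^\pm)$, $\gamma>0$. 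The mode $\textbf{v}^+$ carries the leading term $|x_1|^{-1/2}\ln|x_1|$, whose Dirichlet integral over a cusp diverges even after subtraction of an arbitrary multiple of $|x_1|^{-1/2}$; hence its presence is incompatible with the decomposition $u=c^+x_1^{-1/2}\zeta_\tau^++c^-|x_1|^{-1/2}\zeta_\tau^-+\widetilde u$ with $\widetilde u$ of finite energy that defines $V'(\Omega)$. Consequently the coefficient of $\textbf{v}^+$ vanishes and, since $\omega=0$, we obtain the purely algebraic, non-oscillatory asymptotics $u(x)=a^\pm|x_1|^{-1/2}+O(|x_1|^{1/2})$ as $x_1\to\pm0$, with real constants $a^\pm$.

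With this in hand I would apply the identity \eqref{ident} with the vector field $Z$ and the constant $H=1/2$ of \eqref{zh}, integrate over $\Omega\setminus\Omega_\delta$, and integrate by parts precisely as in \eqref{oooo}--\eqref{zvezda123}. The first term on the right of \eqref{zvezda123} vanishes by the choice \eqref{zh}; the bulk term $\int_{\Omega\setminus\Omega_\delta}(Q\nabla u)\cdot\nabla u\,dx$ is non-positive by \eqref{mazq}; and $-\int_{S\setminus\partial\Omega_\delta}|\nabla u|^2(Z\cdot n)\,ds$ is non-positive by Condition 2, see \eqref{Mazcon55}. Both converge as $\delta\to0$ for the same reasons as in Theorem \ref{uniq} (in particular $Z$ is nearly tangent to $S$ at the cusp, so $Z\cdot n=O(|x_1|^3)$ there). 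It remains to evaluate the cusp contributions $A_\delta^\pm+B_\delta^\pm$ of \eqref{AAA}. Substituting $u=a^\pm|x_1|^{-1/2}+O(|x_1|^{1/2})$ together with $Z_1=x_1+O(x_1^2)$, $Z_2=O(x_1^2)$ and $\phi(x_1)=\tfrac12\kappa x_1^2+O(x_1^3)$, a direct computation gives $A_\delta^\pm=\tfrac{\kappa}{8}(a^\pm)^2+O(\delta)$ and $B_\delta^\pm=-\nu(a^\pm)^2+O(\delta)$.

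The decisive point is that, because $\nu=\kappa/8$, these two leading contributions cancel, so $A_\delta^\pm+B_\delta^\pm=(\tfrac{\kappa}{8}-\nu)(a^\pm)^2+O(\delta)\to0$ as $\delta\to0$; this is exactly the vanishing of the term $-\tfrac12\kappa\omega^2((a^+)^2+(a^-)^2)$ of \eqref{kkk} at $\omega=0$. Passing to the limit in \eqref{zvezda123} thus yields
\be
0=\int_{\Omega}(Q\nabla u)\cdot\nabla u\,dx-\int_{S}|\nabla u|^2(Z\cdot n)\,ds,
\ee
a sum of two non-positive quantities. Hence each vanishes; by \eqref{mazq} and \eqref{Mazcon55} this forces $u\equiv0$, just as at the end of the proof of Theorem \ref{uniq}. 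Applying the same argument to $\mathrm{Im}\,v$ kills the original solution, which is precisely the statement that Condition 2 implies Condition 1$'$.

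I expect the genuine obstacle to lie in the first paragraph rather than in the cancellation. One must justify carefully that $V'(\Omega)$-membership really eliminates the logarithmic mode $\textbf{v}^+$, and that the surviving remainder is regular enough (in an appropriate $\mathcal{V}^2_\gamma$) both to license the integration by parts over $\Omega\setminus\Omega_\delta$ and to guarantee that the $O(\delta)$ errors in $A_\delta^\pm,B_\delta^\pm$, as well as the convergence of the surface and bulk integrals, are controlled by only the borderline $|x_1|^{-1/2}$ singularity of $u$. Once the asymptotic structure is secured, the cancellation $A_\delta^\pm+B_\delta^\pm\to0$ is merely the $\omega=0$ specialisation of the computation already performed for Theorem \ref{uniq}.
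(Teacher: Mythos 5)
Your proposal is correct and is essentially the argument the paper intends: the corollary is stated as a by-product of the proof of Theorem \ref{uniq}, and your computation is exactly its $\omega=0$ specialisation, with the identity $A_\delta^\pm+B_\delta^\pm=-\tfrac12\kappa\omega^2(a^\pm)^2+O(\delta)$ degenerating to the cancellation $\tfrac{\kappa}{8}(a^\pm)^2-\nu(a^\pm)^2=0$, after which the vanishing of the non-positive bulk and surface terms forces $u\equiv0$ via \eqref{mazq} and \eqref{Mazcon55} just as in the paper. Your preliminary step eliminating the logarithmic mode $\textbf{v}^+$ through the definition of $V'(\Omega)$ is a necessary detail the paper leaves implicit, and your reasoning for it (the Dirichlet integral of $|x_1|^{-1/2}\ln|x_1|$ over the cusp diverges even after subtracting any multiple of $|x_1|^{-1/2}$) is sound.
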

Now Theorem \ref{s22} easily follows.
\begin{proof} (of Theorem \ref{s22})  It follows from the properties of scattering matrix $S$, see Theorem \ref{SSS}, that it is enough to prove only one of the claims of the Theorem \ref{s22}. Let us prove that $\det S_{(2,1)}$ is not zero. If it is not so, then  there is a non zero row $a=(a_1,a_2)$ such that $aS_{(2,1)}=(0,0)$ and consequently the function $u=a_1\eta_3+a_2\eta_4$ is not identically zero and satisfies the conditions of Theorem \ref{uniq}. This delivers a contradiction.
\end{proof}

Theorem \ref{s22} allows us to formulate other existence and uniqueness results.
\begin{theorem} \label{uniq1} Suppose Condition 2 is satisfied and $\nu>\kappa/8$. Assume further that
$\{f,g\}\in L_2(\Omega)\times H^{1/2}(\partial \Omega)$ and have compact supports separated from the origin.
 Then there is  the unique solution of the problem \eqref{w1}-\eqref{w3}, $u$, in the space $ H^2_{loc}(\overline{\Omega})\cap L^\infty(\Omega)$. Moreover,

 \be  \label{uuu} u= \sum_{j=1}^2 c_j^+ u_j^{+}+\sum_{j=1}^2 c_j^- u_j^{-}+\widetilde{u},
 \ee
where $\widetilde{u}\in \mathcal{H}_\gamma^2(\Omega)$, for any $\gamma$ and $c_j^\pm$, $j=1,2$ are constants.
\end{theorem}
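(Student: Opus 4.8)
The plan is to read the hypothesis $u\in L^\infty(\Omega)$ as the statement that $u$ carries \emph{no} waves at the cusps, and then to produce the solution by correcting the radiation solution of Theorem \ref{toer} with a suitable combination of the scattering solutions $\eta_1,\eta_2$ of Theorem \ref{SSS}. Since $f$ and $g$ have compact supports separated from $O$, the problem is homogeneous near the origin, so Theorems \ref{pred2} and \ref{vv} apply and any solution admits near the cusps the detached asymptotics $\sum_{j=1}^4 c_jU_j^0+\tilde u$. The cusp waves $u_3^\pm,u_4^\pm$ behave like $|x_1|^{-1/2\pm i\omega}$ and are thus the only terms unbounded as $x_1\to0$, whereas the remainder lies in $\mathcal{H}^2_\gamma(\Omega)$ and is bounded. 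Hence, for a solution of the form \eqref{uuu}, boundedness near the cusps is equivalent to the vanishing of all four cusp coefficients, i.e. to $\tilde u\in\mathcal{H}^2_\gamma$ there.

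For existence I would first invoke Theorem \ref{toer} to obtain the radiation solution $v=\sum_{j=1}^4 c_jU_j^0+\tilde v$ (its hypotheses are met, since away from $O$ the spaces $\mathcal{W}^0_0$ and $\mathcal{W}^{1/2}_0$ coincide with $L_2$ and $H^{1/2}$). This $v$ satisfies the radiation condition at the cusps, so it contains only the outgoing cusp waves $u_3^-,u_4^-$ with coefficients $(c_3,c_4)$ and no incoming cusp waves. The decisive step is to cancel these: because $\eta_1,\eta_2$ carry incoming waves only at infinity, the function $u:=v-a_1\eta_1-a_2\eta_2$ still has no incoming cusp waves, while its outgoing cusp coefficients become $(c_3,c_4)-(a_1,a_2)S_{(1,2)}$. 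By Theorem \ref{s22} the matrix $S_{(1,2)}$ is invertible, so I may solve $(a_1,a_2)S_{(1,2)}=(c_3,c_4)$ uniquely; the resulting $u$ is free of all cusp waves, hence bounded near the cusps, and at infinity inherits the form $\sum_{j=1}^2(c_j^+u_j^++c_j^-u_j^-)$ plus an $\mathcal{H}^2_\gamma$ remainder, which is precisely \eqref{uuu}. Elliptic regularity gives $u\in H^2_{loc}(\overline\Omega)$, and boundedness of the waves together with the remainder gives $u\in L^\infty$.

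For uniqueness I would take the difference $w$ of two solutions in $H^2_{loc}\cap L^\infty$; it solves the homogeneous problem and is bounded. Boundedness near the cusps forces $w$ to be cusp-regular (no cusp waves). A flux (unitarity) argument as in the proof of Theorem \ref{SSS} shows that $w$ must lie in $\mathrm{span}(\eta_1,\dots,\eta_4)$: subtracting the combination matching the incoming data of $w$ leaves a homogeneous solution of zero incoming flux, hence zero outgoing flux, hence an element of $\mathcal{H}^2_\gamma(\Omega)$, which vanishes by Theorem \ref{uniq}. Since $w$ has no incoming cusp waves it involves only $\eta_1,\eta_2$, and since it has no outgoing cusp waves its coefficients satisfy $(a_1,a_2)S_{(1,2)}=0$; invertibility of $S_{(1,2)}$ then yields $w\equiv0$.

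The main obstacle, I expect, is the structural step in the uniqueness argument: establishing that an arbitrary bounded homogeneous solution genuinely has detached wave asymptotics at infinity and therefore belongs to $\mathrm{span}(\eta_1,\dots,\eta_4)$, together with the verification that membership in $\mathcal{H}^2_\gamma$ (the $\dot{H}^2$ behaviour at infinity and the $\mathcal{V}^2_\gamma$ behaviour at the cusp) does imply the $L^\infty$ bounds, so that the equivalence ``$L^\infty\Leftrightarrow$ no cusp waves'' is rigorous. Once this is in place, the invertibility of $S_{(1,2)}$ from Theorem \ref{s22} does the remaining work, and the argument is pleasantly symmetric between existence and uniqueness.
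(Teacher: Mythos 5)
Your proposal follows the paper's own proof essentially verbatim: existence by correcting the radiation solution of Theorem \ref{toer} with $a_1\eta_1+a_2\eta_2$, where $(a_1,a_2)S_{(1,2)}=(d_3,d_4)$, and uniqueness by reducing a bounded homogeneous solution to a combination of $\eta_1,\eta_2$ and invoking the invertibility of $S_{(1,2)}$ from Theorem \ref{s22}. The only step you flag without carrying out --- that a solution in $H^2_{loc}(\overline{\Omega})\cap L^\infty(\Omega)$ genuinely admits the detached asymptotics \eqref{uuu} --- is precisely where the paper spends its effort, so let me record how it is done. Near the cusps it is quick: since $f,g$ vanish near $O$ and $u\in H^2(\Omega_\tau)$, Theorems \ref{pred2} and \ref{vv} apply, and the cusp waves $|x_1|^{-1/2\pm i\omega}$, which fail to lie in $H^1(\Omega_\tau)$ (and are unbounded), must carry zero coefficients, so $u\in\mathcal{H}^2_\gamma(\Omega_\tau)$ for every $\gamma$. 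At infinity the paper reduces to the half-plane problem of Section 2: in $\Omega\setminus B_{2N}$ the function solves \eqref{aa10}, \eqref{aa30} with compactly supported data; subtracting the radiation solution $v_1$ constructed there leaves a bounded solution of the homogeneous half-plane problem, which is a linear combination of $e^{\pm i\nu x_1-\nu x_2}$, and this yields \eqref{uuu} with $\widetilde u\in\dot{H}^2$ at infinity. With that comparison argument supplied, your flux argument and the block property of $S$ close the uniqueness exactly as in the paper.
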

\begin{proof}
 Theorem \ref{toer} delivers us the unique solution of the problem \eqref{w1}-\eqref{w3}, $v\in \mathbb{H}^2_{\gamma,0} (\Omega)$ for any $\gamma>0$,
i.e.
 \be  v= \sum_{j=1}^4 d_j u_j^{-}+\widetilde{v},
 \ee
where  $\widetilde{v}\in \mathcal{H}^2_{\gamma} (\Omega)$ for all $\gamma>0$.
Consider the function
\be \label{uuuu} u:=v-a_1\eta_1-a_2\eta_2,
\ee
where a row $a=(a_1,a_2)$  solves
$$ a S_{(1,2)}=(d_3,d_4).
$$
The solution exists due to Theorem \ref{s22}. It is easy to see that $u$ defined by \eqref{uuuu} is a solution of \eqref{w1}-\eqref{w3} and has the structure \eqref{uuu} with
$\widetilde{u}\in \mathcal{H}^2_{\gamma} (\Omega)$ for any $\gamma>0$. The inclusion $\widetilde{u}\in \mathcal{H}^2_{\gamma} (\Omega)$, for  $\gamma\leq 0$, follows from Theorems \ref{pred2} and \ref{vv}. Clearly $u\in H^2_{loc}(\overline{\Omega})\cap L^\infty(\Omega)$.

Let us discus uniqueness. Consider some $u\in H^2_{loc}(\overline{\Omega})\cap L^\infty(\Omega)$ which is a solution of \eqref{w1}-\eqref{w3}. If we additionally know that representation \eqref{uuu} is valid for $u$, then
 uniqueness follows immediately from Theorem \ref{uniq}.

 Let us prove that representation \eqref{uuu} is valid.
It follows from Theorems \ref{pred2} and \ref{vv} that $u\in \mathcal{H}_\gamma^2(\Omega_\tau)$ for any $\gamma$, since $f$ and $g$ have support separated from the origin and $u\in H^2_{loc}(\overline{\Omega})$. Now we need to show that the representation \eqref{uuu} is valid at infinity. But this is the same as to prove that  any solution $v\in H^2_{loc}(\mathbb{R}^2_+)\cap L^\infty(\mathbb{R}^2_+)$ of \eqref{aa10},\eqref{aa30} with compactly supported $f\in L_2(\mathbb{R}^2_+)$ can be represented as:
\be  \label{uuu77} v= \sum_{j=1}^2 c_j^+ u_j^{+}+\sum_{j=1}^2 c_j^- u_j^{-}+\widetilde{v},
 \ee
where  $\widetilde{v}\in \dot{H}^2(\mathbb{R}^2_+)$. Consider  solution $v_1$ of the problem \eqref{aa10},\eqref{aa30} in the space with radiation conditions. Clearly representation \eqref{uuu77} is valid for $v_1$ (in fact coefficients next to outgoing waves are zero) and $v_1\in H^2_{loc}(\mathbb{R}^2_+)\cap L^\infty(\mathbb{R}^2_+)$. Then
$w:=v-v_1\in H^2_{loc}(\mathbb{R}^2_+)\cap L^\infty(\mathbb{R}^2_+)$ is a solution of \eqref{aa10},\eqref{aa30} with zero right hand side and consequently $w$ is a linear combination of functions $e^{-i\nu x_1-\nu x_2}$ and $e^{i\nu x_1-\nu x_2}$. We see that representation \eqref{uuu77} is valid for $w$, $v_1$ and consequently is valid for $v$. This completes the proof.
\end{proof}

In a similar way we prove the next result.

\begin{theorem} \label{uniq2} Suppose Condition 2 is satisfied and $\nu>\kappa/8$. Assume further that
$\{f,g\}\in L_2(\Omega)\times H^{1/2}(\partial \Omega)$ and have compact support separated from the origin.
 Then there is  a unique solution of the problem \eqref{w1}-\eqref{w3} $w$ in the space
 $$ H^2_{loc}(\overline{\Omega}\setminus O)\cap L^p(\Omega),\ \ p\in (2,6). $$
 Moreover, $w\in \mathcal{H}^2_\gamma$,  for any $\gamma>1/2$ and can be represented as
 \be  \label{wwww55} w= \sum_{j=3}^4 c_j^+ u_j^{+}+\sum_{j=3}^4 c_j^- u_j^{-}+\widetilde{w},
 \ee
where $\widetilde{w}\in \mathcal{H}_\gamma^2(\Omega)$ for any $\gamma>0$ and $c_j^\pm$, $j=1,2$, are some constants.
\end{theorem}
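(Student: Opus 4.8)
The plan is to mirror the proof of Theorem~\ref{uniq1}, interchanging the roles of the waves at infinity and the waves in the cusps. For existence I would start from the solution $v\in\mathbb{H}^2_{\gamma,0}(\Omega)$ provided by Theorem~\ref{toer}, written as $v=\sum_{j=1}^4 d_j u_j^-+\widetilde v$ with $\widetilde v\in\mathcal{H}^2_\gamma(\Omega)$ for all $\gamma>0$; here $d_1,d_2$ are the amplitudes of the outgoing waves at the two infinities and $d_3,d_4$ those in the two cusps. To remove the infinity waves I would set $w:=v-b_3\eta_3-b_4\eta_4$, where $\eta_3,\eta_4$ are the cusp scattering solutions of Theorem~\ref{SSS} (which solve the homogeneous problem) and the row $b=(b_3,b_4)$ solves $bS_{(2,1)}=(d_1,d_2)$. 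This system is solvable since $\det S_{(2,1)}\neq0$ by Theorem~\ref{s22}. Inserting the expansions \eqref{rrrr2} of $\eta_3,\eta_4$ and using $bS_{(2,1)}=(d_1,d_2)$, the coefficients of $u_1^-$ and $u_2^-$ cancel, and what remains is exactly \eqref{wwww55}: the remainder is $\widetilde w=\widetilde v-b_3\tilde\eta_3-b_4\tilde\eta_4\in\mathcal{H}^2_\gamma(\Omega)$ for all $\gamma>0$, with $c_3^+=-b_3$, $c_4^+=-b_4$ and the $c_j^-$ the corresponding linear combinations of the $d_j$ and $S_{jn}$.

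Next I would verify the asserted membership. Near the cusps the waves $u_3^\pm,u_4^\pm$ behave like $|x_1|^{-1/2\pm i\omega}$, so by Remark~\ref{vip} they lie in $\mathcal{V}^2_\gamma(\Omega_\tau)$ for every $\gamma>1/2$; together with $\widetilde w\in\mathcal{V}^2_\gamma$ for all $\gamma>0$ this gives $w\in\mathcal{H}^2_\gamma(\Omega)$ for every $\gamma>1/2$. Using $\phi(x_1)=\kappa x_1^2/2+O(x_1^3)$, a term of modulus $|x_1|^{-1/2}$ satisfies $\int_{\Omega_\tau}|x_1|^{-p/2}\,dx\sim\int_0^\tau x_1^{2-p/2}\,dx_1<\infty$ precisely when $p<6$, so the cusp waves are in $L^p$ exactly for $p<6$. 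At infinity $w$ coincides with its wave-free remainder, which lies in $\dot H^2(\Omega\setminus B_{\tau/2})$ and decays like $O(|x|^{-1})$ (cf.\ \eqref{radcondinf}); such decay is $L^p$-integrable precisely for $p>2$. Hence $w\in H^2_{loc}(\overline\Omega\setminus O)\cap L^p(\Omega)$ for every $p\in(2,6)$, and the two-sided constraint on $p$ is seen to come from the cusp (upper bound $6$) and from infinity (lower bound $2$).

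For uniqueness it suffices to prove that any solution $W$ of the homogeneous problem lying in $H^2_{loc}(\overline\Omega\setminus O)\cap L^p(\Omega)$ already belongs to $\mathcal{H}^2_\gamma(\Omega)$ for some $\gamma$, since Theorem~\ref{uniq} then gives $W\equiv0$; applying this to the difference of two solutions yields uniqueness, and the representation \eqref{wwww55} follows from Theorems~\ref{pred2} and \ref{vv}. Near the cusps, as the data are supported away from $O$ and $W\in H^2_{loc}(\overline\Omega\setminus O)\cap L^p$, local (scaled) elliptic estimates place $W$ in some $\mathcal{W}^2_{\gamma_0}(\Omega_\tau)$, after which Theorems~\ref{pred2}, \ref{vv} yield $W=\sum_{j=3}^4 c_j^\pm u_j^\pm+\widetilde W$ with $\widetilde W\in\mathcal{H}^2_\gamma$ for all $\gamma>0$; since $u_3^\pm,u_4^\pm$ are the only admissible asymptotic terms and lie in $\mathcal{V}^2_\gamma$ for $\gamma>1/2$, this gives $W\in\mathcal{V}^2_\gamma(\Omega_\tau)$ for $\gamma>1/2$. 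At infinity $W$ solves the homogeneous half-plane problem \eqref{aa10},\eqref{aa30}; its asymptotics there consist of the surface waves $e^{\pm i\nu x_1-\nu x_2}$ plus an $O(|x|^{-1})$ remainder, and since the surface waves do not decay in $x_1$ they are not $L^p$-integrable, so finiteness of $\|W\|_{L^p(\Omega)}$ forces their amplitudes to vanish and leaves $W\in\dot H^2(\Omega\setminus B_{\tau/2})$. Combining the two regions gives $W\in\mathcal{H}^2_\gamma(\Omega)$ for every $\gamma>1/2$.

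The main obstacle I expect is the infinity analysis in the uniqueness step: one must justify the asymptotic expansion at infinity for a solution known only to lie in $L^p$, and then conclude that the non-$L^p$ surface-wave amplitudes vanish. This is the $L^p$-analogue of the $L^\infty$ reduction to representation \eqref{uuu77} at the end of Theorem~\ref{uniq1}; the essential difference is that here finiteness of the $L^p$ norm with $p<\infty$ kills the surface waves outright, whereas in the $L^\infty$ setting they survive and must be carried along. A secondary technical point is the bootstrap from mere $L^p$ integrability near the cusp to membership in a weighted space $\mathcal{W}^2_{\gamma_0}$, which is needed before Theorems~\ref{pred2} and \ref{vv} can be invoked; this should follow from dyadic rescaling of the local elliptic estimates adapted to the quadratic cusp geometry.
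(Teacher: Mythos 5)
Your proposal is correct and follows essentially the same route as the paper: subtract $b_3\eta_3+b_4\eta_4$ with $bS_{(2,1)}=(d_1,d_2)$ (solvable by Theorem \ref{s22}) for existence, and for uniqueness reduce any $L^p$ solution to the representation \eqref{wwww55} via the cusp analysis of Theorems \ref{pred2}, \ref{vv} and, at infinity, via subtraction of the half-plane radiation solution and the observation that none of the surface waves lies in $L^p(\mathbb{R}^2_+)$ for finite $p$. The two technical points you flag (the dyadic bootstrap from $L^p$ near the cusp and the asymptotic expansion at infinity for an $L^p$ solution) are exactly the ones the paper handles, and in the same way.
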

\begin{proof}
 Theorem \ref{toer} delivers us the unique solution of the problem \eqref{w1}-\eqref{w3}, $v\in \mathbb{H}^2_{\gamma,0} (\Omega)$ for any $\gamma>0$,
i.e.
 \be  v= \sum_{j=1}^4 d_j u_j^{-}+\widetilde{v},
 \ee
where  $\widetilde{v}\in \mathcal{H}^2_{\gamma} (\Omega)$ for any $\gamma>0$.
Consider the function
\be \label{wwww66} w:=v-a_1\eta_3-a_2\eta_4,
\ee
where the row $a=(a_1,a_2)$  solves
$$ a S_{(2,1)}=(d_1,d_2).
$$
The solution exists due to Theorems \ref{s22}. Then it follows from \ref{SSS}  that $w$ defined by \eqref{wwww66} is a solution of \eqref{w1}-\eqref{w3} and has the structure \eqref{wwww55} with
$\widetilde{w}\in \mathcal{H}^2_{\gamma} (\Omega)$ for any $\gamma>0$.
It follows from Theorem \ref{vv} that $w\in L_p(\Omega_\tau)$ for any $p<6$. On the other hand, \eqref{radcondinf} implies
$w\in L_p(\Omega\setminus \Omega_\tau)$ for any $p>2$. Consequently $w\in H^2_{loc}(\overline{\Omega}\setminus O)\cap L^p(\Omega),\ \ p\in (2,6)$.

Let us discus uniqueness. Consider some $w\in H^2_{loc}(\overline{\Omega}\setminus O)\cap L^p(\Omega),\ \ p\in (2,6)$ which is a solution of \eqref{w1}-\eqref{w3}. If we additionally know that representation \eqref{wwww55} is valid for $w$, then
 uniqueness follows immediately from Theorem \ref{uniq}.

 Let us prove that representation \eqref{wwww55} is valid. It is easy to see, that since $w\in L_p(\Omega_\tau)$ for any $p<6$ and is a solution of \eqref{w1}-\eqref{w3} with $f$ and $g$ having support separated from the origin, $w\in \mathcal{H}_\gamma^2(\Omega)$ for some large $\gamma$.
Then, employing  Theorems \ref{pred2} and \ref{vv}, we conclude that $w\in \mathcal{H}_\gamma^2(\Omega_\tau)$ for any $\gamma>1/2$ and representation \eqref{wwww55} is valid in the neighbourhood of the origin.

Let us consider representation \eqref{wwww55} at infinity. We need to prove that if $w\in H^2_{loc}(\overline{\Omega}\setminus O)\cap L^p(\Omega),\ \ p\in (2,6)$ then $w\in \dot{H}^2(\Omega\setminus \Omega_\tau)$.
 However this is the same as to prove that  any solution $u\in H^2_{loc}(\mathbb{R}^2_+)\cap L^p(\mathbb{R}^2_+)$, $p\in (2,6)$, of \eqref{aa10},\eqref{aa30} with compactly supported $f\in L_2(\mathbb{R}^2_+)$  is in fact in $\dot{H}^2(\mathbb{R}^2_+)$.
 Consider  solution $u_1$ of the problem \eqref{aa10},\eqref{aa30} which has been obtained in Section 2, see discussion following Theorem 2.2. Clearly $u_1$ can be represented as
 $$
 u_1= c_1 u_1^-+c_2 u_2^-+ \widetilde{u}_1,
$$
where
$\widetilde{u}_1\in \dot{H}^2(\mathbb{R}^2_+)$ and $c_1,c_2$ are some constants. Moreover it follows from \eqref{radcondinf} that $u_1 \in L^p(\mathbb{R}^2_+)$, $p\in (2,6)$.
 Then
$w:=u-u_1\in H^2_{loc}(\mathbb{R}^2_+)\cap L^\infty(\mathbb{R}^2_+)$ is a solution of \eqref{aa10},\eqref{aa30} with zero right hand side, and consequently $w$ is a linear combination of functions $e^{-i\nu x_1-\nu x_2}$ and $e^{i\nu x_1-\nu x_2}$. As result, we see that
$$
 u= c_1 u_1^-+c_2 u_2^-+ b_1e^{-i\nu x_1-\nu x_2}+b_2e^{i\nu x_1-\nu x_2}+ \widetilde{u}_1,
$$
where $b_1$ and $b_2$ are some constants. We know that $u\in H^2_{loc}(\mathbb{R}^2_+)\cap L^p(\mathbb{R}^2_+)$, $p\in (2,6)$, on the other hand functions $
 u_1^-(x), \ u_2^-(x),\  e^{-i\nu x_1-\nu x_2}$ and $e^{i\nu x_1-\nu x_2} $
 do not belong to $L^p(\mathbb{R}^2_+)$, $p\in (2,6)$ and are linearly independent. Consequently $u$ coincides with $\widetilde{u}_1$ which is in
$\dot{H}^2(\mathbb{R}^2_+)$.
 This ends the prove.
\end{proof}

Solutions $u$ and $w$, delivered by Theorems \ref{uniq1} and \ref{uniq2}, do not satisfy radiation conditions in general, and cannot be obtained as a result of limiting absorption procedure. However their description is simple, $u$ is bounded and $w$ decays at the infinity. It is worth emphasising that  these results rely on Condition 2.

We conclude with brief remarks on how one can define a scattering matrix for the case $\nu=\kappa/8$.
We follow  constructions  which appeared in \cite{NP} for domains with conical points and \cite{KN1}-\cite{KN3} for periodic media.
First we introduce incoming and outgoing waves in the cusps, for the \textit{threshold } case $\nu=\kappa/8$:
\be      u_3^\pm:=\left(\textbf{v}^{+}\pm i\textbf{v}^{-}\right)\zeta_\tau^+,
\label{u3cr}\ee
\be      u_4^\pm:=\left(\textbf{v}^{+}\mp i\textbf{v}^{-}\right)\zeta_\tau^-,
\label{u4cr}\ee
see \eqref{sss5} and \eqref{sss6}.
Waves $u_1^\pm$ and $u_2^\pm$ are defined according to \eqref{qwe} and \eqref{qwe2}.
Arguing in the same way as for the case $\nu>\kappa/8$, we obtain
\begin{theorem} \label{SSScrit} Suppose that $\nu=\kappa/8$ and  Condition 1$'$ is satisfied. Then there exist four linearly independent solutions of homogeneous problem \eqref{w1}-\eqref{w3}, $\eta_j, j=1,..,4$,
such that
\be \eta_j=u_j^{+}+\sum_{n=1}^{4}a_{jn}u_n^{-}+ \tilde{\eta}_j, \label{rrrr22}
\ee
where $\tilde{\eta}_j\in \mathcal{H}^2_{\gamma}$, for any $\gamma>0$.  Condition \eqref{rrrr22} determines $\eta_j$ uniquely.
\end{theorem}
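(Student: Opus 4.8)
The plan is to mirror the proof of Theorem \ref{SSS}, replacing Theorem \ref{toer} by its threshold extension Corollary \ref{toercrit} throughout, and replacing the oscillatory cusp waves by the logarithmically modified combinations \eqref{u3cr}, \eqref{u4cr} supplied by Remark \ref{vip2}. Since $\nu=\kappa/8$ forces $\lambda_0=0$, the indicial roots $\Lambda_0^\pm$ coincide at $1/2$ and the homogeneous cusp solutions $Y_0^\pm$ now carry the logarithmic factors recorded in \eqref{sss5}, \eqref{sss6}. Nonetheless the pair $\textbf{v}^\pm$ still spans the two–dimensional space of leading cusp asymptotics, so the prescriptions \eqref{u3cr}, \eqref{u4cr} define four admissible incoming/outgoing waves in the two cusps, and the whole limiting–absorption machinery of Section 3 remains available provided Condition 1 is upgraded to Condition 1$'$.

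First I would construct $\eta_1$ and $\eta_2$ exactly as in Theorem \ref{pros}: seek $\eta_j=u_j^++\xi_j$ with $\xi_j$ solving \eqref{w1}--\eqref{w3} with data $f=0$, $g_2=0$, $g_1=-\partial_n u_j^+|_S$. Because $u_j^+$ is smooth and supported away from the cusp, the data lies in $\mathcal{W}^0_\gamma\times\mathcal{W}^{1/2}_\gamma$ for every $\gamma>0$, and Corollary \ref{toercrit} (valid at $\nu=\kappa/8$ under Condition 1$'$) produces a unique $\xi_j\in\mathbb{H}^2_{\gamma,0}$. Reading off its detached asymptotics $\xi_j=\sum_n c_nU_n^0+\widetilde v$ and recalling that the $U_n^0$ coincide with the outgoing waves $u_n^-$ gives \eqref{rrrr22} for $j=1,2$.

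For $\eta_3$ and $\eta_4$ I would follow the $\eta_3$–construction of Theorem \ref{SSS}, but feed in the correct threshold combination. For $\eta_3$ set $\eta_3=(Y_0^++iY_0^-)\zeta_\delta^++\xi_3$ with $\delta$ small. Since $Y_0^\pm$ solve the homogeneous cusp problem \eqref{b1}--\eqref{b3} with $\ve=0$, applying $\Delta$, $\partial_n$ and $\partial_n-\nu$ to $(Y_0^++iY_0^-)\zeta_\delta^+$ produces data supported where $\nabla\zeta_\delta^+\neq 0$, hence separated from $O$ and lying in $\mathcal{W}^0_\gamma\times\mathcal{W}^{1/2}_\gamma$ for all $\gamma>0$. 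Corollary \ref{toercrit} then yields $\xi_3\in\mathbb{H}^2_{\gamma,0}$, and by Remark \ref{vip2} the combination $(Y_0^++iY_0^-)\zeta_\delta^+$ equals $u_3^+$ modulo a remainder in $\mathcal{H}^2_\gamma$; collecting the outgoing waves carried by $\xi_3$ gives \eqref{rrrr22} with $j=3$, and symmetrically for $j=4$ using $\zeta_\delta^-$ and the conjugate combination \eqref{u4cr}.

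Uniqueness of each $\eta_j$ subject to \eqref{rrrr22} follows as in Theorem \ref{SSS}: the difference of two solutions with the same incoming part is a homogeneous solution lying in $\mathbb{H}^2_{\gamma,0}$ with no incoming wave, hence vanishes by the uniqueness half of Corollary \ref{toercrit}; linear independence of the four $\eta_j$ is then immediate, since the $u_j^+$ are independent and the remaining terms are of lower order near their respective singularities. The main obstacle, and the precise reason Condition 1$'$ replaces Condition 1, is the threshold degeneracy $\lambda_0=0$: the $|x_1|^{-1/2}$ and $|x_1|^{-1/2}\ln|x_1|$ behaviour of $\textbf{v}^\pm$ enlarges the ambient energy space to $V'(\Omega)$, and one must verify that Lemma \ref{occom} and the existence/uniqueness of Theorem \ref{toer} survive this enlargement — which is exactly what Corollary \ref{toercrit} records. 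I would finally emphasise that, unlike Theorem \ref{SSS}, no unitarity or symmetry of $(a_{jn})$ is asserted here, since at the threshold the flux normalisation underlying those identities degenerates.
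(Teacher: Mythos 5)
Your proposal is correct and follows essentially the same route as the paper, which itself only says ``arguing in the same way as for the case $\nu>\kappa/8$'': build $\eta_1,\eta_2$ as in Theorem \ref{pros} and $\eta_3,\eta_4$ from the threshold combinations $Y_0^{+}\pm iY_0^{-}$ cut off near the cusps, invoking Corollary \ref{toercrit} in place of Theorem \ref{toer}, with uniqueness and linear independence read off from the detached asymptotics. The only (harmless) slip is in the data for $\xi_1,\xi_2$: with the cut-off wave $u_j^{+}=\chi(\pm x_1)e^{\pm i\nu x_1-\nu x_2}$ one has $\partial_n u_j^{+}|_S=0$ and the true forcing is $f=-[\Delta,\chi(\pm x_1)]e^{\pm i\nu x_1-\nu x_2}\neq 0$, whereas the paper starts from the full exponential so that $f=0$ and $g_1=-\partial_n e^{\pm i\nu x_1-\nu x_2}|_S$; either choice feeds admissible data into Corollary \ref{toercrit} and does not affect the argument.
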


\appendix
\section{Appendix}

In this appendix we prove Theorem \ref{lsem2} and obtain some results which may be of their own interest.

It is clear that one needs to prove the estimate \eqref{ocep3}  only for small $\ve$, therefore we will assume that $\ve\leq 2\beta\nu$.
The desired estimate for the coefficients $b_1^\ve$ and $b_2^\ve$ follows from the explicit formulae, by multiplying \eqref{aa1} by the solutions of the homogeneous problem \eqref{aa1}, \eqref{aa3} and integrating over $\mathbb{R}_+^2$. As a result, upon a straightforward integration by part we get
\be \label{bbb1} b_1^\ve=
i\nu (\nu^2-i\ve)^{-1/2}
\int_{\mathbb{R}_+^2}f(x)e^{i(\nu^2-i\ve)^{1/2} x_1-\nu x_2}dx,
\ee
\be \label{bbb2} b_2^\ve= i\nu (\nu^2-i\ve)^{-1/2} \int_{\mathbb{R}_+^2}f(x)e^{-i(\nu^2-i\ve)^{1/2} x_1-\nu x_2}dx.
\ee

The remainder $\tilde{u}_\ve\in W_{\beta^*,1}^2$  (see \eqref{rep000} ) solves the problem
  \be
    \Delta \widetilde{u}_\ve- i \ve\tilde{u}_\ve=F ,\ \text{in}\  \mathbb{R}_+^2, \label{naa1}
\ee
\be
    \partial_n \tilde{u}_\ve-\nu \tilde{u}_\ve=0,\ \text{on}\    \Gamma, \label{naa3}
\ee
where
\be  F(x)=f(x)-b_1^\ve [\Delta,\chi(x_1)]e^{-i(\nu^2-i\ve)^{1/2} x_1-\nu x_2}-b_2^\ve [\Delta,\chi(-x_1)]e^{i(\nu^2-i\ve)^{1/2} x_1-\nu x_2},
\ee
and  $[A,B]=AB-BA$ is a commutator.

Clearly via applying Cauchy-Schwartz inequality to \eqref{bbb1} and \eqref{bbb2}, we have the estimate:
\be   \|F\|_{W^0_{\beta,1}}\lesssim \|f\|_{W^0_{\beta,1}}\label{tttt}.
\ee
Thus we need to prove the inequality
\be
\|\tilde{u}_\ve\|_{\dot{H}^2} \lesssim  \|F\|_{W^0_{\beta,\gamma}}.
\label{Aoc}\ee
We are going to apply method of projections to \eqref{naa1}, \eqref{naa3}. This method, in the context of linear water waves, probably goes back to \cite{J}. To this end we represent $\tilde{u}_\ve$ as
\be \tilde{u}(x_1,x_2)=w_1(x_1)e^{-\nu x_2}+ w_2(x_1,x_2), \label{razl0}\ee
where
  \be \int_0^{\infty} w_2(x_1,x_2)e^{-\nu x_2}dx_2=0,\ \forall x_1\in \mathbb{R}.\label{razl}
\ee
Obviously, by the construction of $w_1$ as a projection of $\tilde{u}_\ve$, we have the estimates
\be \|e^{\beta^*\langle x_1 \rangle}w_1\|_{H^2(\mathbb{R})}\leq c \|\tilde{u}_\ve\|_{\mathbb{W}^2_{\beta^*,1}}, \ \ \|w_2\|_{W^2_{\beta^*,1}}\leq c \|\tilde{u}_\ve\|_{W^2_{\beta^*,1}}.
\ee
Similarly we represent $F$ as
\be F(x_1,x_2)=f_1(x_1)e^{-\nu x_2}+ f_2(x_1,x_2), \ \  \int_0^{\infty} f_2(x_1,x_2)e^{-\nu x_2}dx_2=0,\  x_1\in \mathbb{R},
\ee
with estimates
\be \|e^{\beta\langle x_1 \rangle}\langle x_1 \rangle f_1\|_{L_2(\mathbb{R})}\leq c \|F\|_{W^0_{\beta,1}}, \ \ \|f_2\|_{W^0_{\beta,1}}\leq c \|F\|_{W^0_{\beta,1}}.\label{ttttt}
\ee
Then we get by direct inspection decoupled problems for $w_1$ and $w_2$:
\be \partial_{x_1}^2 w_1(x_1) + (\nu^2-i\ve) w_1 (x_1)= f_1(x_1), \ \ x_1\in \mathbb{R},\label{hhh}
\ee
and
\be
    \Delta w_2-i\ve w_2 =f_2 ,\ \text{in}\  \mathbb{R}_+^2, \label{naa12}
\ee
\be
    \partial_n w_2-\nu w_2=0,\ \text{on}\    \Gamma. \label{naa13}
\ee
Below we demonstrate that both $w_1e^{-\nu x_2}$ and $w_2$ satisfy the estimate \eqref{Aoc}, but due to different reasons.

The  estimate for $w_2$ follows from the next lemma, which we prove under less restrictive conditions on $f$.
\begin{lemma}  Let $(x_2+1)f_2\in L_2(\mathbb{R}_+^2)$ and solution $w_2$ of the boundary value problem \eqref{naa12}, \eqref{naa13} satisfies \eqref{razl}. Then
\be  \|w_2\|^2_{\dot{H}^2(\mathbb{R}_+^2)}\leq c \frac{1+\nu^2}{\nu^2}\|(x_2+1)f_2\|^2_{ L_2(\mathbb{R}_+^2)},
\label{S1}\ee
where $c$ does not depend on $\ve$.
\end{lemma}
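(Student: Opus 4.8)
The plan is to reduce the two–dimensional problem \eqref{naa12}--\eqref{naa13} to a one–parameter family of ordinary differential equations by taking the Fourier transform in $x_1$. Writing $\hat w_2(\xi,x_2)$, $\hat f_2(\xi,x_2)$ for the transforms and $\mu=\mu(\xi,\ve)=(\xi^2+i\ve)^{1/2}$ with $\re\,\mu>0$, the equation becomes $\partial_{x_2}^2\hat w_2-\mu^2\hat w_2=\hat f_2$ on $(0,\infty)$, the boundary condition \eqref{naa13} becomes $\partial_{x_2}\hat w_2(\xi,0)=-\nu\,\hat w_2(\xi,0)$, and the orthogonality \eqref{razl} becomes $\int_0^\infty \hat w_2(\xi,x_2)e^{-\nu x_2}\,dx_2=0$ for every $\xi$. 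Since $\partial_{x_2}^2\hat w_2=(\xi^2+i\ve)\hat w_2+\hat f_2$, by Parseval the $\dot H^2$–norm in \eqref{S1} is controlled once I bound, uniformly in $\xi\in\mathbb{R}$ and in small $\ve$, the quantity $\int_0^\infty\bigl(\xi^4|\hat w_2|^2+\xi^2|\partial_{x_2}\hat w_2|^2+\langle x_2\rangle^{-2}|\hat w_2|^2\bigr)dx_2$ by $\int_0^\infty(1+x_2)^2|\hat f_2|^2\,dx_2$; the second–derivative term is then recovered directly from the equation.

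First I would solve the transformed boundary value problem explicitly. The decaying homogeneous solution is $e^{-\mu x_2}$, and constructing via the half–line Green's function for $\partial_{x_2}^2-\mu^2$ the solution that in addition satisfies the Robin condition at $x_2=0$ produces a term $C\,e^{-\mu x_2}$ whose coefficient is
\[
C=\frac{\mu+\nu}{2\mu(\nu-\mu)}\int_0^\infty e^{-\mu y}\hat f_2(\xi,y)\,dy .
\]
This exhibits the resonance: the factor $(\nu-\mu)^{-1}$ is singular exactly as $(\xi,\ve)\to(\pm\nu,0)$, so a naive estimate of $C$ blows up there. Controlling this term uniformly in $\ve$ is the \emph{heart of the matter} and the main obstacle.

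The crucial step is to use the hypothesis \eqref{razl}, which in Fourier variables reads $\int_0^\infty e^{-\nu y}\hat f_2\,dy=0$, so that the numerator may be rewritten as
\[
\int_0^\infty e^{-\mu y}\hat f_2(\xi,y)\,dy=\int_0^\infty\bigl(e^{-\mu y}-e^{-\nu y}\bigr)\hat f_2(\xi,y)\,dy .
\]
Since $|e^{-\mu y}-e^{-\nu y}|\le|\mu-\nu|\,y\,e^{-\rho y}$ with $\rho=\min(\nu,\re\,\mu)>0$, the numerator is $O(|\mu-\nu|)$; the spurious factor $|\mu-\nu|$ cancels the pole $(\nu-\mu)^{-1}$ and leaves $C$ bounded. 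The price is the extra power of $y$ under the integral, which is precisely why the weight $(1+x_2)$ on $f_2$ is indispensable (it enters through $\int_0^\infty y\,e^{-\rho y}|\hat f_2|\,dy\le c\,\rho^{-1/2}\|(1+y)\hat f_2\|_{L_2}$). Estimating the remaining, non–resonant part of the Green's function is routine.

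To finish I would split the frequency axis. On the resonant band where $\big||\xi|-\nu\big|$ is small the cancellation above gives the uniform bound, with $\rho\approx\nu$ bounded away from $0$; for large $|\xi|$ the factor $(\nu-\mu)^{-1}$ is harmless and the exponential decay of the kernel yields the estimate with room to spare. The remaining delicate region is $\xi$ near $0$, where $\mu\to0$ and the individual terms carry a factor $\mu^{-1}$: here I would combine the particular–solution term and $C\,e^{-\mu x_2}$ before estimating, so that the leading $\mu^{-1}$ singularities cancel (reflecting that the $\mu=0$ solution of $v''=\hat f_2$, $v'(0)=-\nu v(0)$, is bounded but need not decay). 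The weight $\langle x_2\rangle^{-2}$ in the definition of $\dot H^2$ is exactly what keeps $\int_0^\infty\langle x_2\rangle^{-2}|\hat w_2|^2\,dx_2$ finite for such a non–decaying profile. Assembling the three regions by Parseval and tracking the $\nu$–dependence through the factors $\mu^{-1}$, $(\mu+\nu)/\mu$ and $\rho^{-1}$ produces the stated constant $c\,(1+\nu^2)/\nu^2$, after which the bound on $\partial_{x_2}^2 w_2$ follows from the equation. I expect the resonant–band cancellation to be the decisive point, with the behaviour near $\xi=0$ the second technical hurdle.
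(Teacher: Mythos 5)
Your proof takes a genuinely different and more computational route than the paper's, but the strategy is sound. The paper's argument is purely variational and occupies only a few lines: it writes the energy identity $\int_{\mathbb{R}^2_+}|\nabla w_2|^2dx-\nu\int_\Gamma|w_2(x_1,0)|^2dx_1=-\re\int_{\mathbb{R}^2_+}f_2\overline{w_2}\,dx$ and exploits \eqref{razl} through the elementary inequality $2\nu|w_2(x_1,0)|^2\le\int_0^\infty|\partial_{x_2}w_2|^2dx_2$ (a consequence of $w_2(x_1,0)=-\int_0^\infty\partial_{x_2}w_2\,e^{-\nu x_2}dx_2$ and Cauchy--Schwarz), which absorbs the indefinite boundary term into half of the Dirichlet integral; a Hardy inequality in $x_2$ and a standard local elliptic estimate then close the argument, the $\nu$-dependence of the constant emerging from the choice of the small parameter in Young's inequality. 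Your Fourier-side argument uses the same hypothesis for the same ultimate purpose---suppressing the surface-wave resonance---but at the level of the explicit Green's function, where it appears as the cancellation of the pole $(\nu-\mu)^{-1}$ at $|\xi|=\nu$. This is correct and arguably more illuminating about where the danger sits, but two points need care. First, \eqref{razl} is a hypothesis on $w_2$, not on $f_2$: the identity $\int_0^\infty e^{-\nu y}\hat f_2(\xi,y)\,dy=0$ that you actually use must be derived by integrating the equation against $e^{-\nu x_2}$ and invoking the Robin condition (the boundary terms vanish because $e^{-\nu x_2}$ itself satisfies the homogeneous Robin condition); it is true, but it is a step, not a restatement. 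Second, on the resonant band the bound $\int_0^\infty ye^{-\rho y}|\hat f_2|\,dy\le c\rho^{-1/2}\|(1+y)\hat f_2\|_{L_2}$ as written yields a constant of order $\nu^2$ for large $\nu$; you need the sharper value of the same Cauchy--Schwarz constant, namely $c\min(\rho^{-1/2},\rho^{-3/2})\|(1+y)\hat f_2\|_{L_2}$, to land on the stated $(1+\nu^2)/\nu^2$. What your route buys is an explicit solution formula and a transparent picture of the resonance; what it costs is the three-region frequency analysis and the bookkeeping of $\mu$, $\rho$ and $\nu$, none of which the variational proof requires.
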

\begin{proof} Let us write down the ``energy" identity for the problem \eqref{naa12},  \eqref{naa13}:
\be     \int_{\mathbb{R}^2_+} |\nabla w_2|^2dx-\nu \int_{-\infty}^{+\infty}|w_2(x_1,0)|^2dx_1=-\mbox{Re} \int_{\mathbb{R}^2_+} f_2 \overline{w_2}dx.\label{energy}
\ee
Since $w_2$ satisfies \eqref{razl} we have:
\be 2\nu|w_2(x_1,0)|^2\leq \int^{+\infty}_0|\partial_{x_2} w_2(x_1,x_2)|^2dx_2. \label{vps}
\ee
Now we deduce from \eqref{vps} and \eqref{energy}
\be      \int_{\mathbb{R}^2_+} |\nabla w_2|^2dx\leq-2\mbox{Re} \int_{\mathbb{R}^2_+} f_2 \overline{w_2}dx.  \label{ocenergy}
\ee
In order to estimate  right hand side of \eqref{ocenergy}  we employ the Hardy type inequality, cf e.g. \cite{Hardy},
\be
\label{H1} \int_0^{+\infty} (x_2+1)^{-2} |v|^2 dx_2\leq c\left(\int_0^{+\infty}  |\partial_{x_2}v|^2dx_2  +|v(0)|^2 \right) . \ee
It follows from \eqref{ocenergy}  \eqref{H1} and \eqref{vps} that, for any $\delta>0$,
\be \int_{\mathbb{R}^2_+} |\nabla w_2|^2dx\leq\delta ^{-1} \int_{\mathbb{R}^2_+}(x_2+1)^{2}  |f|^2 dx+\delta \int_{\mathbb{R}^2_+} (x_2+1)^{-2} |w_2|^2dx\leq   \label{ocenergy2}
\ee
$$\delta ^{-1} \int_{\mathbb{R}^2_+}(x_2+1)^{2}  |f|^2 dx+\delta c\int_{\mathbb{R}^2_+}  |\partial_{x_2}w_2|^2dx_2  +\delta c \int_{-\infty}^{+\infty}|w_2(x_1,0)|^2 dx_1\leq
$$
$$\delta ^{-1} \int_{\mathbb{R}^2_+}(x_2+1)^{2}  |f|^2 dx+\delta c\int_{\mathbb{R}^2_+}  |\partial_{x_2}w_2|^2dx_2  +\frac{\delta c}{2\nu}\int_{\mathbb{R}^2_+}|\partial_{x_2}w_2|^2 dx.
$$
This implies,
\be \int_{\mathbb{R}^2_+} |\nabla w_2|^2dx\leq c\int_{\mathbb{R}^2_+}(x_2+1)^{2}  |f_2|^2 dx \label{ocenergy3},
\ee
\be \int_{-\infty}^{+\infty}|w_2(x_1,0)|^2dx_1\leq c \int_{\mathbb{R}^2_+}(x_2+1)^{2}  |f_2|^2 dx \label{ocenergy5},
\ee
\be \int_{\mathbb{R}^2_+} (x_2+1)^{-2} |w_2|^2dx\leq c\int_{\mathbb{R}^2_+}(x_2+1)^{2}  |f_2|^2 dx \label{ocenergy6}.
\ee
>From the boundary value problem \eqref{naa12}, \eqref{naa13} we further have, using standard elliptic estimates and suitable cut-off functions,
\be \int_{\mathbb{R}^2_+}  |\nabla^2 w_2|^2dx\leq c\int_{\mathbb{R}^2_+}(x_2+1)^{2}  |f_2|^2 dx. \label{ocenergy7}
\ee
The estimates \eqref{ocenergy3}, \eqref{ocenergy6} and \eqref{ocenergy7} imply \eqref{S1}.
\end{proof}
The estimate for $w_1e^{-\nu x_2}$ is ensured by the   following lemma, which we formulate in a self-contained form.
\begin{lemma}  Consider one-dimensional Schr\"{o}dinger equation, cf \eqref{hhh},
\be \partial_{t}^2 u(t) + (\nu^2-i\ve) u (t)= f(t), \ \ t\in \mathbb{R},\label{hhhh}
\ee
with  absorption $\ve\in\mathbb{R}$,  $|\ve|<1$, and rapidly decaying right hand side $f$.
More precisely we assume that  $e^{\delta\langle t \rangle}f \in L_2(\mathbb{R})$ for some $\delta >\frac{|\ve|}{2\nu}$, $\langle t\rangle=(t^2+1)^{1/2}$. If additionally
$e^{\delta\langle t \rangle}u\in H^2(\mathbb{R})$   then, with constant $c$ independent of $\nu$ and $\ve$,
 \be \frac{\nu}{1+\nu^2}\|u_{tt}\|_{L_2}+\|u_t\|_{L_2}+\nu \|u\|_{L_2}\leq c \left(\int_{-\infty}^{+\infty} |f|^2\langle t \rangle^2e^{\frac{|\ve|}{\nu}\langle t \rangle}dt\right)^{1/2}.\label{S2}
\ee
\end{lemma}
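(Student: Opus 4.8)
The plan is to solve the constant--coefficient equation \eqref{hhhh} explicitly by variation of parameters and to extract from the integral representation bounds that are uniform in $\ve$. Write $k_0=\sqrt{\nu^2-i\ve}$ for the principal root and set $\mu:=|\im k_0|$; one checks $\mu\le|\ve|/(2\nu)<\delta$ by hypothesis, and the case $\ve\le0$ follows from the case $\ve\ge0$ by complex conjugation. Label the homogeneous solutions $\phi_\pm(t)=e^{\pm ik_0t}$ so that $|\phi_\pm(t)|=e^{\pm\mu t}$; then $\phi_-$ decays as $t\to+\infty$ and $\phi_+$ as $t\to-\infty$, and their Wronskian is $W=-2ik_0$. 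The solution decaying at both ends is $u(t)=W^{-1}\big[\phi_+(t)\int_t^{\infty}\phi_-f\,ds+\phi_-(t)\int_{-\infty}^{t}\phi_+f\,ds\big]$, all integrals converging because $e^{\delta\langle\cdot\rangle}f\in L_2$ with $\delta>\mu$.

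The decisive use of the hypothesis $e^{\delta\langle t\rangle}u\in H^2$ is as follows. As $t\to+\infty$ the representation gives $u(t)\sim W^{-1}\phi_-(t)\int_{\mathbb R}\phi_+f$, which decays only like $e^{-\mu t}$ unless the amplitude $\int_{\mathbb R}\phi_+f$ vanishes; since $e^{\delta\langle t\rangle}u\in H^2$ with $\delta>\mu$, this forces $\int_{\mathbb R}\phi_+f=0$, and symmetrically $\int_{\mathbb R}\phi_-f=0$. I would use these two identities to replace $\int_{-\infty}^t$ by $-\int_t^{\infty}$, rewriting $u(t)=W^{-1}\big[\phi_+(t)\int_t^\infty\phi_-f\,ds-\phi_-(t)\int_t^\infty\phi_+f\,ds\big]$, and (differentiating, where the boundary terms cancel) $u'(t)=W^{-1}\big[ik_0\phi_+(t)\int_t^\infty\phi_-f\,ds+ik_0\phi_-(t)\int_t^\infty\phi_+f\,ds\big]$, so that both $u$ and $u'$ are expressed purely through the tail integrals $\int_t^\infty\phi_\pm f$. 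The prefactor for $u$ is $|W|^{-1}=(2|k_0|)^{-1}$, while for $u'$ it is $|k_0|\,|W|^{-1}=\tfrac12$; since $|k_0|^2=\sqrt{\nu^4+\ve^2}\ge\nu^2$, the factor $\nu/|k_0|\le1$ is exactly what will convert $\|u\|$ into the weighted quantity $\nu\|u\|$.

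The heart of the matter, and the main obstacle, is to bound the resonant tail $t\mapsto e^{-\mu t}\int_t^\infty e^{\mu s}|f(s)|\,ds$ (the other tail $e^{\mu t}\int_t^\infty e^{-\mu s}|f|\,ds=\int_t^\infty e^{-\mu(s-t)}|f|\,ds$ is harmless) in $L_2$ \emph{uniformly} in $\ve$: the naive estimates via Young's or Cauchy--Schwarz inequalities produce factors $\ve^{-1}$ or $\log(1/\ve)$ and are useless. I would instead invoke the sharp weighted Hardy inequality, whose Muckenhoupt constant here is $\sup_{r>0}\big(\int_0^r e^{-2\mu t}dt\big)\big(\int_r^\infty\langle s\rangle^{-2}ds\big)$; because $\int_0^r e^{-2\mu t}dt\le\min\{r,(2\mu)^{-1}\}$ and $\int_r^\infty\langle s\rangle^{-2}ds\lesssim(1+r)^{-1}$, this supremum stays bounded uniformly in $\mu\in[0,1]$ (the transition occurs at $r\sim1/\mu$, where the two factors balance). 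This yields $\big\|e^{-\mu t}\int_t^\infty e^{\mu s}|f|ds\big\|_{L_2}\lesssim\big(\int\langle s\rangle^2 e^{2\mu\langle s\rangle}|f|^2ds\big)^{1/2}$, which together with the prefactor analysis gives $\nu\|u\|_{L_2}+\|u'\|_{L_2}\lesssim\big(\int|f|^2\langle t\rangle^2 e^{|\ve|\langle t\rangle/\nu}dt\big)^{1/2}$, using $2\mu\le|\ve|/\nu$. Finally the second--derivative term needs no new work: from \eqref{hhhh}, $\|u''\|_{L_2}\le\|f\|_{L_2}+|\nu^2-i\ve|\,\|u\|_{L_2}$, and since $|\nu^2-i\ve|\le1+\nu^2$ (as $|\ve|<1$) one obtains $\tfrac{\nu}{1+\nu^2}\|u''\|_{L_2}\lesssim\|f\|_{L_2}+\nu\|u\|_{L_2}$, which is already controlled --- this is precisely why the coefficient $\nu/(1+\nu^2)$ appears in \eqref{S2}.
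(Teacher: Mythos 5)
Your argument is correct in substance but follows a genuinely different route from the paper's. The paper never solves \eqref{hhhh} explicitly: it multiplies the equation by the weighted multiplier $t e^{\alpha|t|}\overline{u_t}$ with $\alpha=|\ve|/\nu$ and integrates by parts; that particular weight is chosen so that the dangerous absorption term $2\mathrm{Re}\bigl[i\ve\int t e^{\alpha|t|}u\overline{u_t}\,dt\bigr]$ combines with the derivative of the weight into the manifestly nonnegative quantity $\int \frac{\alpha}{|t|}e^{\alpha|t|}\bigl|t u_t+i(\ve/\alpha)|t|\,u\bigr|^2dt$, after which $\|u_t\|_{L_2}$ and $\nu\|u\|_{L_2}$ fall out of a single identity and $u_{tt}$ is recovered from the equation exactly as you do. Your variation-of-parameters route is more revealing: it isolates the mechanism (resonance at the decay rate $\mu=|\mathrm{Im}\sqrt{\nu^2-i\ve}|\le|\ve|/(2\nu)$, and the two orthogonality conditions $\int e^{\pm i k_0 s}f\,ds=0$ forced by the hypothesis $e^{\delta\langle t\rangle}u\in H^2$ with $\delta>\mu$), and it traces the uniformity in $\ve$ to a clean source, namely the uniform boundedness of the Muckenhoupt constant in the two-weight Hardy inequality; the price is reliance on that sharp Hardy theorem, where the paper's identity is elementary and self-contained. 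One point you must make explicit: after replacing both integrals by $\int_t^\infty$, your estimates are uniform only on the half-line $t>0$ --- indeed $e^{-\mu t}\int_t^\infty e^{\mu s}|f|\,ds$ grows exponentially as $t\to-\infty$ when $\mu>0$, and $\int_t^\infty e^{-\mu(s-t)}|f|\,ds$ tends to the nonzero constant $\|f\|_{L_1}$ there when $\mu=0$, so neither quantity is in $L_2(\mathbb{R})$ as your phrase ``the other tail is harmless'' suggests. For $t<0$ you must retain the $\int_{-\infty}^t$ form (using the second orthogonality condition) and apply the mirror-image Hardy inequality; this is legitimate by the $t\mapsto-t$ symmetry, and with that repair the proof is complete with constants independent of $\nu$ and $\ve$.
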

\begin{proof} Let us scalar multiply \eqref{hhhh} by $te^{\alpha| t |}u_t$, where $\alpha=\frac{|\ve|}{\nu}$, and integrate by parts. We have
$$ -2\mbox{Re}\int_{-\infty}^{+\infty} f(t)te^{\alpha| t |}\overline{u_t}dt=$$
$$-2\mbox{Re}\int_{-\infty}^{+\infty}\partial_{t}^2 ute^{\alpha|t|}\overline{u_t}dt - 2\mbox{Re}\left[(\nu^2-i\ve)\int_{-\infty}^{+\infty} u te^{\alpha|t|}\overline{u_t}dt\right]
$$
$$=\int_{-\infty}^{+\infty} \left(te^{\alpha|t|}\right)_t|u_t|^2dt + \nu^2\int_{-\infty}^{+\infty} \left(te^{\alpha|t|}\right)_t|u |^2tdt+2\mbox{Re}\left[i\ve\int_{-\infty}^{+\infty} u te^{\alpha|t|}\overline{u_t}dt\right]=
$$
$$\int_{-\infty}^{+\infty}e^{\alpha|t|}\left(1  +\alpha |t|\right)|u_t|^2dt + \nu^2\int_{-\infty}^{+\infty} e^{\alpha|t|}\left(1  +\alpha |t|\right)|u |^2dt+2\mbox{Re}\left[i\ve\int_{-\infty}^{+\infty}  te^{\alpha|t|}u\overline{u_t}dt\right]
$$
$$=\int_{-\infty}^{+\infty}e^{\alpha|t|} |u_t|^2dt + \int_{-\infty}^{+\infty}\frac{\alpha }{| t |}e^{\alpha|t|} \left|t u_t+i\frac{\ve}{\alpha}| t | u\right|^2dt
+ \nu^2\int_{-\infty}^{+\infty}e^{\alpha|t|}\left|u\right|^2dt.
$$
From the above we conclude
\be \int_{-\infty}^{+\infty}e^{\alpha|t|} |u_t|^2dt \leq 4 \int_{-\infty}^{+\infty} |f|^2t^2e^{\alpha|t|}dt,\label{ocyy}
\ee
and
\be \nu^2 \int_{-\infty}^{+\infty}e^{\alpha|t|} \left|u\right|^2dt\leq  \int_{-\infty}^{+\infty} |f|^2t^2e^{\alpha|t|}dt.\label{ocyy3}
\ee
Second derivatives can be estimated directly via
 equation \eqref{hhhh} and  we obtain
\be \|u_{tt}e^{\frac{\alpha}{2}|t|}\|_{L_2(\mathbb{R})}\leq \frac{\nu^2+|\ve|}{\nu}\|tfe^{\frac{\alpha}{2}|t|}\|_{L_2(\mathbb{R})}+\|fe^{\frac{\alpha}{2}|t|}\|_{L_2(\mathbb{R})}.
\label{ocyyy2}\ee

Combining \eqref{ocyy}, \eqref{ocyy3}  and \eqref{ocyyy2} we finally obtain  \eqref{S2}.
\end{proof}
Finally we obtain the estimate for $\tilde{u}_\ve$. From \eqref{razl0},\eqref{S2},\eqref{S1} and \eqref{ttttt}  we have
\be  \|\tilde{u}_\ve\|_{\dot{H}^2}\leq  \|w_1e^{-\nu x_2}\|_{\dot{H}^2}+\|w_2\|_{\dot{H}^2}\leq c\|f\langle x_1 \rangle e^{\frac{\ve}{2\nu}\langle x_1 \rangle}\|_{L_2}+c\|\langle x_2 \rangle f_2\|_{ L_2(\mathbb{R}_+^2)}\lesssim \|F\|_{W^0_{\beta,1}}.
\ee
Now estimate \eqref{tttt} delivers the desired result \eqref{ocep3}.


\end{document}